\documentclass[runningheads,12pt]{llncs}
\usepackage{graphicx}
\usepackage{mathptmx}      % use Times fonts if available on your TeX system
%
% insert here the call for the packages your document requires
%\usepackage{latexsym}
%\usepackage{authblk}
\usepackage{authblk}
\usepackage{enumitem}
\usepackage{latexsym}
\usepackage{amsmath}
\usepackage{amssymb}
\usepackage{amscd}
\usepackage[all]{xy}
\usepackage{mathrsfs}
\usepackage{bm}
\usepackage{cite}
\usepackage{hyperref}
\hypersetup{% hyperrefオプションリスト
setpagesize=false,
 bookmarksnumbered=true,%
 bookmarksopen=true,%
 colorlinks=true,%
 linkcolor=black,
 citecolor=red,
}
%\usepackage{color}
%\definecolor{dblue}{rgb}{0,0,0.45}
%\definecolor{red}{rgb}{0.7,0,0}
% etc.
%
% please place your own definitions here and don't use \def but
% \newcommand{}{}
 
\DeclareMathOperator*{\essinf}{ess\,inf}
% Insert the name of "your journal" with
% \journalname{myjournal}
%
%\spdefaulttheorem{theorem}{Theorem}{\bf}{\rm}

\begin{document}

\title{Composition operators on reproducing kernel Hilbert spaces with  analytic positive definite functions%\thanks{Grants or other notes
%about the article that should go on the front page should be
%placed here. General acknowledgments should be placed at the end of the article.}
}
%\subtitle{Do you have a subtitle?\\ If so, write it here}

\titlerunning{Composition operators on RKHS with analytic positive definite functions}        % if too long for running head

\author{
Masahiro Ikeda\inst{1,2}  \footnote{\email{masahiro.ikeda@riken.jp}} 
\and Isao Ishikawa\inst{1,3} \footnote{\email{ishikawa.isao.zx@ehime-u.ac.jp}} 
\and  Yoshihiro Sawano\inst{1,4} \footnote{\email{yoshihiro-sawano@celery.ocn.ne.jp}}%etc.
}

\authorrunning{ }%M. Ikeda, I. Ishikawa, Y. Sawano} % if too long for running head

\institute{    Center for Advanced Intelligence Project, RIKEN, Tokyo, Japan \\
            \and 
            School of Fundamental Science and Technology, Keio University, Kanagawa, Japan\\
            \and
              Center for Data Science, Ehime University, Ehime, Japan\\
            \and
              Department of Mathematics, Chuo University, Tokyo, Japan\\
}

%\date{Received: date / Accepted: date}
% The correct dates will be entered by the editor

\maketitle

\begin{abstract}
In this paper, we specify
what functions induce the bounded composition operators 
on a reproducing kernel Hilbert space (RKHS) associated with an analytic positive definite function defined on $\mathbb{R}^d$.
We prove that only affine transforms can do so in a certain large class of RKHS.
Our result covers not only the Paley-Wiener space on the real line, studied in previous works, but also much more general RKHSs corresponding to analytic positive definite functions, where existing methods do not work. 
%Indeed, our result works even in the case where the positive definite functions are entire of infinite order where existing work does not work.
Our method only relies on intrinsic properties of the RKHSs, and we establish a connection between the behavior of composition operators and asymptotic properties of the greatest zeros of orthogonal polynomials on a weighted $L^2$-space on the real line.
We also investigate the compactness of the composition operators and show that any bounded composition operators cannot be compact in our situation.
\end{abstract}

\section{Introduction}
In this paper, we establish that the composition operator generated by a map in the Euclidean space ${\mathbb R}^d$ enjoys the boundedness property in a reproducing kernel Hilbert space (RKHS for short) only if the map is affine 
when the reproducing kernel is an analytic positive definite function with some conditions.
In addition, we will characterize affine maps which induce bounded composition operators.
We summarize several basic notation at the end of this section.
%We are interested in the boundedness property in itself. Recently, composition operators have been used in many branches of science, such as engineering and machine learning, and their properties, like bounded-ness, are sometimes crucial to give theoretical guarantees for applications.  However, it seems that some of the important propositions are missing rigorous mathematical proofs.

Recall the definition of composition operators. Let $f:E\rightarrow E'$ be a map
from a set $E$ to a set $E'$, 
and let $V$ and $W$ be function spaces on $E$ and $E'$, respectively. 
The {\em composition operator} $C_f:f \mapsto h \circ f$ is the linear operator
$(C_f,\mathcal{D}(C_f))$
from $W$ to $V$ 
whose domain is $\mathcal{D}(C_f)=\{h \in W : h\circ f\in V\}$.
Since a composition operator is always a closed operator,
it is worth remarking that the preserving property, that is, 
$\mathcal{D}(C_f)=W$ implies the boundedness of $C_f$ if $V$ and $W$ are sufficiently good topological linear spaces, for example, Banach spaces.

We also recall the notion of RKHSs. 
A function $k$ defined on the cross product of $E \times E$, 
where $E$ is a set, is said to be positive definite if for any arbitrary function $X:E \to {\mathbb C}$ and for any finite subset $F$ of $E$, 
$ \sum\limits_{p,q \in F} \overline{X(p)} X(q) k(p,q) \geq 0$. 
%$ \sum\limits\limits_{p,q \in F} \overline{X(p)} X(q) k(p,q) \geq 0$. 
A fundamental theorem in the theory of RKHSs is that such a function $k$ generates a unique reproducing kernel Hilbert space $H_k$. See \cite{SS}, for example.

Now, let us state our main result. We will adopt the 
following
definition of the {\it Fourier transform}:
%and its {\it inverse Fourier transform}{\rm:}
\begin{eqnarray*}
\widehat{h}(\xi)=\mathcal{F}[h](\xi)
:=
\int_{{\mathbb R}^d} h(x){\rm e}^{-2\pi{\rm i} x \cdot \xi} 
{\rm d}x. \quad
%\mathcal{F}^{-1}[h](x)
%:=
%\int_{{\mathbb R}^d} h(\xi){\rm e}^{2\pi{\rm i} x \cdot \xi} 
%{\rm d}\xi.
\end{eqnarray*}
Let $0\neq w \in L^1\cap L^{\infty} (\subset L^2)$ and assume $w \ge 0$ almost everywhere.
Then, $\widehat{w}$ is a {\em positive definite function}, namely, 
\[k(x,y) :=\widehat{w}(x-y)\]
becomes a positive definite kernel, and thus $k$ determine a RKHS $H_k$ (see Section \ref{s2} for more details).
The RKHS $H_k$ is realized in the space of continuous and square integrable functions on $\mathbb{R}^d$
whose Fourier transform vanishes almost everywhere
on $\{w=0\}$, and its norm is given by
%Let $u\in C \cap L^2 \setminus \{0\}$ be such that $\widehat{u}\in L^1 \cap L^\infty$ and that $\widehat{u}$ is non-negative almost everywhere. By Bochner's theorem, $k(x,y)=u(x-y)$ is a positive definite kernel. Thus, $k$ generates an RKHS. Recall the definition and its properties from Section \ref{s2}. Observe that $H_k \subset L^2$ and that the norm is given by
\[
\|f\|_{H_k}
=
\sqrt{
\int_{{\mathbb R}^d} |\widehat{f}(\xi)|^2 w(\xi)^{-1} d\xi
}
=\big\| \widehat{f} \big\|_{L^2(w^{-1})}
\quad (f \in H_k).
\]    
%where, we define $\displaystyle\|h\|_{L^2(w)}:=\left(\int_{{\mathbb R}^d}|h(\xi)|^2 w(\xi){\rm d}\xi\right)^{\frac12}$.
%Here, the space $L^2(\widehat{u})$ denotes the set of all measurable functions $h$ vanishing almost everywhere on $\{w=\infty\}$ and $\|h\|_{L^2(\widehat{u})}<\infty$.

It is
worth 
considering the case of $d=1$ and $w = \mathbf{1}_{[-1/2, 1/2]}$.
In this case, the RKHS constructed by $w$ is called the Paley-Wiener space.
This space is composed of functions whose fourier transforms are supported on $[-1/2, 1/2]$.
As {\em warping operators}, 
many researchers have been studying
the condition for which the $\mathcal{D}(C_f)$ is equal to $H_k$, which is equivalent to the boundedness of $C_f$, 
in the field of signal processing, a brunch of engineering \cite{CC90, CC93, Azi02, ACCM, XZ92, CCG}.
In \cite{Lev12}, a general dimensional case is treated although their setting slightly differs 
from ours.
We also note that composition operators have recently attracted researchers in a field of data science and machine learning\cite{HIIK, IFIHK, Kawahara}, and mathematical properties of composition operators are quite important to provide their theoretical guarantee.

For $z \in \mathbb{C}^d$, we denote by $m_z$ the pointwise multiplication operator on $L^2(w)$: 
%\[m_z(h)(\xi):= {\rm e}^{-2\pi i z^\top \xi}h(\xi).\]
\[m_z[h](\xi):= {\rm e}^{z^\top \xi}h(\xi),\]
where $\top$ stands for the transpose of vectors or matrices.
For each $n \in {\mathbb N}$, the space $P_n\subset\mathbb{C}[\xi_1,\xi_2,\ldots,\xi_d]$ stands for the linear space of all polynomials of (total) degree at most $n$. 
In the case of  $|\xi|^{2n} w(\xi) \in L^1$, we regard $P_n \subset L^2(w)$, and for any $y \in \mathbb{R}^d$, we define
\begin{align*}
     \mathcal{E}_n^+(y; w) &:= \sqrt[n]{ \sup\limits_{P \in P_n \setminus \{0\}} \frac{\|m_y P\|_{L^2(w)}}{\|P\|_{L^2(w)}}}  = \| m_y|_{P_n} \|^{1/n},\\
     \mathcal{E}_n^-(y; w) &:= \sqrt[n]{ \sup\limits_{P \in P_n \setminus \{0\}} \frac{\|P\|_{L^2(w)}}{\|m_y P\|_{L^2(w)}}}  = \| m_y|^{-1}_{P_n} \|^{1/n}.\\
\end{align*}
We define
\[
{\mathcal G}(w)
:=
\left\{A \in {\rm GL}_d({\mathbb R})\,:\,
w(A^\top \xi) \ge \lambda w(\xi) 
\mbox{ a.e. $\xi$ for some $\lambda>0$}
\right\}.
\]
We will check 
that $\mathcal{G}(w)$ coincides with the linear maps inducing a bounded composition operator on $H_k$ (see Proposition \ref{prop: boundedness for affine maps}).

Under certain assumptions on $w$, the boundedness of composition operators force the original maps to be affine as our main result below shows: 

\begin{theorem}
\label{main thm}
Let $w\in L^1 \cap L^\infty \setminus \{0\}$ be non-negative almost everywhere, and let $$k(x,y):=\widehat{w}(x-y).$$ 
We impose the following three assumptions on $w$:
\begin{enumerate}[label={\rm (\Alph*)}]
\item for any $a>0$, there exists $c_a>0$ such that $w(\xi)\le c_a{\rm e}^{-a|\xi|}$
for almost all $\xi \in {\mathbb R}^n$, 
\label{main thm assumption 1}
%\item $\displaystyle \sup\limits_{y \in \mathbb{R}^d_{\ge 0}}\left(\liminf_{n \to \infty}\sqrt[n]{ \sup\limits_{P \in P_n \setminus \{0\}}\frac{\|m_y P\|_{L^2(w)}}{\|P\|_{L^2(w)}}}\right)<\infty$, 
\item there exists $B>0$ such that for any $y\in \mathbb{R}^d$,
\[ \limsup_{n\rightarrow \infty} \mathcal{E}_n^-(y; w),~ \limsup_{n\rightarrow\infty} \mathcal{E}^+_n(y; w) < B, \]
\label{main thm assumption 2}
\item $\mathcal{G}(w)$ spans ${\rm M}_d(\mathbb{R})$, that is, $\langle \mathcal{G}(w)\rangle_{\mathbb{R}} = {\rm M}_d(\mathbb{R})$.
\label{main thm assumption 4}
\end{enumerate}
%Notice that the function $k(x,y)=u(x-y)$ is analytic positive definite thanks to condition $\ref{main thm assumption 1}$.
Then, for any open set $U\subset\mathbb{R}^d$ and any map $f:U\rightarrow\mathbb{R}^d$, the map $f$ is a restriction of an affine map of the form, 
\[f(x)=Ax+b\] 
with $A\in\mathcal{G}(w)$ and $b\in\mathbb{R}^d$ if and only 
if $\mathcal{D}(C_f)=H_k$, and thus the composition operator $C_f:H_k\rightarrow H_{k|_{U^2}}$ is bounded. \label{thm:191007-1}
\end{theorem}
We will prove Theorem \ref{main thm} as a corollary of Theorems \ref{thm: affineness} and \ref{thm: analytic continuation}.
Our method is based on an intrinsic structure of $L^2(w)$, and thus can treat a quite general class of $w$ on higher dimensional Euclidean space $\mathbb{R}^d$ ($d \ge 1$).
For some special
cases, we have a more concrete result as follows:
\begin{theorem}
\label{main thm 2}
Let $w \in L^1 \cap L^\infty \setminus \{0\}$ be 
a nonnegative spherical function.
Assume that there exists
a locally $L^1$-function $Q:[0,\infty) \rightarrow \mathbb{R}$ such that $w(\xi) = {\rm e}^{-Q(|\xi|)}$.
We further assume that there exists $c \ge 0$ such that  $Q(t) + ct$ is non-decreasing for sufficiently large $t \ge 0 $ and
that $Q(t+R)-Q(t) \rightarrow \infty$ as $t\rightarrow \infty$ for some $R>0$.
Then, the function $w$ satisfies Assumptions
$\rm \ref{main thm assumption 1}$,
$\rm \ref{main thm assumption 2}$, and 
$\rm \ref{main thm assumption 4}$.
Thus, 
if a map induces
bounded composition operators on the RKHS $H_k$ associated to $k(x,y)=\widehat{w}(x-y)$,
then the original map is affine.
\end{theorem}
For example, $w(\xi) = |\xi|^{-|\xi|}$, $\Gamma(|\xi|)^{-1}$, $|\xi|^\alpha {\rm e}^{-|\xi|\beta(|\xi|)}$ , where $\alpha \ge 0$ and $\beta: [0,\infty)\rightarrow \mathbb{R}$ is an non-decreasing function going to $\infty$, satisfy the conditions 
in Theorem \ref{main thm 2}.
We actually obtain a more general result (Theorem \ref{thm: criterion for spherical pdf}), and Theorem \ref{main thm 2} above is a corollary 
(see Corollary \ref{cor: super easy criterion}) of Theorem \ref{thm: criterion for spherical pdf}.
We also obtain a similar result in the case where $w$ is a tensor product of even functions on $\mathbb{R}$ (see Section \ref{sec: tensor prod} for details).

Our results cover the previous works 
\cite{CC93, CCG}, namely,
 we see that Assumptions (A)--(C)
 in Theorem \ref{main thm} hold 
if $d=1$ and $w$ is compactly supported, 
since $m_z$ is a bounded linear isomorphism on $L^2(w)$ and $1 \in \mathcal{G}(w)$.
Needless to say,
 $\mathbf{1}_{[-\pi,\pi]}$ satisfies
 Assumptions (A)--(C) in Theorem \ref{main thm}.
 Thus, only affine maps can induce bounded linear operators on 
the Paley--Wiener space in this case.
 
Furthermore, our result also provides a non-trivial improvement even in a one-dimensional case.
In fact, in \cite{CCG, CC93}, their proof is based on finiteness of the order of the entire function $\widehat{w}$.
%P\'olya's theorem \cite{Po26}, which states if $f$ and $g$ are entire functions and $f\circ g$ is of finite order, then $f$ is of finite order and $g$ is a polynomial, or $f$ is of order $0$ and $g$ is not polynomial.
In their situation, the RKHS is composed of entire functions of order at most $1$, and they directly use P\'olya's theorem \cite{Po26} with some careful analysis, in other words, finiteness of the order of $\widehat{w}$ is crucial in their proof.
In contrast, our method does not need finiteness of the order, and by means of Theorem \ref{main thm 2}, we actually find an example of RKHS containing entire functions of infinite order, but only affine maps can induce bounded composition operators (see Section \ref{subsec: remark} for details) as in the following corollary:
\begin{corollary}
Assume $d=1$. Let
\[w(\xi) = \sum\limits_{n=-\infty}^\infty \frac{\mathbf{1}_{[-1/2+n, 1/2 +n)}(\xi)}{|n|!}.\]
Then, if a map induces
bounded composition operators on the RKHS $H_k$ associated to $k(x,y)=\widehat{w}(x-y)$,
then the original map is affine, but the RKHS $H_k$ contains entire functions of infinite order.
\end{corollary}
In addition, in previous works, they always impose some good properties, such as entireness, on the original map.
On the other hand, we do not need to assume the map $f$ is entire in advance as we prove the boundedness of $C_f$ automatically induces the holomorphy and continuity of $f$ (Theorem \ref{thm: analytic continuation}).

The compactness of composition operators is
an
important problem as well.
However, unfortunately,
compactness fails as our next theorem shows.
\begin{corollary}\label{cor:191118-1}
Using the same notation as Theorem \ref{main thm}, let $w$ satisfy 
Assumptions $\rm \ref{main thm assumption 1}$--$\rm \ref{main thm assumption 4}$. Then, no composition operators $C_f$ {\em can} be compact.
\end{corollary}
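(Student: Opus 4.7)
The plan is to argue by contradiction. Assume $C_f$ is compact; it is then bounded, so Theorem~\ref{main thm} gives $f(x) = Ax + b$ with $A \in \mathcal{G}(u)$ and $b \in \mathbb{R}^d$. The task becomes to exhibit a weakly null sequence in $H_k$ whose $C_f$-image has norm bounded away from zero in $H_{k|_{U^2}}$.

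The first step will reduce matters to the case $U = \mathbb{R}^d$. Condition \ref{main thm assumption 1} makes $\widehat{u}(\xi)^{-1}$ grow faster than any exponential, so finiteness of $\int |\widehat{h}|^2/\widehat{u}\,d\xi$ forces $\widehat{h}$ to decay super-exponentially. A Paley--Wiener-type bound then shows that every $h \in H_k$ extends to an entire function on $\mathbb{C}^d$, and by the identity theorem the restriction map $R_U\colon H_k \to H_{k|_{U^2}}$ is injective. Combined with the standard minimum-norm description of the restriction RKHS, this promotes $R_U$ to an isometric isomorphism. Writing $\tilde f(x) = Ax + b$ on all of $\mathbb{R}^d$, the main theorem provides $C_{\tilde f}\colon H_k \to H_k$ bounded, and the factorization $C_f = R_U \circ C_{\tilde f}$ identifies compactness of $C_f$ with compactness of $C_{\tilde f}$.

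Next, I will observe that every $h \in H_k$ lies in $C_0(\mathbb{R}^d)$: Cauchy--Schwarz gives $\|\widehat{h}\|_{L^1} \leq \|h\|_{H_k}\sqrt{u(0)}$, so $\widehat{h} \in L^1$ and Riemann--Lebesgue applies. Choose $y_n \in \mathbb{R}^d$ with $|y_n|\to\infty$; since $A$ is invertible, $|\tilde f(y_n)|\to\infty$ as well. Set $g_n := k(\cdot, \tilde f(y_n))/\sqrt{u(0)}$, so that $\|g_n\|_{H_k}=1$ and $\langle h, g_n\rangle_{H_k} = h(\tilde f(y_n))/\sqrt{u(0)} \to 0$ for every $h \in H_k$, i.e.\ $g_n \to 0$ weakly in $H_k$. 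A direct evaluation gives $(C_{\tilde f}g_n)(y_n) = u(0)/\sqrt{u(0)} = \sqrt{u(0)}$, and the reproducing property combined with Cauchy--Schwarz in $H_k$ then yields $\|C_{\tilde f}g_n\|_{H_k} \geq 1$ for all $n$. This contradicts compactness of $C_{\tilde f}$, which would force $C_{\tilde f}g_n \to 0$ in norm.

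The main obstacle I anticipate is the reduction in the second paragraph, specifically verifying that the restriction norm on $H_{k|_{U^2}}$ coincides with the $H_k$-norm of the unique entire extension so that $R_U$ is genuinely isometric. This is where analyticity provided by condition \ref{main thm assumption 1} is essential, since for bounded $U$ one cannot directly evaluate at points escaping to infinity; without the reduction the reproducing-kernel lower bound is unavailable. Once $R_U$ is known to be an isometric bijection, the rest of the argument is a clean and standard weakly-null sequence construction.
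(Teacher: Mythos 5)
Your argument is correct and follows essentially the same route as the paper: invoke Theorem~\ref{main thm} to reduce to an affine map $f(x)=Ax+b$ with $A$ invertible, use Propositions~\ref{prop: restriction of kernel} and~\ref{prop: local global} to identify $H_{k|_{U^2}}$ isometrically with $H_k$ (which is exactly what the paper's operator $K_f$ packages), and then exploit feature vectors at points escaping to infinity together with Riemann--Lebesgue. The only variation is the non-compactness criterion: the paper (Proposition~\ref{non-compactness}) takes a separated sequence $\{x_n\}$ and shows the image sequence $K_f\varphi(x_n)=\varphi(f(x_n))$ stays uniformly non-Cauchy, hence has no convergent subsequence; you instead produce a weakly null unit sequence $g_n$ and use the reproducing property plus Cauchy--Schwarz to bound $\|C_{\tilde f}g_n\|$ from below. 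Both are standard dual formulations of the same obstruction, so I would count this as the same proof in slightly different clothing.
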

We can rephrase Corollary \ref{cor:191118-1} in Proposition \ref{non-compactness} on the basis of Theorem \ref{main thm}, namely, thanks to Theorem \ref{main thm}, it suffices to prove that affine maps cannot induce compact composition operators. See Section \ref{subsec: criteria affine} for the details.

Under Assumption \ref{main thm assumption 1}, 
$H_k$ is composed of entire functions on $\mathbb{C}^d$, thus our study is also
located as a study of composition operators on entire functions.
This topic has been extensively studied, for example, as in \cite{CMS, AA17, DKL17, SS17} and so on.
However, the function space $H_k$ in our study 
is quite different from those in these previous literature.
As a result, the behavior of composition operators dramatically changes compared with them, for example, composition operators therein can become compact linear operators, but does not in our situation as in the above corollary. 

Let us explain the outline of the proof of Theorem \ref{main thm} and \ref{main thm 2}. 
Regarding Theorem \ref{main thm}, the ``if'' part is not so hard, and we prove it in Section \ref{subsec: criteria affine}. 
The harder part of the proof 
is the ``only if'' part, and is obtained as a corollary of Theorems \ref{thm: affineness} and \ref{thm: analytic continuation} in Section \ref{s3}. 
Theorem \ref{thm: affineness} states that the ``only if'' part of Theorem \ref{main thm} holds under Assumptions \ref{main thm assumption 1}, \ref{main thm assumption 2}, and \ref{main thm assumption 4}, and assuming the existence of the holomorphic map $F:\mathbb{C}^d\rightarrow\mathbb{C}^d$ with $F|_U=f$. 
%The proof of Theorem \ref{thm: affineness} hinges on the Liouville theorem in complex analysis of several variables. 
%We consider the derivatives of $F$ and apply the Liouville theorem to them. 
Since there exists a natural isomorphism $\Psi_w: H_k \cong L^2(w)$ (see Corollary \ref{cor: natural isom}), the study of composition operators is reduces to that of the corresponding operators on $L^2(w)$. 
Thus, by considering the action of the corresponding operators on spaces of polynomials of various degrees in $L^2(w)$, the boundedness of the composition operators enables us to control the derivative of the holomorphic map $F$.
Theorem \ref{thm: analytic continuation} deduces that under Assumption \ref{main thm assumption 1} and $\mathcal{D}(C_f)=H_k$,
%, and the composition operator, 
%$C_f:H_k\rightarrow H_{k|_{U^2}}$, is a bounded, 
there exists a holomorphic map $F:\mathbb{C}^d\rightarrow\mathbb{C}^d$ with $F|_U=f$. 
The proof involves an explicit construction of the analytic continuation of $f$ in terms of the boundedness of $C_f$. 
Here, we emphasize that $f$ is originally a mere map defined on an open subset $U\subset\mathbb{R}^d$, not the whole space, but we prove that the boundedness of $C_f$ shows the map $f$ is a restriction of holomorphic map $F$ defined on $\mathbb{C}^d$. 
%Thus, Theorem \ref{main thm} is a direct consequence of the two theorems.
As for Theorem \ref{main thm 2}, the hardest part in the proof is to 
verify Assumption \ref{main thm assumption 2}.
We show that
the left hand side in Assumption \ref{main thm assumption 2} 
is closely related to an asymptotic behavior of greatest zeros 
of orthogonal polynomials in a weighted $L^2$-space on $\mathbb{R}$.
We improve Freud's methods
in his works \cite{Fre74-2, Fre86}
on asymptotic behaviors of orthogonal polynomials, 
and check Assumption \ref{main thm assumption 2}.
We include the details in Section \ref{section4}.
\paragraph{\bf Notation:}
%We denote the set of real numbers (resp. non-negative numbers
%by
%$\mathbb{R}$ (resp. $\mathbb{R}_{\ge0}$).
In this paper, we always work in  the $d$-dimensional Euclidean space.
For $r \ge 0$, we denote by $C^r$ the space 
of $C^r$ functions on $\mathbb{R}^d$.
For a non-negative measurable function  $w$ on $\mathbb{R}^d$, and $p\in[1,\infty)$, we denote by $L^p(w)$ the space composed of the equivalent classes of measurable functions $h$ vanishing on $\{ w = \infty \}$ such that $\int_{\mathbb{R}^d} |h(x)|^p w(x) {\rm d}x < \infty$.
In the case of $w \equiv 1$, we abbreviate $L^p(1)$ to $L^p$.
We also denote by $L^\infty$ the space of essentially bounded measurable functions on $\mathbb{R}^d$.
For any measurable set $A$ in an Euclidean space, we denote by $\mathbf{1}_A$ the characteristic function supported on $A$, and by $|A|$ the volume of $A$ with respect to the Lebesgue measure. 
We denote by ${\rm M}_d(\mathbb{R})$ (resp. ${\rm GL}_d(\mathbb{R})$) the set of square real matrices of size $d$ (resp. regular real matrices of size $d$).

\paragraph{\bf Acknowledgement} We would like thank 
Professor Takeshi Katsura and Dr. Fuyuta Komura
at Keio University for valuable discussion and comments. 
Thanks to their remarks, we could show that $\phi_k^{-1}$ is continuous 
in Proposition \ref{prop: phi homeo}. 
This work was supported by a JST CREST Grant, 
Number JPMJCR1913, 
Japan, and JST ACTX Grant, Number JPMJAX2004, Japan.
The third author is supported 
by Grant-in-Aid for Scientific Research (C) (19K03546), 
the Japan Society for the Promotion of Science.
The third author
is supported by the Japan Society 
for People’s Friendship University of Russia.
%%%%%%%%%%%%%
%%%%%%%%%%%%%
%%%%%%%%%%%%%
%%%%%%%%%%%%%
%Preliminaries
%%%%%%%%%%%%%
%%%%%%%%%%%%%
%%%%%%%%%%%%%
%%%%%%%%%%%%%
\section{Preliminaries}
\label{s2}
In this section, we review the notion of reproducing kernel Hilbert spaces associated with positive definite functions and composition operators and then show some of their basic properties.

\subsection{Reproducing kernel Hilbert spaces with positive definite functions}
Let $E$ be an arbitrary abstract (non-void) set and $k:E\times E\rightarrow\mathbb{C}$ be a map. Denote by $\mathbb{C}^E$ the linear space of all maps from $E$ to $\mathbb{C}$. A {\em reproducing kernel Hilbert space} 
{\rm (}RKHS for short{\rm)} with respect to $k$ is a Hilbert space $H_k \subset \mathbb{C}^E$ satisfying the following two conditions:
\begin{enumerate}
\item For any $x\in E$, the map $k_x:= k(x,\cdot)$ is an element of $H_k$.
\item For any $x\in E$ and $h\in H_k$, we have $\langle h, k_x\rangle_{H_k}=h(x)$.

\end{enumerate}
If such $H_k$ exists, $H_k$ is unique as a set and we call $k$ a {\em positive definite kernel}. We note that if $k$ is a positive definite function in the sense described in Introduction, there exists a unique Hilbert space $H_k \subset \mathbb{C}^E$ satisfying the above two conditions 
(see, \cite{SS} for more detail).
%If RKHS $H_k$ with respect to $k$ exists, $H_k$ is unique as a set. 
The second condition is known as the reproducing property 
of the RKHS $H_k$. 
Here, we define the {\em feature map} by
\[\phi_k:E\rightarrow H_k;~ x\mapsto k_x.\] 
For any subset $F\subset E$, we define a closed subspace of $H_k$ by
\[ H_{k, F} = \overline{\langle \phi_k(F) \rangle_\mathbb{C}}, \]
which is the closure of the linear subspace generated by the set $\phi_k(F)$. Accordingly, we see that $H_{k,F}$ is isomorphic to $H_{k|_{F^2}}$:
\begin{proposition}
\label{prop: restriction of kernel}
Let $k$ be a positive definite kernel on $E$, and let $F\subset E$. Then, the restriction map $\mathbb{C}^{E}\rightarrow\mathbb{C}^F$ induces an isomorphism $r_{k,F}:H_{k,F}\rightarrow H_{k|_{F^2}}$.
\end{proposition}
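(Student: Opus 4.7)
The plan is to exhibit $r_{k,F}$ as the honest pointwise restriction map $h \mapsto h|_F$ and to check three things: that it is well defined on $H_{k,F}$, that it is an isometry, and that it is surjective onto $H_{k|_{F^2}}$. The central observation driving everything is that for each $x \in F$,
\[
(k_x)|_F(y) = k(y,x) = (k|_{F^2})(y,x) = \bigl(k|_{F^2}\bigr)_x(y), \qquad y \in F,
\]
so the pointwise restriction sends the generating set $\phi_k(F) \subset H_{k,F}$ bijectively onto the generating set $\phi_{k|_{F^2}}(F) \subset H_{k|_{F^2}}$.

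First I would verify the isometry on the dense subspace $\mathrm{span}\,\phi_k(F)$ of $H_{k,F}$. By the reproducing property in each space,
\[
\langle k_x, k_y \rangle_{H_k} = k(y,x) = \bigl\langle (k|_{F^2})_x, (k|_{F^2})_y \bigr\rangle_{H_{k|_{F^2}}}, \qquad x,y \in F,
\]
and extending sesquilinearly shows that restriction preserves norms on $\mathrm{span}\,\phi_k(F)$. Since this space is dense in $H_{k,F}$ by definition, the map extends by continuity to a linear isometry $\tilde r : H_{k,F} \to H_{k|_{F^2}}$. Surjectivity follows because the image of $\tilde r$ is closed (isometric image of a complete space) and contains the dense subspace $\mathrm{span}\,\phi_{k|_{F^2}}(F)$ of $H_{k|_{F^2}}$.

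The only place where care is needed — the one step I would flag as the main subtlety, though not a genuine obstacle — is to confirm that the abstract continuous extension $\tilde r$ really coincides with pointwise restriction on all of $H_{k,F}$, so that the statement "$r_{k,F}$ is induced by the restriction mapping $\mathbb{C}^E \to \mathbb{C}^F$" is literally true. For this I would fix $h \in H_{k,F}$ and a sequence $h_n \in \mathrm{span}\,\phi_k(F)$ with $h_n \to h$ in $H_k$. Norm convergence in an RKHS implies pointwise convergence via Cauchy–Schwarz, $|h_n(x) - h(x)| = |\langle h_n - h, k_x\rangle_{H_k}| \le \|h_n - h\|_{H_k}\sqrt{k(x,x)}$, so $h_n|_F \to h|_F$ pointwise; on the other hand $\tilde r(h_n) = h_n|_F \to \tilde r(h)$ in $H_{k|_{F^2}}$ and thus also pointwise on $F$ by the same argument inside $H_{k|_{F^2}}$. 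Uniqueness of pointwise limits gives $\tilde r(h) = h|_F$, which completes the identification.
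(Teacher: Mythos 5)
Your proof is correct and follows the same underlying idea as the paper's (which is a one-line argument: restriction sends $\phi_k(x)$ to $\phi_{k|_{F^2}}(x)$, hence induces the isomorphism); you simply spell out the isometry on the dense span, the continuous extension, the surjectivity, and the identification of the abstract extension with literal pointwise restriction, all of which the paper leaves implicit. Nothing is missing and nothing is different in substance.
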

\begin{proof}
For any $x\in F$, the restriction map allocates $\phi_{k|_{F^2}}(x)$ to $\phi_k(x)$; thus, it induces the isomorphism $r_{k,F}$ between the Hilbert spaces. 
\end{proof}

\begin{definition}
A map $u:{\mathbb R}^d\rightarrow{\mathbb C}$ is a {\em positive definite function} if $k(x,y):=u(x-y)$ is a positive definite kernel. 
%Suppose that $k$ is a positive definite kernel, and hence, the function $u$ is a positive definite function. 
We call $H_k$ the RKHS associated with $u$.
\end{definition}
Thanks to Bochner's theorem \cite[p. 148]{Katznelson-text-04}, a positive definite function on $\mathbb{R}^d$ can be realized as a Fourier transform of a finite Borel measure; namely, we have the following proposition.
\begin{proposition}
Let $u$ be a non-zero ${\mathbb C}$-valued continuous function on ${\mathbb R}^d$. Then, $u$ is a positive definite function if and only if there exists a finite Borel measure $\mu$ on $\mathbb{R}^d$ such that 
\[u(x)
=\widehat{\mu}(x)
:=\int_{{\mathbb R}^d}{\rm e}^{-2\pi ix\cdot \xi}\,d\mu(\xi).\]
\end{proposition}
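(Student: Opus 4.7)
The ``if'' direction is a direct computation: if $u(x)=\int e^{2\pi i x\cdot\xi}\,d\mu(\xi)$, then for any finite $F\subset\mathbb{R}^d$ and $X:F\to\mathbb{C}$,
\[
\sum_{p,q\in F}\overline{X(p)}X(q)\,u(p-q)
=\int_{\mathbb{R}^d}\left|\sum_{p\in F}X(p)\,e^{-2\pi i p\cdot \xi}\right|^2 d\mu(\xi)\geq 0,
\]
so $u$ is positive definite. This part is almost a one-liner.

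For the ``only if'' direction, my plan is to proceed via the Riesz representation theorem. First, from the positive semi-definiteness of the $2\times 2$ matrix $[u(p-q)]_{p,q\in\{0,x\}}$ I extract the basic facts that $u(0)\geq 0$, that $u(-x)=\overline{u(x)}$, and that $|u(x)|\leq u(0)$; continuity of $u$ and this bound together imply $u$ is bounded and uniformly continuous. Next, I would pass the defining quadratic-form inequality through a Riemann-sum limit to obtain the integrated version: for every $\phi\in C_c(\mathbb{R}^d)$,
\[
\iint u(x-y)\phi(x)\overline{\phi(y)}\,dx\,dy\geq 0.
\]
After the substitution $z=x-y$ this reads $\int u(z)\,g(z)\,dz\geq 0$, where $g=\phi*\widetilde{\overline{\phi}}$ with $\widetilde{h}(x):=h(-x)$. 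A direct computation gives $\widehat{g}=|\widehat{\phi}|^2\geq 0$.

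This suggests defining, for $h\in\mathcal{S}(\mathbb{R}^d)$, the linear functional $\Lambda(h):=\int u(z)\,\mathcal{F}^{-1}[h](z)\,dz$. The previous step shows $\Lambda(|\widehat{\phi}|^2)\geq 0$ for every $\phi\in\mathcal{S}$. The next step is to upgrade this to positivity on every non-negative test function: given $h\in C_c(\mathbb{R}^d)$ with $h\geq 0$, mollify to obtain a smooth $h_\varepsilon$, take a smooth square root $\sqrt{h_\varepsilon+\varepsilon}$, and let $\phi_\varepsilon:=\mathcal{F}^{-1}[\sqrt{h_\varepsilon+\varepsilon}]$, so that $|\widehat{\phi_\varepsilon}|^2\to h$ in a way compatible with $\Lambda$. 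This yields $\Lambda(h)\geq 0$ for every non-negative $h$, so by Riesz representation there is a Borel measure $\mu$ with $\Lambda(h)=\int h\,d\mu$. Unwinding the definition and using Fourier inversion on Schwartz functions gives $\int\check{h}(z)u(z)\,dz=\int h\,d\mu$ for all $h\in\mathcal{S}$; the bound $|u|\leq u(0)$ forces $\mu$ to be finite (in fact $\mu(\mathbb{R}^d)\leq u(0)$), and testing against an approximate identity combined with the continuity of $u$ concludes $u(x)=\widehat{\mu}(x)$ everywhere.

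The main obstacle is the bootstrap from ``$\Lambda$ is non-negative on squares $|\widehat\phi|^2$'' to ``$\Lambda$ is non-negative on all test functions $h\geq 0$''; the smooth-square-root trick works but is delicate, and an equally viable alternative would be the GNS construction (building a cyclic unitary representation of $\mathbb{R}^d$ on the completion of $\mathbb{C}[\mathbb{R}^d]$ with inner product $\langle\delta_x,\delta_y\rangle:=u(x-y)$) followed by the SNAG/Stone spectral theorem, which produces $\mu$ as the spectral measure of the cyclic vector. Either route is standard but non-trivial, and given that the statement is the classical Bochner theorem (cited here to Katznelson), invoking the reference is the most efficient presentation.
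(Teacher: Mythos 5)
The paper offers no proof of this proposition: it is stated as a citation to Bochner's theorem, \cite[p.~148]{Katznelson-text-04}, and you correctly conclude your proposal by noting that invoking the reference is the most efficient presentation, which is exactly what the authors do.

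Your sketch of the Riesz-representation route is the standard one and is essentially sound, but the bootstrap step as written has a genuine gap. You propose to realize a nonnegative test function $h$ as a limit of $\bigl|\widehat{\phi_\varepsilon}\bigr|^2$ with $\phi_\varepsilon:=\mathcal{F}^{-1}\bigl[\sqrt{h_\varepsilon+\varepsilon}\,\bigr]$. The function $\sqrt{h_\varepsilon+\varepsilon}$ is indeed smooth (it is bounded below by $\sqrt{\varepsilon}>0$), but for precisely that reason it does not decay: it tends to $\sqrt{\varepsilon}$ at infinity, hence is neither in $\mathcal{S}$ nor in $L^2$, and its inverse Fourier transform is a tempered distribution containing a Dirac mass at the origin rather than a test function. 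So $\bigl|\widehat{\phi_\varepsilon}\bigr|^2$ is not a legitimate input to your functional $\Lambda$. The standard fixes are either (i) to add a smooth \emph{compactly supported} strictly positive bump $\varepsilon\psi$ with $\psi>0$ on a neighborhood of $\supp h$ and take the square root there (one must then argue smoothness across $\partial\supp\psi$, which requires some care), or (ii) to bypass the square-root trick entirely by working with $u_\varepsilon(x):=u(x)e^{-\varepsilon|x|^2}$, which is still positive definite and lies in $L^1$, showing directly via the integrated positivity and Parseval that $\widehat{u_\varepsilon}\ge 0$, and then letting $\varepsilon\to 0$ with a weak-$*$ compactness argument on the uniformly bounded measures $\widehat{u_\varepsilon}\,d\xi$. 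The GNS/SNAG route you mention is also fine. None of this affects the paper, which (rightly) treats the proposition as a black-box classical theorem.
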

%In particular, we have
%\begin{corollary}
%Let $u\in C\cap L^2 \setminus \{0\}$ with ${\widehat u}\in L^1 \cap L^\infty$. Then $u$ is a positive definite function if and only if ${\widehat u}(\xi)\ge0$ for almost all $\xi\in\mathbb{R}^d$.
%\end{corollary}

We shall now prove a proposition for the feature map for an RKHS associated with a positive definite function $u$:
\begin{proposition}
\label{prop: phi homeo}
Let $u$ be a non-zero positive definite function. Assume that $u$ is continuous and $u(x)\rightarrow 0$ as $|x|\rightarrow \infty$. Write $k(x,y):=u(x-y)$ as above. Then the feature map $\phi_k:\mathbb{R}^d\rightarrow H_k$ is injective and continuous. Moreover, the inverse $\phi_k^{-1}:\phi_k\left(\mathbb{R}^d\right)\rightarrow\mathbb{R}^d$ is also continuous.
\end{proposition}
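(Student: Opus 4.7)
The plan is to base everything on the elementary identity
\[
\|\phi_k(x)-\phi_k(y)\|_{H_k}^2
=\langle k_x-k_y,k_x-k_y\rangle_{H_k}
=2u(0)-2\operatorname{Re}u(x-y),
\]
which uses the reproducing property and the Hermitian symmetry $u(-t)=\overline{u(t)}$ (a consequence of positive definiteness). Note also that $u(0)>0$, since $|u(x)|\le u(0)$ for positive definite $u$ and $u\not\equiv 0$. Continuity of $\phi_k$ is then immediate from continuity of $u$ at the origin.

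For injectivity, I would first derive the standard comparison
\[
|u(x)-u(y)|^2
=|\langle k_0,k_x-k_y\rangle_{H_k}|^2
\le \|k_0\|_{H_k}^2\,\|k_x-k_y\|_{H_k}^2
=2u(0)\bigl(u(0)-\operatorname{Re}u(x-y)\bigr)
\]
via Cauchy--Schwarz. If $\phi_k(x)=\phi_k(y)$, set $z:=x-y$; then $u(z)=u(0)$, so the right-hand side vanishes whenever $x'-y'=z$, which forces $u(\,\cdot\,+z)\equiv u$. Iterating, $u(nz)=u(0)>0$ for every $n\in\mathbb{Z}$, contradicting $u(w)\to 0$ as $|w|\to\infty$ unless $z=0$. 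Hence $\phi_k$ is injective.

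For the continuity of the inverse, I would argue by sequences. Suppose $\phi_k(x_n)\to\phi_k(x)$ in $H_k$; equivalently, $\operatorname{Re}u(x_n-x)\to u(0)$. The decay condition $u(w)\to 0$ as $|w|\to\infty$ immediately rules out the possibility that $\{x_n\}$ has an unbounded subsequence, for along such a subsequence $u(x_{n_k}-x)\to 0\ne u(0)$. So $\{x_n\}$ is bounded, and any convergent subsequence $x_{n_k}\to x^{*}$ satisfies $\phi_k(x_{n_k})\to\phi_k(x^{*})$ by continuity of $\phi_k$; uniqueness of limits and injectivity then give $x^{*}=x$. Since every subsequence has a further subsequence converging to $x$, we conclude $x_n\to x$.

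The only slightly delicate step is the injectivity argument, where I must combine the Cauchy--Schwarz inequality with the decay of $u$ to rule out nonzero ``periods''; the continuity of $\phi_k^{-1}$ is then a soft subsequential argument, with the decay condition again doing the essential work to keep $\{x_n\}$ bounded.
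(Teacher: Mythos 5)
Your proposal is correct and follows the same three-step structure as the paper's proof: continuity from the norm identity $\|\phi_k(x)-\phi_k(y)\|^2=2u(0)-2\operatorname{Re}u(x-y)$, injectivity by showing that a nonzero difference $z=x-y$ with $\phi_k(x)=\phi_k(y)$ forces $u(\cdot+z)\equiv u$, contradicting decay, and continuity of $\phi_k^{-1}$ via a bounded-subsequence argument powered by the same decay. The only (minor) variation is that you reach periodicity through Cauchy--Schwarz, whereas the paper gets it directly from $\langle\phi_k(a+x),\phi_k(a)\rangle_{H_k}=\langle\phi_k(a+x),\phi_k(b)\rangle_{H_k}$; both are fine, and your write-up is if anything a bit more careful in keeping the factor $u(0)$ explicit.
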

\begin{proof}
Since $|\!|\phi_k(x)-\phi_k(y)|\!|_{H_k}^2 = 2u(0)-2{\rm Re}(u(x-y))$, 
the continuity of $u$ implies that of $\phi_k$. We will prove the injectivity of $\phi_k$. Suppose
to the contrary 
that there exists $a,b\in\mathbb{R}^d$ with $a\neq b$ such that $\phi_k(a) = \phi_k(b)$. Then, for any $x\in \mathbb{R}^d$, 
\[u(x)=\langle \phi_k(a+x), \phi_k(a)\rangle_{H_k}=\langle \phi_k(a+x), \phi_k(b)\rangle_{H_k}=u(x+(a-b)),\] 
which contradicts the assumption that $u$ vanishes at infinity. Thus, the feature map is injective. Now, we prove the continuity of $\phi_k^{-1}:\phi_k(\mathbb{R}^d)\rightarrow\mathbb{R}^d$. Suppose that there exists a sequence $\{\phi_k(x_n)\}_{n\ge1}$ such that $\phi_k(x_n)\rightarrow \phi_k(a)$ for some $a\in\mathbb{R}^d$ but that $\{x_n\}_{n\ge0}$ does not converge to $a$. Since $\phi_k$ is injective and continuous, any convergent subsequence of $\{x_n\}_{n\ge0}$ converges to $a$; thus, we may assume $|x_n|\rightarrow\infty$ as $n\rightarrow\infty$. For any $x\in \mathbb{R}^d$, 
\begin{align*}
u(x)&=\langle \phi_k(x+a), \lim\limits_{n\rightarrow\infty}\phi_k(x_n)\rangle_{H_k}\\
&=\lim\limits_{n\rightarrow\infty}\langle \phi_k(x+a), \phi_k(x_n)\rangle_{H_k}\\
&=\lim\limits_{n\rightarrow\infty}u(x+a-x_n)\\
&=0.
\end{align*}
Since $u$ is not a constant function, this is a contradiction. Thus, $\phi_k^{-1}$ is continuous. 
\end{proof}
By the Riemann-Lebesgue theorem, we have the following corollary:
\begin{corollary}
Assume $H_k$ is an RKHS associated with a positive definite function in the form of $\widehat{w}$ 
for some non-negative $w\in L^1 \cap L^\infty \setminus \{0\}$.
Then the feature map $\phi_k$ is a homeomorphism from $\mathbb{R}^d$ onto $\phi_k(\mathbb{R}^d)$.
\end{corollary}
In this paper, we will only consider the RKHSs associated with a positive definite function in the form of $\widehat{w}$ for some $w\in L^1 \cap L^\infty \setminus \{0\}$.

%For a non-negative measurable function $w$ on $\mathbb{R}^d$, we define the Hilbert space $L^2(w)$ to be the space of functions composed of measurable functions $h$ vanishing on $\{w=\infty\}$ such that 
%\[\int_{\mathbb{R}^d}|h(x)|^2w(x)<\infty.\]
%The inner product is defined in an obvious way.
We have an explicit description of the RKHS $H_k$ above:
\begin{proposition}
\label{prop: explicit expression}
Let $w \in L^1 \cap L^\infty \setminus \{0\}$, 
and let $k(x,y)=\widehat{w}(x-y)$. Then, 
\[H_k = \left\{h\in C^0\cap L^2: \widehat{h}\in L^2(w^{-1})\right\}\] 
and its inner product is given by $\langle g,h\rangle_{H_k}=\int_{\mathbb{R}^d}\widehat{g}(\xi) \overline{\widehat{h}(\xi)}w(\xi)^{-1}\,d\xi$.
In particular, the Fourier transform of any element in $H_k$ is in $L^1$.
\end{proposition}
\begin{proof}
Since $L^2 \supset L^2(w^{-1})$, we define $V:=\mathcal{F}^{-1}\left(L^2(w^{-1})\right)$ be a Hilbert space with inner product $\langle g,h\rangle_V:=\big\langle \widehat{g},\widehat{h}\big\rangle_{L^2(w^{-1})}$.
By direct computation, we see that $k_x \in V$ and for any $h \in V$, $\langle h, k_x \rangle_V = h(x)$ almost all $x \in \mathbb{R}^d$.
%Note that $V$ is a subset of $L^2$ and the embedding map $V\rightarrow L^2$ is continuous; namely, there exists $B>0$ such that $\|h\|_{L^2}\le B\|h\|_V$.
We will show $V$ is regarded as a subspace of $C^0$; namely, any element $h\in V$ has a continuous representative.
For $x\in\mathbb{R}^d$, let $\widetilde{h}(x):= \langle h, k_x\rangle_V$. Since $\widetilde{h}(x)=h(x)$ for almost all $x\in\mathbb{R}^d$, it suffices to show that the map $\widetilde{h}$ is continuous. In fact, it 
is a consequence of the following inequality:
\[|\widetilde{h}(x)-\widetilde{h}(y)|\le 
\|h\|_V\cdot(k(x,x)+k(y,y)-k(x,y)-k(y,x))^{1/2} \quad(x,y \in {\mathbb R}^d).
\]
This inequality can be deduced from Schwartz's inequality.
Thus, we may regard $V \subset C^0 \subset \mathbb{C}^{\mathbb{R}^d}$.
By the uniqueness of the RKHS, we have $H_k = V$. 
\end{proof}
We define $\Psi_w: H_k \rightarrow L^2(w)$ by 
\[\Psi_w(h)(\xi):=w(\xi)^{-1}\widehat{h}(\xi).\]
Then, we immediately obtain the following corollary:
\begin{corollary}
\label{cor: natural isom}
%Let $u\in C\cap L^2 \setminus \{0\}$ be a positive definite function with ${\widehat u}\in L^1 \cap L^\infty$.
Let $w \in L^1 \cap L^\infty \setminus \{0\}$ 
be a non-negative measurable function.
Write $k(x,y)=\widehat{w}(x-y)$. 
Then, $\Psi_w$ is an isomorphism from $H_K$ to $ L^2(w)$.
\end{corollary}

\subsection{Properties of RKHSs for positive definite functions with a certain decay condition}
%Let $u\in C\cap L^2 \setminus \{0\}$ be a positive definite function with ${\widehat u}\in L^1 \cap L^\infty$.
Let $w \in L^1 \cap L^\infty \setminus \{0\}$ 
be a non-negative measurable function.
We define the following decay condition: For a positive integer $n>0$ and positive real number $a\ge0$, we define a function $u$ that satisfies $({\rm DC})_{n,a}^d$ if for any $\varepsilon >0$, there exists $L_\varepsilon>0$ such that
\begin{align*}
({\rm DC})_{n,a}^d&\hspace{25pt} w(\xi)
\le L_\varepsilon
(1+|\xi|^{2n+d+\varepsilon})^{-1}
{\rm e}^{-4\pi a|\xi|}~~{\rm a.e.}~\xi.
\end{align*}

For $a>0$, we define 
\[\mathfrak{X}_a^d:=\left\{z=(z_i)_{i=1}^d\in\mathbb{C}^d : |{\rm Im}(z)|<a\right\}.
\]
We also define
\[\mathfrak{X}_0^d:=\left\{z=(z_i)_{i=1}^d\in\mathbb{C}^d : {\rm Im}(z)=0
\right\}={\mathbb R}^d.
\]
By virtue of Proposition \ref{prop: explicit expression}, if $u$ satisfies $({\rm DC})_{n,a}^d$ for some $a>0$ (resp. $a=0$), then any element of $H_k$ is holomorphic on $\mathfrak{X}_a^d$ (resp. $C^n$ on $\mathfrak{X}_0^d$). As a result, we have the following proposition:

\begin{proposition}
\label{prop: local global}
%Let $u$ be a positive definite function satisfying the condition $({\rm DC})_{n,a}^d$ for some integer $n>0$ and $a>0$ $($resp. $a=0$$)$, and let $k(x,y)=u(x-y)$.
Let $w \in L^1 \cap L^\infty \setminus \{0\}$ be a non-negative function 
satisfying the condition $({\rm DC})_{n,a}^d$ for some integer $n>0$ and $a>0$, and let $k(x,y)=\widehat{w}(x-y)$.
Then, we have
\[H_{k,U}=H_k\]
for any open subset $U\subset \mathbb{R}^d$. 
\end{proposition}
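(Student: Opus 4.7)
The plan is to prove $H_{k,U}=H_k$ by showing the orthogonal complement $H_{k,U}^\perp$ in $H_k$ is trivial. So suppose $h\in H_k$ satisfies $\langle \phi_k(x),h\rangle_{H_k}=0$ for every $x\in U$; by the reproducing property this just says $h(x)=0$ for all $x\in U$. My goal is to propagate this vanishing from $U$ to all of $\mathbb{R}^d$.

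The key tool is the remark immediately preceding the proposition: under $({\rm DC})_{n,a}^d$ with $a>0$, every element of $H_k$ is (a representative of) a holomorphic function on the tube $\mathfrak{X}_a^d=\{z\in\mathbb{C}^d:|\mathrm{Im}(z)|<a\}$. Concretely, for $h\in H_k$ one has $\widehat{h}\in L^2(\widehat{u}^{-1})$ by Proposition \ref{prop: explicit expression}, and the decay condition together with Cauchy--Schwarz gives the absolute integrability of $\widehat{h}(\xi)e^{2\pi i z\cdot\xi}$ for $z\in \mathfrak{X}_a^d$, so the formula $h(z)=\int_{\mathbb{R}^d}\widehat{h}(\xi)e^{2\pi i z\cdot\xi}\,d\xi$ defines a holomorphic extension on $\mathfrak{X}_a^d$ (Morera plus Fubini in each variable). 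I would state this explicitly as the first step.

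Next, since $\mathfrak{X}_a^d$ is an open, connected subset of $\mathbb{C}^d$ containing the open set $U\subset\mathbb{R}^d\subset\mathfrak{X}_a^d$, and the holomorphic extension of $h$ vanishes on $U$, the identity theorem for holomorphic functions of several variables forces the extension to vanish identically on $\mathfrak{X}_a^d$. In particular, $h\equiv 0$ on $\mathbb{R}^d$, so $h=0$ in $H_k$. This shows $H_{k,U}^\perp=\{0\}$, hence $H_{k,U}=H_k$.

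The only subtle point is justifying the holomorphic extension rigorously from $({\rm DC})_{n,a}^d$; but this is essentially the same argument that underlies the parenthetical claim in the paragraph preceding the proposition, so I would simply invoke that remark (or write out the one-line majorization $|\widehat{h}(\xi)e^{2\pi i z\cdot\xi}|\le |\widehat{h}(\xi)|\,e^{2\pi a|\xi|}$ with the tail controlled by $\widehat{u}(\xi)^{1/2}e^{-2\pi a|\xi|}\in L^2$ after multiplying and dividing by $\widehat{u}(\xi)^{1/2}$). The identity-theorem step is then standard, and no further obstacle arises.
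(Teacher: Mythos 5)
Your proposal is correct and follows essentially the same route as the paper: both arguments show $H_{k,U}^\perp=\{0\}$ by observing that the orthogonality forces an element of $H_k$ to vanish on $U$, and then invoke the analyticity of $H_k$-functions (guaranteed by $(\mathrm{DC})_{n,a}^d$ with $a>0$, via the remark preceding the proposition) together with the identity theorem. You spell out the holomorphic extension across the tube $\mathfrak{X}_a^d$ more explicitly than the paper, which merely asserts real-analyticity on $\mathbb{R}^d$, but the underlying idea is identical.
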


\begin{proof}
It suffices to prove that $H_{k,U}^{\perp}=\{0\}$. Take an arbitrary $g\in H_{k,U}^\perp$. Then, we see that for any $x\in U$,
\[g(x)=\langle \phi_k(x), g\rangle_{H_k}=0.\] 
Since $g$ is an analytic function on $\mathbb{R}^d$, we have $g=0$. Therefore, $H_{k,U}^\perp=\{0\}$. 
\end{proof}

Under the condition $({\rm DC})_{n,a}^d$, for each $z\in \mathfrak{X}_a^d$, 
we define ${\bf e}_z\in L^2(w)$ by
\[{\bf e}_z(\xi):={\rm e}^{-2\pi i z^\top\xi}, \]
and we define the map,
\[\varphi:\mathfrak{X}_a^d\longrightarrow L^2(w); z\mapsto {\bf e}_z.\]
We should remark that in the case of $a>0$, for any $z\in\mathfrak{X}_a^d$, $\varphi(z)=\Psi_w(\phi_k(z))$, where $\phi_k(z)(x)$ is defined as the evaluation of the analytic continuation of $u$ at $x-z$ (note that $u$ is originally defined on $\mathbb{R}^d$). 
Accordingly, we have the following proposition:
\begin{proposition}
\label{prop: analyticity of feature maps}
%Let $u$ be a positive definite function satisfying the condition $({\rm DC})_{n,a}^d$ for some integer $n>0$ and $a>0$ $($resp. $a=0$$)$, and let $k(x,y)=u(x-y)$.
Let $w \in L^1 \cap L^\infty \setminus \{0\}$ be 
a non-negative function satisfying the condition 
$({\rm DC})_{n,a}^d$ 
for some integer $n>0$ and $a>0$ $($resp. $a=0$$)$.
Then the map $\varphi$ is holomorphic $($resp. differentiable$)$ in $\mathfrak{X}_a^d$ in the sense that for any $z=(z_j)_{j=1}^d\in \mathfrak{X}_a^d$, the limit 
\[\partial_{z_j}\varphi(z):=\lim\limits_{\substack{\varepsilon\rightarrow 0
}
}
\varepsilon^{-1}\left(\varphi(z+\varepsilon e_j)-\varphi(z)\right)\]
exists. Here, $e_j:=(\underset{1}{0},\ldots,0,\underset{j}{1},0,\dots,\underset{d}{0})\in \mathbb{C}^d$ denotes the $j$-th elementary vector. Moreover, for any $d$-variable polynomial (resp. $d$-variable polynomial of degree smaller than or equal to $n$) $q\in\mathbb{C}[\xi_1,\dots,\xi_d]$, we have
\begin{align}
\label{eq: derivative and polynomial}
\left[q\left(\frac{i\partial_{z_1}}{2\pi},\dots,\frac{i\partial_{z_d}}{2\pi}\right)\varphi\right](z)=q{\bf e}_z.
\end{align}
\end{proposition}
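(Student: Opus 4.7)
The plan is to establish existence of each partial derivative $\partial_{z_j}\varphi(z)$ as a limit in $L^2(\widehat u)$ via dominated convergence, and then deduce \eqref{eq: derivative and polynomial} by iteration and linearity. The pointwise identity $\partial_{z_j}{\bf e}_z(\xi)=-2\pi i\xi_j{\bf e}_z(\xi)$ is clear; the work is to promote it to $L^2(\widehat u)$-convergence, which is precisely what $({\rm DC})_{n,a}^d$ is calibrated for.

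The preliminary estimate is that $\xi^\alpha{\bf e}_z\in L^2(\widehat u)$ for every multi-index $\alpha$ when $a>0$, and for $|\alpha|\le n$ when $a=0$. Since $|{\bf e}_z(\xi)|^2=e^{4\pi\mathrm{Im}(z)\cdot\xi}\le e^{4\pi|\mathrm{Im}(z)||\xi|}$, condition $({\rm DC})_{n,a}^d$ yields, for any $\eta>0$,
\[
|\xi^\alpha{\bf e}_z(\xi)|^2\widehat u(\xi)\le L_\eta\,|\xi|^{2|\alpha|}(1+|\xi|^{2n+d+\eta})^{-1}e^{-4\pi(a-|\mathrm{Im}(z)|)|\xi|}.
\]
When $a>0$ and $z\in\mathfrak{X}_a^d$, the exponential absorbs any polynomial factor; when $a=0$ and $|\alpha|\le n$, the factor $|\xi|^{2|\alpha|-2n-d-\eta}$ is integrable at infinity for small $\eta>0$. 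For the first-order derivative at $z$, I would write the difference quotient
\[
\varepsilon^{-1}\bigl(\varphi(z+\varepsilon e_j)-\varphi(z)\bigr)(\xi)={\bf e}_z(\xi)\cdot\frac{e^{-2\pi i\varepsilon\xi_j}-1}{\varepsilon},
\]
whose pointwise limit as $\varepsilon\to 0$ is $-2\pi i\xi_j{\bf e}_z(\xi)$. Using $|(e^w-1)/w|\le e^{|w|}$ with $w=-2\pi i\varepsilon\xi_j$, the right-hand factor is bounded in modulus by $2\pi|\xi_j|e^{2\pi|\varepsilon||\xi|}$; restricting $\varepsilon$ to a small neighbourhood in $\mathbb{C}$ with $|\mathrm{Im}(z)|+|\varepsilon|<a$ (when $a>0$), or to $\varepsilon\in\mathbb{R}$ together with the sharper $|e^{it}-1|\le|t|$ (when $a=0$), the majorant lies in $L^2(\widehat u)$ by the preceding estimate. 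Dominated convergence then gives $\partial_{z_j}\varphi(z)=-2\pi i\xi_j{\bf e}_z$ in $L^2(\widehat u)$.

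Iterating this argument applied to the $L^2(\widehat u)$-valued maps $z\mapsto\xi^\beta{\bf e}_z$, which remain in $L^2(\widehat u)$ as long as $|\beta|\le n-1$ (for $a=0$) or unconditionally (for $a>0$), I obtain $\partial_z^\alpha\varphi(z)=(-2\pi i)^{|\alpha|}\xi^\alpha{\bf e}_z$ for every admissible $\alpha$. Linearity together with the algebraic identity $(i/(2\pi))^{|\alpha|}(-2\pi i)^{|\alpha|}=1$ then immediately yields \eqref{eq: derivative and polynomial}. The main (but mild) obstacle is the bookkeeping during iteration: at each step one needs a single polynomial-times-exponential dominator that serves uniformly over a neighbourhood of $z$, and $({\rm DC})_{n,a}^d$ is tailored to provide exactly this flexibility through the free parameter $\eta$ and the exponential margin $a-|\mathrm{Im}(z)|$.
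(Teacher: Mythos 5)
Your proof is correct, and it takes a genuinely different route from the paper's. The paper works with multi-order finite difference operators $\Delta_\varepsilon^{(\bm{n})}$ all at once, directly computing the $L^2(\widehat u)$-norm of the error term and invoking $({\rm DC})_{n,a}^d$ to make it vanish. You instead establish the first-order derivative $\partial_{z_j}\varphi(z)=-2\pi i\,\xi_j{\bf e}_z$ by dominated convergence — finding a single $L^2(\widehat u)$ majorant valid for all sufficiently small $\varepsilon$, via $|(e^w-1)/w|\le e^{|w|}$ when $a>0$ and the sharper $|e^{it}-1|\le|t|$ when $a=0$ — and then iterate the same argument on $z\mapsto\xi^\beta{\bf e}_z$ to reach higher orders. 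Your iterative approach is arguably cleaner: it directly yields iterated partial derivatives (which is what \eqref{eq: derivative and polynomial} actually requires), whereas the paper's convergence of a symmetric multi-order difference quotient does not, by itself, give the existence of iterated partials without an additional justification. Your tracking of the degree budget ($|\beta|\le n-1$ before the last differentiation step when $a=0$) correctly replicates the paper's $|\bm{n}|\le n$ constraint, and your choice of a small complex disc $|\varepsilon|<a-|\mathrm{Im}(z)|$ ensures the complex difference quotient is controlled, so the holomorphy claim for $a>0$ is fully justified.
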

\begin{proof}
For any positive number $\varepsilon>0$,
any non-negative integer $n\ge0$, $j=1,\dots,d$, and any function $\psi:\mathfrak{X}_a^d\rightarrow L^2(w)$, we define
\[\left(\Delta^{(n)}_{j,\varepsilon}\psi\right)(z)
:=(\varepsilon^nn!)^{-1}\sum\limits_{r=0}^n(-1)^{n-r}\left(\begin{array}{c}n\\r\end{array} \right)\psi(z+r\varepsilon e_j).\]
Moreover, for $\bm{n}=(n_1,\dots,n_d)$, we define
\[\Delta_\varepsilon^{(\bm{n})}=\Delta_{1,\varepsilon}^{(n_1)}\cdots\Delta_{d,\varepsilon}^{(n_d)}.\]
It suffices to show that 
\begin{align}\lim\limits_{\varepsilon\rightarrow 0}\left(\frac{i}{2\pi}\right)^{|\bm{n}|}\left(\Delta_\varepsilon^{(\bm{n})}\varphi\right)(z)
=\xi_1^{n_1}\cdots\xi_d^{n_d}{\bf e}_z \label{eq: derivative}
\end{align}
for any $\bm{n}$ if $a>0$, or $|\bm{n}|\le n$ if $a=0$. Here, we denote $|\bm{n}|:=\sum\limits_{j}n_j$. By direct computation, we have
\begin{align*}
&\|\text{(left hand side of (\ref{eq: derivative}))} - \text{(right hand side of (\ref{eq: derivative}))}\|_{L^2(w)}^2\\
&=\int_{\mathbb{R}^d}\left|\prod_{j=1}^d\left(\frac{{\rm e}^{-2\pi i\varepsilon\xi_j}-1}{-2\pi i\varepsilon}\right)-\prod_{j=1}^d\xi_j^{n_j}\right|^2{\rm e}^{2\pi {\rm Im}(z)^\top \xi}w(\xi)\,{\rm d}\xi.
\end{align*}
Thus, from the definition of $({\rm DC})_{n,a}^d$, we see that the last integral converges to 0. 
\end{proof} 

\subsection{Composition operators on RKHS}
We give a definition of composition operators for our setting:
\begin{definition}
Let $k$ and $\ell$ be positive definite kernels on sets $E$ and $F$, respectively. For any map $f:E\rightarrow F$, the {\em composition operator} $C_f:H_{\ell}\rightarrow H_k$ is a linear operator defined by $C_f(h):=h\circ f$ of domain
\[\mathcal{D}(C_f)=\left\{h\in H_\ell:h\circ f\in H_k\right\}.\] 
\end{definition}
Since we see that a composition operators is closed, by the closed graph theorem, we have the following proposition:
\begin{proposition}
\label{prop: preserving property and boundedness}
If $\mathcal{D}(C_f)=H_\ell$, the composition operator $C_f$ is a bounded operator.
\end{proposition}
%\begin{remark}
%We easily see that $C_f$ is a closed operator; thus, if $\mathcal{D}(C_f)=H_\ell$, by the closed graph theorem, $C_f$ is a bounded operator.
%\end{remark}
Accordingly, the adjoint of $C_f$ has the following property:
\begin{proposition}
\label{prop: adjoint of composition operator}
Let $k$ and $\ell$ be a positive definite kernel on $E$ and $F$, respectively, and let $f:E\rightarrow F$ be a map such that $\mathcal{D}(C_f)=H_\ell$.
%and that $C_f$ is bounded. 
Then, we have
\[C_f^*(\phi_k(x))=\phi_\ell(f(x))
\quad (x \in E).\]
\end{proposition}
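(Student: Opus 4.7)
The plan is to verify the identity by pairing each side against an arbitrary $h\in H_\ell$ and invoking the nondegeneracy of the inner product on $H_\ell$. Since $\mathcal{D}(C_f)=H_\ell$ and $C_f$ is bounded by assumption, the adjoint $C_f^\ast:H_k\to H_\ell$ is a well-defined bounded operator satisfying the standard identity
\[
\langle C_f h, g\rangle_{H_k}=\langle h, C_f^\ast g\rangle_{H_\ell}
\]
for every $h\in H_\ell$ and $g\in H_k$. The strategy is to specialize $g=\phi_k(x)$ and apply the two reproducing properties.

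Fix $h\in H_\ell$. The reproducing property in $H_\ell$ gives $\langle \phi_\ell(f(x)), h\rangle_{H_\ell}=h(f(x))$ directly. For the other pairing, I would rewrite $\langle C_f^\ast(\phi_k(x)), h\rangle_{H_\ell}$ using conjugate symmetry and the adjoint identity above as $\langle \phi_k(x), C_f h\rangle_{H_k}$, and then apply the reproducing property in $H_k$ together with the defining formula $C_f h = h\circ f$ to obtain $(h\circ f)(x)=h(f(x))$. Thus both pairings coincide for every $h\in H_\ell$, which yields $C_f^\ast(\phi_k(x))=\phi_\ell(f(x))$.

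There is essentially no obstacle here: the argument is two invocations of the reproducing property sandwiched around the adjoint identity, and requires no estimate. The only item that needs care is bookkeeping of the conjugate-linearity convention dictated by the reproducing identity $\langle k_x, h\rangle_{H_k}=h(x)$, which fixes the linear slot of the inner product and therefore the orientation in which the adjoint identity must be applied.
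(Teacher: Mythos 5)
Your argument is correct and is exactly the ``straightforward computation'' the paper delegates to \cite[Proposition 2.1]{IFIHK}: pair against an arbitrary $h\in H_\ell$, unwind the adjoint identity, and apply the reproducing property on each side to land on $h(f(x))$. Your note about respecting the conjugate-linearity convention fixed by $\langle k_x,h\rangle_{H_k}=h(x)$ is the right bookkeeping point, and nothing further is needed.
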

\begin{proof}
The proof entails a straightforward computation.  
%See \cite[Proposition 2.1]{IFIHK}.
\end{proof}
Consequently, we have the following corollary:
\begin{corollary}
\label{cor: continuity of f}
Let $E\subset\mathbb{R}^d$, and let $f:E\rightarrow \mathbb{R}^{d}$ be a map.
Suppose that we have a non-zero positive definite function $u$ on $\mathbb{R}^d$.
Write $k(x,y):=u(x-y)$ as above. Assume $u$ is continuous and $u(x)\rightarrow 0$ as $|x|\rightarrow \infty$. 
%Then, if the domain $\mathcal{D}(C_f)$ is the whole space $H_{k}$ and $C_f$ is bounded, the original map $f$ is continuous.
Then, if $\mathcal{D}(C_f) = H_{k}$, the original map $f$ is continuous.
\end{corollary}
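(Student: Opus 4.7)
The strategy is to factor $f$ through feature maps and use that $C_f^*$ intertwines them, together with the homeomorphism properties of $\phi_k$ already established in Proposition \ref{prop: phi homeo}.

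First I would invoke the closed graph theorem: since $\mathcal{D}(C_f) = H_k$ and $C_f$ is a closed operator (Remark after Definition 2.8), in fact $C_f$ is bounded, hence its adjoint $C_f^{*}$ is a bounded linear operator from $H_{k|_{E^{2}}}$ into $H_k$. By Proposition \ref{prop: adjoint of composition operator} applied with the source kernel $k|_{E^{2}}$ on $E$ and the target kernel $k$ on ${\mathbb R}^d$, we have the key identity
\[
C_f^{*}\bigl(\phi_{k|_{E^{2}}}(x)\bigr) = \phi_k(f(x)) \qquad (x \in E).
\]

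Next I would check that the feature map $\phi_{k|_{E^{2}}} : E \to H_{k|_{E^{2}}}$ is continuous. This follows from the identity $\|\phi_{k|_{E^{2}}}(x) - \phi_{k|_{E^{2}}}(y)\|_{H_{k|_{E^{2}}}}^{2} = 2\bigl(u(0) - \mathrm{Re}\,u(x-y)\bigr)$ together with the assumed continuity of $u$, exactly as in the first line of the proof of Proposition \ref{prop: phi homeo}. Composing with the bounded (hence continuous) operator $C_f^{*}$, the map $x \mapsto \phi_k(f(x)) = C_f^{*}(\phi_{k|_{E^{2}}}(x))$ is continuous from $E$ to $H_k$.

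Finally, the hypotheses of Proposition \ref{prop: phi homeo} are satisfied (non-zero continuous positive definite $u$ vanishing at infinity), so $\phi_k : {\mathbb R}^d \to H_k$ is injective and its inverse $\phi_k^{-1} : \phi_k({\mathbb R}^d) \to {\mathbb R}^d$ is continuous. Since $\phi_k(f(x)) \in \phi_k({\mathbb R}^d)$ for every $x \in E$, I can write $f = \phi_k^{-1} \circ (\phi_k \circ f)$, realizing $f$ as the composition of two continuous maps; this yields continuity of $f$ on $E$.

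No step here looks delicate: the entire proof is essentially a chase through the identity $C_f^{*} \circ \phi_{k|_{E^{2}}} = \phi_k \circ f$. The only point to keep straight is that Proposition \ref{prop: phi homeo} is stated for the feature map on ${\mathbb R}^d$ (which is what we need, because $\phi_k \circ f$ lands in $\phi_k({\mathbb R}^d)$), while the continuity of $\phi_{k|_{E^{2}}}$ on $E$ is a direct one-line consequence of continuity of $u$ and does not require the vanishing-at-infinity hypothesis.
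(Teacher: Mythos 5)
Your proof is correct and takes essentially the same route as the paper: the paper writes the factorization $f = \phi_k^{-1}\circ C_f^*\circ r_{k,E}\circ\phi_k$ and deduces continuity from that of each factor, whereas you work directly with $\phi_{k|_{E^2}}=r_{k,E}\circ\phi_k|_E$ and verify its continuity from scratch — the same chain, just slightly different bookkeeping of the restriction isomorphism from Proposition \ref{prop: restriction of kernel}.
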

\begin{proof}
From Propositions 
\ref{prop: restriction of kernel}, 
\ref{prop: preserving property and boundedness} 
and 
\ref{prop: adjoint of composition operator}, we see that
\[f = \phi_k^{-1}\circ C_f^*\circ r_{k,E}\circ\phi_k.\]
Since the right-hand side is continuous, so is $f$. 
\end{proof}

At the end of this section, we define another linear operator $K_f$ under the condition $({\rm DC})_{n,a}^d$ for some $a>0$, keeping in mind that $H_k=H_{k,U}$ according to Proposition \ref{prop: local global}.

\begin{definition}
\label{PF operator}
Let $U\subset \mathbb{R}^d$ be an open subset. 
%Denote the restriction operator by $r_U:H_{k,U} \to H_{k|_{U^2}}$.
For any map $f:U\rightarrow {\mathbb R}^d$ such that $\mathcal{D}(C_f)=H_k$, define a linear operator $K_f:L^2(w)\rightarrow L^2(w)$ by
\begin{align}
\label{eq: pf operator}
K_f:L^2(w)\overset{\Psi_w^{-1}}{\cong}H_k=H_{k,U}\overset{r_{k,U}}{\cong}H_{k|_{U^2}}\overset{C_f^*}{\longrightarrow}H_k\overset{\Psi_w}{\longrightarrow}L^2(w).
\end{align}
Here, $r_{k,U}$ is the restriction map defined 
in Proposition \ref{prop: restriction of kernel}.
\end{definition}

%%%%%%%%%%%%%
%%%%%%%%%%%%%
%%%%%%%%%%%%%
%%%%%%%%%%%%%
%Main results
%%%%%%%%%%%%%
%%%%%%%%%%%%%
%%%%%%%%%%%%%
%%%%%%%%%%%%%
\section{Main results}
\label{s3}
Here, we prove the main results. We establish the criterion 
of the boundedness of composition operators in the case that the map $f$ is affine. 
%Then, we summarize some of the properties of RKHS with positive definite functions decaying faster than any exponential functions. 

\subsection{Boundedness and compactness of composition operators for affine maps}
\label{subsec: criteria affine}
Recall that we defined
\begin{align*}
{\mathcal G}(w)
&=
\bigcup_{\lambda>0}
\left\{A \in {\rm GL}_d({\mathbb R})\,:\,
w(A^\top \xi) \ge \lambda w(\xi)
\mbox{ for almost all $\xi \in {\mathbb R}^d$}
\right\}.
\end{align*}
As the following proposition shows, ${\mathcal G}(u)$ is a natural class.
\begin{proposition}
\label{prop: boundedness for affine maps}
%Suppose that $u\in C^0\cap L^2 \setminus \{0\}$ satisfies ${\widehat u}\in L^1\cap L^\infty$,
Let $w \in L^1 \cap L^\infty \setminus \{0\}$ be a non-negative function.
Write $k(x,y)=\widehat{w}(x-y)$ for $x,y \in \mathbb{R}^d$. Let $f:\mathbb{R}^d\rightarrow\mathbb{R}^d$ be an affine map, namely, $f(x)=Ax+b$ such that $A\in {\rm M}_d({\mathbb R}^d)$ and $b\in\mathbb{R}^d$.
Then, $A \in {\mathcal G}(w)$ if and only if $\mathcal{D}(C_f)=H_k$ (and $C_f$ is bounded on $H_k$).
\end{proposition}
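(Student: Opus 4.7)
The plan is to reduce the boundedness question to a weighted-$L^2$ estimate on the Fourier side via an explicit change of variables, and then to read off the condition $A \in \mathcal{G}(u)$ from that estimate. Throughout I will work with the description of $H_k$ given in Proposition \ref{prop: explicit expression}, namely that $h \in H_k$ iff $h \in C \cap L^2$ and $\widehat{h} \in L^2(\widehat{u}^{-1})$, with $\|h\|_{H_k}^2 = \int_{\mathbb{R}^d} |\widehat{h}(\xi)|^2 \widehat{u}(\xi)^{-1} \, d\xi$.

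First I would observe that if $A$ is singular, then for $v \in \ker A \setminus \{0\}$ the function $h \circ f$ is invariant under translation by $v$ for every $h$, so $h \circ f \in L^2$ forces $h \circ f \equiv 0$, which fails for generic $h \in H_k$; this rules out $\mathcal{D}(C_f) = H_k$ and shows that under either hypothesis of the biconditional, $A$ must be invertible. Assuming $A \in \mathrm{GL}_d(\mathbb{R})$, a direct change of variable $y = Ax+b$ yields
\[
\widehat{h \circ f}(\xi) = |\det A|^{-1}\, e^{2\pi i (A^{-1}b)\cdot \xi}\, \widehat{h}(A^{-\top}\xi),
\]
and substituting $\eta = A^{-\top}\xi$ (so $d\xi = |\det A|\, d\eta$) gives the key identity
\[
\|h \circ f\|_{H_k}^2 = |\det A|^{-1} \int_{\mathbb{R}^d} |\widehat{h}(\eta)|^2\, \widehat{u}(A^\top \eta)^{-1} \, d\eta.
\]

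For the "if" direction, suppose $A \in \mathcal{G}(u)$, so $\widehat{u}(A^\top \eta) \ge \lambda \widehat{u}(\eta)$ a.e. for some $\lambda>0$. Plugging $\widehat{u}(A^\top \eta)^{-1} \le \lambda^{-1}\widehat{u}(\eta)^{-1}$ into the identity above shows $\|h\circ f\|_{H_k}^2 \le (\lambda |\det A|)^{-1}\|h\|_{H_k}^2$ for every $h \in H_k$; continuity of $h \circ f$ is immediate, $L^2$-membership follows from $\|h\circ f\|_{L^2}^2 = |\det A|^{-1}\|h\|_{L^2}^2$, and the Fourier estimate places $\widehat{h\circ f}$ in $L^2(\widehat{u}^{-1})$. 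Hence $\mathcal{D}(C_f) = H_k$ and $C_f$ is bounded.

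For the "only if" direction, suppose $\mathcal{D}(C_f) = H_k$ and $C_f$ is bounded with norm at most $M$. The identity then reads
\[
\int_{\mathbb{R}^d} |\widehat{h}(\eta)|^2\, \widehat{u}(A^\top \eta)^{-1}\, d\eta \;\le\; M^2 |\det A| \int_{\mathbb{R}^d} |\widehat{h}(\eta)|^2\, \widehat{u}(\eta)^{-1}\, d\eta
\]
for every $h \in H_k$. Because $\Psi_u$ is an isomorphism onto $L^2(\widehat{u})$, the functions $\widehat{h}$ arising in this way form a dense class in $L^2(\widehat{u}^{-1})$ and in particular include $\widehat{h} = \widehat{u}\, \mathbf{1}_E$ for any Borel set $E$ of finite $\widehat{u}$-measure with $E \subset \{\widehat{u}>0\}$. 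Testing on such indicator-type inputs and applying the Lebesgue differentiation theorem yields the pointwise comparison $\widehat{u}(A^\top \eta)^{-1} \le M^2 |\det A|\, \widehat{u}(\eta)^{-1}$ a.e. on $\{\widehat{u}>0\}$, which on the complement is vacuous; this is exactly $A \in \mathcal{G}(u)$ with $\lambda = (M^2|\det A|)^{-1}$. The only real technical nuisance is this last measure-theoretic extraction of a pointwise inequality from an integral inequality, but it is entirely standard, so I anticipate no serious obstacle.
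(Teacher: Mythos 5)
Your proof is correct and follows essentially the same strategy as the paper's: reduce to the weighted Fourier-side estimate via the change of variables $\eta = A^{-\top}\xi$, then test on a rich enough class of $\widehat{h}$ to extract the pointwise comparison $\widehat{u}(A^\top\cdot) \gtrsim \widehat{u}$. The paper tests with smooth bumps $h = |\widehat{g}|^2$ rather than indicators and quietly absorbs the $|\det A|$ factor into the constant, while you more explicitly rule out singular $A$ and track the Jacobian --- these are cosmetic differences, not a different argument.
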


\begin{proof}
First, we prove the ``if'' part.
Since any element of $H_k$ vanishes at $\infty$ and $C_f$ preserves $H_k$, the matrix $A$ has to
be regular.
Let $h$ be an arbitrary non-negative smooth function with compact support and vanishing in an open set including $\{w=0\}$. We define $g:=\Psi_w^{-1}(h^{1/2}w^{-1})=\mathcal{F}^{-1}(h^{1/2})$. Since $C_f$ is bounded, there exists $L>0$ such that
\[ L|\!|g|\!|_{H_k}^2-|\!|C_fg|\!|_{H_k}^2\ge0.\]
Since 
\begin{align*}
\mathcal{F}(C_fg)(\xi) &= \int_{\mathbb{R}^d} g(Ax + b){\rm e}^{-2\pi i x^\top\xi} {\rm d}x\\
&= |\det A|^{-1}{\rm e}^{2\pi i b^\top\xi} \int_{\mathbb{R}^d} g(x) {\rm e}^{-2\pi i x^\top(A^{-\top}\xi)} {\rm d}x\\
&= |\det A|^{-1}{\rm e}^{2\pi i b^\top\xi} \mathcal{F}(g)(A^{-\top}\xi),
\end{align*}
by Proposition \ref{prop: explicit expression}, we see that
\[\int_{\mathbb{R}^d}h(\xi)\left(|\det{A}|\cdot Lw(\xi)^{-1}-w(A^\top\xi)^{-1}\right)\,{\rm d}\xi\ge0.\]
Since $h$ is arbitrary, we have 
\[w(A^\top\xi)\ge \det{A}^{-1}L^{-1}w(\xi),\]
namely, $A\in \mathcal{G}(w)$.

Now we prove the ``only if'' part. Let $A\in\mathcal{G}(w)$, and let $\lambda>0$ such that $w(A^\top \xi) \ge \lambda w(\xi)$ for almost all $\xi$. As in the same computation as above, for any $g\in H_k$, we have
\[ \lambda^{-1}|\!|g|\!|_{H_k}^2-|\!|C_fg|\!|_{H_k}^2\ge0.\]
Thus, we conclude that $\mathcal{D}(C_f)=H_k$ and that $C_f$ is bounded on $H_k$. 
\end{proof}

We also observe that the composition operators induced by affine maps cannot be compact:
\begin{proposition}
\label{non-compactness}
%Let $u\in C^0\cap L^2$ be a nonzero function satisfying ${\widehat u}\in L^1\cap L^\infty$,
Let $w \in L^1 \cap L^\infty \setminus \{0\}$ be a non-negative function.
Write $k(x,y)=\widehat{w}(x-y)$ for $x,y \in \mathbb{R}^d$ as before. 
Let $f:\mathbb{R}^d\rightarrow\mathbb{R}^d; x\mapsto Ax+b$ be such that $A \in {\mathcal G}(w)$. 
Then the composition operator $C_f$ cannot be a compact operator
on $H_k$.
\end{proposition}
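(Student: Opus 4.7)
The plan is to argue by contradiction using the feature vectors $\phi_k(x_n)$ with $|x_n|\to\infty$. If $C_f$ were compact, so would be its adjoint $C_f^*:H_k\to H_k$. By Proposition \ref{prop: adjoint of composition operator},
\[
C_f^*(\phi_k(x)) \;=\; \phi_k(f(x)) \;=\; \phi_k(Ax+b)
\]
for every $x\in\mathbb{R}^d$. Since $k(x,x)=u(0)$ and a nonzero continuous positive definite function $u$ satisfies $u(0)>0$, both $\phi_k(x_n)$ and $C_f^*\phi_k(x_n)=\phi_k(Ax_n+b)$ have constant norm $\sqrt{u(0)}$ independent of $n$. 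Moreover, because $A\in\mathcal{G}(u)\subset\mathrm{GL}_d(\mathbb{R})$ is invertible, $|x_n|\to\infty$ forces $|Ax_n+b|\to\infty$ as well.

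The key step is to show that $\phi_k(x_n)\rightharpoonup 0$ weakly in $H_k$ whenever $|x_n|\to\infty$. By the reproducing property, $\langle\phi_k(x_n),h\rangle_{H_k}=h(x_n)$, so it suffices to verify that every $h\in H_k$ vanishes at infinity. This is where Proposition \ref{prop: explicit expression} enters decisively: it asserts that $\widehat{h}\in L^1(\mathbb{R}^d)$ for every $h\in H_k$, so $h=\mathcal{F}^{-1}[\widehat{h}]$ is the continuous inverse Fourier transform of an integrable function and hence vanishes at infinity by the Riemann--Lebesgue lemma. Once weak convergence is in hand, compactness of $C_f^*$ would promote it to norm convergence, forcing $\|C_f^*\phi_k(x_n)\|_{H_k}\to 0$, which contradicts the constant-norm computation above.

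There is no substantial obstacle beyond the appeal to Proposition \ref{prop: explicit expression} for the $L^1$-integrability of $\widehat{h}$; the precise form of $A$ and the translation $b$ play only the minimal role of guaranteeing that $f$ sends sequences tending to infinity to sequences tending to infinity. The argument is in the same spirit as the standard proof that Koopman operators of measure-preserving transformations are never compact, adapted here to the reproducing-kernel setting via the feature-vector identity for $C_f^*$.
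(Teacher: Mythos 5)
Your proof is correct. You and the paper both pick a sequence of feature vectors $\phi_k(x_n)$ with $|x_n|\to\infty$ and both use that invertibility of $A$ carries divergence of $x_n$ to divergence of $f(x_n)=Ax_n+b$, but the compactness contradiction is packaged differently. The paper works with $K_f=\Psi_u C_f^* r_U \Psi_u^{-1}$ and shows the image sequence $\{K_f\varphi(x_n)\}=\{\varphi(Ax_n+b)\}$ admits no Cauchy subsequence by the kernel distance identity
\[
\|\varphi(Ax_m+b)-\varphi(Ax_n+b)\|_{L^2(\widehat u)}^2 = 2u(0)-2\,\mathrm{Re}\,u(A(x_m-x_n)),
\]
which tends to $2u(0)>0$ as $m,n\to\infty$ with $m\ne n$, since $u$ vanishes at infinity by Riemann--Lebesgue applied to $\widehat u\in L^1$. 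You instead invoke the weak-to-norm characterization of compactness: you establish $\phi_k(x_n)\rightharpoonup 0$ because every $h\in H_k$ vanishes at infinity (via $\widehat h\in L^1$ from Proposition \ref{prop: explicit expression}), and then observe that $\|C_f^*\phi_k(x_n)\|_{H_k}\equiv\sqrt{u(0)}$ is bounded away from $0$. The paper's route is marginally more elementary in that it only requires decay of $u$ itself, whereas yours needs the (true, but one step further) decay of all elements of $H_k$; on the other hand, yours avoids the explicit pairwise distance computation and is the standard ``weak null sequence with non-null image'' template in operator theory. Both proofs share the same implicit assumption that $C_f^*\phi_k(x_n)=\phi_k(f(x_n))$ is available for $x_n$ ranging over all of $\mathbb{R}^d$ rather than only over $U$; this is justified in the paper's setting by the decay condition (A) via Propositions \ref{prop: local global} and \ref{prop: analyticity of feature maps}, but it is worth being aware that the proposition as literally stated does not impose (A).
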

\begin{proof}
Put $u = \widehat{w}$.
It suffices to show that the operator $K_f$ (Definition \ref{PF operator}) cannot be compact. 
Assume to the contrary that $K_f$ is compact. Fix a sequence $\{x_n\}_{n\ge0}$ such that $\inf\limits_{m,n>R}|x_m-x_n|\rightarrow\infty$ 
as $R\rightarrow\infty$ (for example, $x_n=(n^2,0,\dots,0)$). 
Since $K_f$ is compact, we may assume $\{K_f(\varphi(x_n))\}$ converges to an element $h\in L^2(w)$. Meanwhile, 
$\|K_f\varphi(x_m)-K_f\varphi(x_n)\|_{L^2(w)}=2u(0)-2{\rm Re}(u)(A(x_m-x_n))$. 
Since $u(x)$ converges to 0 as $|x|\rightarrow\infty$ by the Riemann--Lebesgue theorem, 
by taking the limit $m,n\rightarrow\infty$ with $m\neq n$, we find that $u(0)=0$. 
Since $u$ is a positive definite function, $|u(x)| = |\langle k_x, k_0 \rangle_{H_k}| \le u(0)$ by the Cauchy-Schwarz inequality.
Thus, we have $u=0$. This is a contradiction. 
\end{proof}

\subsection{Affineness of holomorphic maps with bounded composition operators}

In this section, 
we prove that maps are affine if they admit an analytic continuation 
and the domains of their composition operators are the whole space $H_k$, namely, the following theorem:
\begin{theorem}
\label{thm: affineness}
%Let $u\in C^0\cap L^2 \setminus \{0\}$ be a positive definite function with ${\widehat u}\in L^1\cap L^\infty$,
Let $w \in L^1 \cap L^\infty \setminus \{0\}$ be a non-negative function,
and let $k(x,y)=\widehat{w}(x-y)$. We impose the following three assumptions on $w$:
\begin{enumerate}[label=(\Alph*)]
\item the function $w$ satisfies $({\rm DC})_{n,a}^d$ for all $a>0$,\label{thm A: decay condition}
%\item $\displaystyle\sup\limits_{y \in \mathbb{R}^d_{\ge 0}}\left(\liminf_{n \to \infty}\sqrt[n]{ \sup\limits_{P \in P_n \setminus \{0\}}\frac{\|m_y P\|_{L^2(w)}}{\|P\|_{L^2(w)}}}\right)<\infty$ \label{thm A: technical condition},
\item there exists a constant $B >0$ such that for any $y \in \mathbb{R}^d$,
\[ \limsup_{n\rightarrow \infty} \mathcal{E}_n^-(y; w),~ \limsup_{n\rightarrow\infty} \mathcal{E}^+_n(y; w) < B, \]
\label{thm A: technical condition}
\item $\langle \mathcal{G}(w)\rangle_{\mathbb{R}} 
= {\rm M}_d(\mathbb{R})$.
\label{thm A: generator condition}
\end{enumerate}
Then, for any open set $U\subset{\mathbb R}^d$ and any map $f:U\rightarrow\mathbb{R}^d$ such that $F|_U=f$ for some holomorphic map $F:\mathbb{C}^d\rightarrow\mathbb{C}^d$, the map $f$ is affine in the form,
\[f(x)=Ax+b\]
with $A\in\mathcal{G}(w)$ and $b\in\mathbb{R}^d$ 
if and only if the composition operator $C_f:H_k\rightarrow H_{k|_{U^2}}$ is defined on the whole space $H_k$ and is bounded. 
Here, we recall that $P_n$ is the space of $d$-variable polynomials of degree smaller than or equal to $n$. 
\end{theorem}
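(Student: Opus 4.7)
The ``if'' direction is a direct application of Proposition \ref{prop: boundedness for affine maps}: writing $f = T_b \circ A$ with $T_b(y) = y + b$, the operator $C_{T_b}$ acts on the Fourier side as multiplication by the unimodular function $e^{2\pi i b \cdot \xi}$ and is therefore an isometry on $H_k$, while $C_A$ is bounded on $H_k$ precisely when $A \in \mathcal{G}(u)$. Hence $C_f = C_A \circ C_{T_b}$ is bounded on $H_k$, and post-composing with the restriction to $U$ (Proposition \ref{prop: restriction of kernel}) gives the bounded operator $C_f : H_k \to H_{k|_{U^2}}$.

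For the ``only if'' direction, the plan is to transfer the problem to $L^2(\widehat{u})$ via the operator $K_f$ of Definition \ref{PF operator}, analytically continue the adjoint identity to all of $\mathbb{C}^d$, and apply Liouville's theorem on polynomials to force $F$ to be affine. From Proposition \ref{prop: adjoint of composition operator} and the identification $\Psi_u(\phi_k(x)) = \mathbf{e}_x$ one gets $K_f(\mathbf{e}_x) = \mathbf{e}_{f(x)}$ for $x \in U$. Condition \ref{thm A: decay condition} together with Proposition \ref{prop: analyticity of feature maps} makes $z \mapsto \mathbf{e}_z \in L^2(\widehat{u})$ entire on $\mathbb{C}^d$, and since $F$ is entire so is $z \mapsto \mathbf{e}_{F(z)}$; analytic continuation upgrades the identity to
\[
K_f(\mathbf{e}_z) = \mathbf{e}_{F(z)} \qquad (z \in \mathbb{C}^d).
\]
Differentiating in $z$ and invoking \eqref{eq: derivative and polynomial} yields
\[
K_f(\xi^\alpha \mathbf{e}_z) = T_z(\xi^\alpha)\,\mathbf{e}_{F(z)}
\]
for every multi-index $\alpha$, where a multivariate Faà di Bruno expansion realises $T_z(\xi^\alpha)$ as a polynomial in $\xi$ of degree at most $|\alpha|$ with coefficients that are polynomial expressions in the partial derivatives $\partial^\beta F(z)$ with $|\beta| \le |\alpha|$. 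Extending linearly produces, for each $n$, a linear endomorphism $T_z : P_n \to P_n$ whose matrix entries in the monomial basis are entire functions of $z$; crucially $T_z(\xi_j) = \sum_k \partial_{z_j} F_k(z)\,\xi_k$, so showing $T_z|_{P_n}$ to be $z$-independent for some $n \ge 1$ will force $\partial F$ to be constant, hence $F$ to be affine.

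To extract this $z$-independence I will apply Liouville's theorem to the holomorphic $\mathrm{End}(P_n)$-valued function $z \mapsto T_z|_{P_n}$ for $n$ sufficiently large. Starting from $\|K_f\| \le M := \|C_f\|$ and writing $\|R\,\mathbf{e}_w\|_{L^2(\widehat{u})} = \|m_w R\|_{L^2(\widehat{u})}$, condition \ref{thm A: technical condition} supplies an upper bound $\|m_z P\|_{L^2(\widehat u)} \le (C_0 + \varepsilon)^n \|P\|_{L^2(\widehat u)}$ for $P \in P_n$ past the threshold it provides. For the matching lower bound, the Cauchy--Schwarz inequality
\[
\|Q\|_{L^2(\widehat u)}^2 \le \|m_w Q\|_{L^2(\widehat u)}\,\|m_{-w} Q\|_{L^2(\widehat u)}
\]
applied with $w = F(z)$, combined with the same upper bound for $m_{-F(z)}$, gives $\|m_{F(z)} Q\|_{L^2(\widehat u)} \ge (C_0 + \varepsilon)^{-n}\|Q\|_{L^2(\widehat u)}$. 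Substituting yields
\[
\|T_z(P)\|_{L^2(\widehat{u})} \le M (C_0 + \varepsilon)^{2n}\|P\|_{L^2(\widehat{u})}
\]
for $P \in P_n$, a bound on $\|T_z|_{P_n}\|_{\mathrm{op}}$ independent of $z$. Liouville's theorem then forces $T_z|_{P_n} \equiv T_0|_{P_n}$; applying this to $\xi_j \in P_1 \subset P_n$ yields $\partial_{z_j} F_k(z) = \partial_{z_j} F_k(0)$ for all $z$ and all $j, k$, so $F(z) = Az + b$ with $A_{kj} := \partial_{z_j} F_k(0)$ and $b := F(0)$. Finally, Proposition \ref{prop: boundedness for affine maps} converts the boundedness of $C_f$ into $A \in \mathcal{G}(u)$.

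The main technical obstacle is the uniformity in $z$ of the estimate feeding Liouville: condition \ref{thm A: technical condition} is stated as $\sup_z \limsup_n(\cdots)^{1/n}$, so the threshold past which $\|m_z P\|/\|P\| \le (C_0 + \varepsilon)^n$ holds can a priori depend on $z$. The heart of the proof is to pass from this pointwise-in-$z$ asymptotic bound to a genuinely $z$-uniform polynomial-growth bound on the entries of $T_z|_{P_n}$ at a fixed large $n$, so that Liouville applies to the resulting finite-dimensional, matrix-valued entire function.
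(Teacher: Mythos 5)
The ``if'' direction is handled correctly and matches the paper's (brief) argument via Proposition~\ref{prop: boundedness for affine maps}. The gap is in the ``only if'' direction, and it is fatal as written.

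Your plan is to fix a large $n$ and apply Liouville to the $\mathrm{End}(P_n)$-valued entire function $z\mapsto T_z|_{P_n}$, using the claimed $z$-independent bound $\|T_z|_{P_n}\|\le M(C_0+\varepsilon)^{2n}$. But condition~\ref{thm A: technical condition} only controls the \emph{limsup in $n$} of $\|m_z|_{P_n}\|^{1/n}$ for each fixed $z$; it does \emph{not} give a bound $\|m_z|_{P_n}\|\le(C_0+\varepsilon)^n$ at a fixed $n$ uniformly in $z$. In fact no such bound can hold: for the Gaussian $u(x)=e^{-|x|^2}$, the computation in Section~\ref{section4} shows $\|m_z|_{P_n}\|$ grows like $e^{|z|^2}$, so for any fixed $n$ the operator $T_z|_{P_n}$ has unbounded norm as $|\mathrm{Im}\,z|\to\infty$ and Liouville does not apply. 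You flag this obstacle in your final paragraph, but the ``Substituting yields\dots'' step two paragraphs earlier proceeds as if the uniform bound were available; that step is false. A further symptom that the argument over-reaches: if $T_z|_{P_n}$ were constant in $z$ for some $n\ge1$ you would conclude $J_F$ constant directly, making condition~\ref{thm A: generator condition} redundant, which it is not.

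The paper circumvents precisely this by \emph{not} trying to control $T_z|_{P_n}$ at a fixed $n$. Instead it passes to the graded quotient $Q_{n,z}/Q_{n-1,z}\cong P_n/P_{n-1}$ (Corollary~\ref{lem:190515-5}), where $K_f$ induces the symmetric power $\mathscr{S}^nJ_F(z)$, and tracks a single \emph{eigenvalue}: if $\alpha_z$ is an eigenvalue of $J_F(z)$ with eigenvector $w_z$, then $\alpha_z^n$ is an eigenvalue of $\mathscr{S}^nJ_F(z)$ with eigenvector $w_z^n$, giving
\[
|\alpha_z|^n \le \|K_f\|\cdot\|m_{-z}|_{P_n}\|\cdot\|m_{F(z)}|_{P_n}\|
\]
for \emph{every} $n$. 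Taking the $n$-th root and letting $n\to\infty$ converts condition~\ref{thm A: technical condition} (a limsup statement) into the genuinely $z$-uniform bound $|\alpha_z|\le C_0^2$, even though $\|m_z|_{P_n}\|$ itself is not uniform in $z$. That yields only boundedness of $\mathrm{tr}\,J_F$, whence $\mathrm{tr}\,J_F$ is constant by Liouville (Lemma~\ref{lem:190515-6}); condition~\ref{thm A: generator condition} is then exactly what is needed to pass from ``$\mathrm{tr}(AJ_F)$ constant for all $A\in\mathcal{G}(u)$'' to ``$J_F$ constant.'' To repair your proof you would need to replace the fixed-$n$ Liouville step with this eigenvalue/$n$-th-root argument on the graded quotients.
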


We always regard the space of $d$-variable polynomials $\mathbb{C}[\xi_1,\dots,\xi_d]$ as a subspace of $L^2(w)$ as functions of $(\xi_1,\dots,\xi_d)$. We also fix an open set $U\subset \mathbb{R}^d$ and a map $f:U\rightarrow \mathbb{R}^d$ such that $F|_U=f$ for some $F=(F_1,\dots, F_d):\mathbb{C}^d\rightarrow \mathbb{C}^d$.

The following simple proposition shows that the information of $F$ is included in $K_f$ 
although $K_f$ is defined by the map $f$ initially defined only on $U$:
\begin{proposition}
Assume Assumption \ref{thm A: decay condition} in Theorem \ref{thm: affineness}. For any $z\in\mathbb{C}^d$, we have
\[K_f(\varphi(z))=\varphi(F(z)).\]
\end{proposition}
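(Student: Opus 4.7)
The plan is to verify the identity first on the real open set $U$ by direct computation, and then extend to all of $\mathbb{C}^d$ by analytic continuation. Under condition \ref{thm A: decay condition}, the decay estimate $({\rm DC})_{n,a}^d$ holds for every $a>0$, which is exactly what makes both $\varphi$ and the analytic continuation of $\phi_k$ defined on all of $\mathbb{C}^d$.

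For $x \in U$, I would simply unpack the definitions. Since $\varphi(x)=\Psi_u(\phi_k(x))$, the factorization in Definition \ref{PF operator} gives
\[
K_f(\varphi(x)) = \Psi_u \circ C_f^* \circ r_U \circ \Psi_u^{-1}(\varphi(x)) = \Psi_u(C_f^*(r_U(\phi_k(x)))).
\]
By Proposition \ref{prop: restriction of kernel}, $r_U(\phi_k(x)) = \phi_{k|_{U^2}}(x)$, and Proposition \ref{prop: adjoint of composition operator} yields $C_f^*(\phi_{k|_{U^2}}(x)) = \phi_k(f(x))$. Applying $\Psi_u$ and using $F|_U = f$ gives $\varphi(f(x)) = \varphi(F(x))$, which establishes the desired identity on $U$.

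Next I would upgrade both sides to holomorphic $L^2(\widehat{u})$-valued functions of $z \in \mathbb{C}^d$. By Proposition \ref{prop: analyticity of feature maps} combined with condition \ref{thm A: decay condition}, the map $\varphi$ is holomorphic on all of $\mathbb{C}^d$; since $K_f$ is a bounded linear operator, the composition $z \mapsto K_f(\varphi(z))$ is holomorphic. For the right-hand side, $F$ is holomorphic by assumption, and composing with $\varphi$ preserves holomorphicity. To avoid any fuss about the chain rule in the Banach-valued setting, the cleanest route is to test against arbitrary $g \in L^2(\widehat{u})$: the scalar function $z \mapsto \langle \varphi(F(z)), g \rangle_{L^2(\widehat{u})} = \int_{\mathbb{R}^d} e^{2\pi i F(z)^\top \xi}\,\overline{g(\xi)}\,\widehat{u}(\xi)\,d\xi$ is holomorphic in $z$ by differentiation under the integral sign, which is justified by the fast decay of $\widehat{u}$ from condition \ref{thm A: decay condition}.

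Finally, I would conclude by the identity theorem: for each $g \in L^2(\widehat{u})$, the scalar holomorphic function $z \mapsto \langle K_f(\varphi(z)) - \varphi(F(z)), g\rangle_{L^2(\widehat{u})}$ on $\mathbb{C}^d$ vanishes on the nonempty real open set $U$, hence vanishes identically. Since $g$ is arbitrary, $K_f(\varphi(z)) = \varphi(F(z))$ for all $z \in \mathbb{C}^d$. The main obstacle, though minor, is the joint holomorphicity of $\varphi \circ F$; I deal with it by reducing to the scalar case via duality rather than invoking a Banach-valued chain rule directly.
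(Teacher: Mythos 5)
Your proposal is correct and follows essentially the same route as the paper's terse proof: verify the identity on $U$ by unwinding the definition of $K_f$ together with Propositions \ref{prop: restriction of kernel} and \ref{prop: adjoint of composition operator}, observe that both $K_f\circ\varphi$ and $\varphi\circ F$ are holomorphic $L^2(\widehat{u})$-valued maps on $\mathbb{C}^d$, and conclude by the identity theorem. The duality reduction you use for the holomorphicity of $\varphi\circ F$ is a safe but not strictly necessary detour, since weak and strong holomorphy coincide for Banach-space-valued maps, so the chain rule applies directly; otherwise the argument matches the paper's.
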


\begin{proof}
By Proposition \ref{prop: analyticity of feature maps}, both $K_f\circ\varphi$ and $\varphi\circ F$ 
are $L^2(w)$-valued holomorphic functions on $\mathbb{C}^d$. 
Since both holomorphic maps are identical on the open set $U\subset\mathbb{R}^d$ by definition, their values are the same on $\mathbb{C}^d$. 
\end{proof}

The following lemma is crucial for controlling the Jacobian matrix of $F$:
\begin{lemma}\label{lem:190515-4}
Assume Assumption \ref{thm A: decay condition} in Theorem \ref{thm: affineness} and $\mathcal{D}(C_f)=H_k$.
Then for any $d$-variable homogeneous polynomial $q\in\mathbb{C}[\xi_1,\dots,\xi_d]$, we have
\begin{align}
K_f(q{\bf e}_z)
=\left(\mathscr{S}\left[ \frac{i}{2\pi}J_F(z) \right]q + r\right){\bf e}_{F(z)},
\label{eq: Kfq}
\end{align}
where $r\in\mathbb{C}[\xi_1,\dots,\xi_d]$ is a polynomial of degree smaller than ${\rm deg}(q)$ without a constant term, and $\mathscr{S}[A]:\mathbb{C}[\xi_1,\dots,\xi_d]\rightarrow\mathbb{C}[\xi_1,\dots,\xi_d]$ is the symmetric product of a matrix $A = (a_{ij})$ of size $d$, namely, the linear map defined on $\mathbb{C}[\xi_1, \dots, \xi_d]$ via the correspondence 
%$\xi_i \mapsto \sum\limits\limits_{m=1}^d partial_{z_i}F_m(z)\xi_m$.
$\xi_k \mapsto \sum\limits\limits_{m=1}^d a_{mk}\xi_m$.
\end{lemma}

\begin{proof}
It suffices to show that in the case of $q(\xi_1,\dots, \xi_d) =\xi_{i_1}\cdots \xi_{i_k}$ ($i_j \in \{1,\dots, d\}$), 
\begin{align}
    K_f(q {\bf e}_z) - \prod_{m=1}^k \left[\sum\limits_{j=1}^d \partial_{z_j} F_{i_m}(z) \cdot \frac{i\xi_j}{2\pi}\right]\cdot {\bf e}_{F(z)} = r_z e_{F(z)},\label{claim}
\end{align}
where $r_z$ is a polynomial of degree smaller than $k$ and its coefficients are entire functions with respect to the variable $z$.
We prove (\ref{claim}) by induction on $k$.
In the case of $k=1$, since $K_f$ is continuous, we see that 
\[K_f[\partial_{z_{i_1}}\varphi(z)]=\partial_{z_{i_1}}(K_f[\phi(z)]).\]
The left hand side is equal to $K_f[-2\pi i \xi_{i_1} {\bf e}_z]$, and the right hand side is equal to 
$\left(\sum\limits_{j=1}^d \partial_{z_j} F_{i_1}(z) \cdot \xi_j\right)\cdot {\bf e}_{F(z)}$.
Thus, we have (\ref{claim}).
Let $k>1$ and for $\varepsilon >0$, put
\[\psi_\varepsilon := i\frac{\varphi(z+\varepsilon e_{i_k})-\varphi(z)}{2\pi \varepsilon},\]
where $e_{i_k}$ is the $i_k$-th elementary vector in $\mathbb{C}^d$.
Then, by induction hypothesis, we have
\begin{align*}
    &K_f[\xi_{i_1}\cdots \xi_{i_{k-1}} \psi_\varepsilon]\\
    %&= \prod_{m=1}^{k-1} \left[\sum\limits_{j=1}^d \partial_{z_j} F_{i_m}(z+\varepsilon e_{i_k}) \cdot \frac{i\xi_j}{2\pi}\right]\cdot \frac{i{\bf e}_{F(z+\varepsilon e_{i_k})}}{2\pi\varepsilon} 
    %- \prod_{m=1}^{k-1} \left[\sum\limits_{j=1}^d \partial_{z_j} F_{i_m}(z) \cdot \frac{i\xi_j}{2\pi}\right]\cdot \frac{{i\bf e}_{F(z)}}{2 \pi \varepsilon}\\
    %&~~~~~+ i\frac{s_{z+\varepsilon e_{i_k}}{\bf e}_{F(z+\varepsilon e_{i_k})} - s_{z} {\bf e}_{F(z)}}{2\pi\varepsilon}\\
    &=\prod_{m=1}^{k-1} \left[\sum\limits_{j=1}^d \partial_{z_j} F_{i_m}(z+\varepsilon e_{i_k}) \cdot \frac{i\xi_j}{2\pi}\right]\cdot \psi_\varepsilon \\
    &~+ \frac{i{\bf e}_{F(z)}}{2\pi \varepsilon}\left\{\prod_{m=1}^{k-1} \left[\sum\limits_{j=1}^d \partial_{z_j} F_{i_m}(z+\varepsilon e_{i_k}) \cdot \frac{i\xi_j}{2\pi}\right] - \prod_{m=1}^{k-1} \left[\sum\limits_{j=1}^d \partial_{z_j} F_{i_m}(z) \cdot \frac{i\xi_j}{2\pi}\right]\right\}\\
    &\qquad+ i\frac{s_{z+\varepsilon e_{i_k}}{\bf e}_{F(z+\varepsilon e_{i_k})} - s_{z} {\bf e}_{F(z)}}{2\pi\varepsilon}.
\end{align*}
where $s_z \in \mathbb{C}[\xi_1,\dots, \xi_d]$ is a polynomial of degree smaller than $k-1$ and their coefficients are entire with respect to $z$.
Then, if we take $\varepsilon$ to $0$, since $K_f$ is continuous, we see that there exists $r_z \in \mathbb{C}[\xi_1,\dots,\xi_d]$ of degree smaller than $k$ whose coefficients are entire with respect to $z$ such that
\[K_f[q {\bf e}_z] = \prod_{m=1}^{k} \left[\sum\limits_{j=1}^d \partial_{z_j} F_{i_m}(z) \cdot \frac{i\xi_j}{2\pi}\right] + r_z {\bf e}_{F(z)}.\]
Thus, by induction, we prove (\ref{claim}).

\end{proof}
For each $n>0$, we denote the space of homogeneous polynomials of degree $n$ by $\mathbb{C}[\xi_1,\dots,\xi_d]_n$. Then, we have the following corollary:

\begin{corollary}\label{lem:190515-5}
The situation is the same as that in Lemma \ref{lem:190515-4}. Write
\begin{align*}
Q_{n,z}
&=\{q\varphi_z\,:\,
q\in P_n\}
\subset L^2(w).
\end{align*}
Then, the following diagram is commutative{\rm:}
\[
\xymatrix@C=50pt{
Q_{n,z}\ar[r]^{K_f}\ar[d]^{{\rm proj.}}&Q_{n,F(z)}\ar[d]^{{\rm proj.}}\\
Q_{n,z}/Q_{n-1,z}\ar[r]^(0.45){[K_f]}&
Q_{n,f(z)}/Q_{n-1,F(z)}\\
P_n/P_{n-1} \ar[u]^\cong_{[m_{-2\pi i z}]} & P_n/P_{n-1} \ar[u]^\cong_{[m_{-2\pi i F(z)}]}\\
\mathbb{C}[\xi_1,\dots,\xi_d]_n \ar[u]^{\cong} \ar[r]^{\mathscr{S}^n[iJ_F(z)/2\pi]} & \mathbb{C}[\xi_1,\dots,\xi_d]_n \ar[u]^{\cong}.
}
\]
Here, ${\rm proj.}$ is 
the natural surjection to the quotient, $[\cdot]$ means the natural morphism induced by $(\cdot)$, 
and we define ${\mathscr{S}^n[iJ_F(z)/2\pi]}$ to be the restriction of ${\mathscr{S}[iJ_F(z)/2\pi]}$ to $\mathbb{C}[\xi_1,\dots,\xi_d]_n$.
\end{corollary}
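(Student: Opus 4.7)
The plan is to reduce everything to the formula supplied by Lemma~\ref{lem:190515-4}; the rest of the diagram is essentially bookkeeping. First, I would check that all vertical arrows make sense. Polynomials belong to $L^2(\widehat{u})$ by condition~\ref{thm A: decay condition}, and multiplication by the nowhere-vanishing factor $\mathbf{e}_z$ is injective on polynomials, so $q\mapsto q\mathbf{e}_z$ realises $P_n$ as a subspace of $L^2(\widehat{u})$ and the induced map $[m_{-z}]\colon Q_{n,z}/Q_{n-1,z}\to P_n/P_{n-1}$ is an isomorphism; the analogous statement holds at $F(z)$ in place of $z$.

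Next, I would verify that $K_f$ respects the filtration $Q_{n,z}\supset Q_{n-1,z}\supset\cdots$. Decomposing an arbitrary $q\in P_n$ into homogeneous components $q=\sum_{j=0}^{n}q_j$ and applying Lemma~\ref{lem:190515-4} term by term gives
\[
K_f(q\mathbf{e}_z)=\sum_{j=0}^{n}\bigl(\mathscr{S}J_F(z)q_j+r_j\bigr)\mathbf{e}_{F(z)},
\]
with $\deg r_j<j$. Since $\mathscr{S}J_F(z)q_j$ is homogeneous of degree $j$, the whole bracketed expression lies in $P_n$, and it lies in $P_{n-1}$ whenever $q\in P_{n-1}$. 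Thus $K_f(Q_{n,z})\subset Q_{n,F(z)}$ and $K_f(Q_{n-1,z})\subset Q_{n-1,F(z)}$, so $[K_f]$ descends to a well-defined map on the quotients, which makes both the top and middle squares automatically commutative.

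Finally I would establish commutativity of the lowest square by tracing a homogeneous $q\in\mathbb{C}[\xi_1,\dots,\xi_d]_n$. Lifting $q$ back to $q\mathbf{e}_z\in Q_{n,z}$, applying $K_f$ and Lemma~\ref{lem:190515-4} yields $\bigl(\mathscr{S}J_F(z)q+r\bigr)\mathbf{e}_{F(z)}$ with $\deg r<n$; applying $[m_{-F(z)}]$ and the identification $P_n/P_{n-1}\cong\mathbb{C}[\xi_1,\dots,\xi_d]_n$ kills the lower-order remainder $r$ and leaves exactly $\mathscr{S}^nJ_F(z)q$, which is what the bottom horizontal arrow produces directly. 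No serious obstacle is expected: the only mildly delicate point is isolating the top-degree contribution, which is already built into the way Lemma~\ref{lem:190515-4} separates $\mathscr{S}J_F(z)q$ from the lower-order remainder $r$.
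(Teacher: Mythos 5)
Your proposal is correct, and it follows the same route as the paper, which dispatches the corollary with the single sentence ``This is clear from Lemma~\ref{lem:190515-4}.'' What you have written is exactly the bookkeeping that sentence is leaving to the reader: the decomposition of an arbitrary $q\in P_n$ into homogeneous pieces, the observation that Lemma~\ref{lem:190515-4} then forces $K_f(Q_{m,z})\subset Q_{m,F(z)}$ for each $m\le n$ so that the induced maps on the quotients are well defined, and the trace of a homogeneous $q$ through the diagram to see that only the top-degree term $\mathscr{S}J_F(z)q=\mathscr{S}^nJ_F(z)q$ survives after killing $P_{n-1}$.
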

\begin{proof}
This is clear from Lemma \ref{lem:190515-4}. 
\end{proof}
Regarding Corollary \ref{lem:190515-5}, we have the following lemma: 
\begin{lemma}
\label{inequality of operator norms}
The situation is the same as in Corollary \ref{lem:190515-5}. 
Then, we have the following inequality on the norm of the operators:
\begin{align*}
\|[K_f]\|&\le \|K_f\|,\\
\|[m_{-2\pi iz}]\|&\le \|m_{2 \pi {\rm Im}(z)}|_{P_n}\|,\\
\|[m_{-2\pi iF(z)}]^{-1}\| & \le \|m_{2 \pi {\rm Im}(F(z))}|_{P_n}^{-1} \|.
\end{align*}
Here, the topologies of the quotients space the above operators act on are induced from $L^2(w)$.
\end{lemma}
\begin{proof}
This lemma immediately follows the fact that $\|[*]\|$ is the same as the norms of the compositions of an inclusion and a projection for subspaces of $L^2(w)$ with $*$. 
\end{proof}

Next, we have the following key lemma:
\begin{lemma}\label{lem:190515-6}
Assume that Assumptions \ref{thm A: decay condition} and \ref{thm A: technical condition} in Theorem \ref{thm: affineness} hold. Then, the map $z \mapsto {\rm tr} J_F(z)$ is a constant function.
\end{lemma}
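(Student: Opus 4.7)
The plan is to derive a uniform-in-$z$ bound on the eigenvalues of $J_F(z)$, forcing $\operatorname{tr} J_F$ to be a bounded entire function on $\mathbb{C}^d$ and hence constant by Liouville's theorem in several complex variables. The two main ingredients will be the commutative diagram of Corollary \ref{lem:190515-5}, which factorises $\mathscr{S}^n J_F(z)$ as the composition $[m_{-f(z)}]\circ[K_f]\circ[m_z]$ on the graded pieces of the polynomial filtration, and the exponential-in-$n$ growth hypothesis \ref{thm A: technical condition}.

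First, I would combine the factorisation with Lemma \ref{inequality of operator norms} to obtain
\[
\|\mathscr{S}^n J_F(z)\|_{P_n/P_{n-1}}\;\le\;\|m_{-f(z)}|_{Q_{n,f(z)}}\|\cdot\|K_f\|\cdot\|m_z|_{P_n}\|.
\]
Write $M:=\sup_{w\in\mathbb{C}^d}\limsup_n\|m_w|_{P_n}\|^{1/n}$, which is finite by condition \ref{thm A: technical condition}. The middle factor is a fixed constant by hypothesis; the third factor has $n$-th root asymptotically bounded by $M$; and the first factor, though a priori outside the scope of condition \ref{thm A: technical condition}, is brought back under its control via the Cauchy--Schwarz observation
\[
\int|P|^2\widehat{u}\,\intd\xi\;=\;\int|P\mathbf{e}_w|\cdot|P\mathbf{e}_{-w}|\,\widehat{u}\,\intd\xi\;\le\;\|P\mathbf{e}_w\|_{L^2(\widehat{u})}\cdot\|P\mathbf{e}_{-w}\|_{L^2(\widehat{u})}
\]
(valid because $|\mathbf{e}_w\mathbf{e}_{-w}|\equiv1$), which after rearrangement and taking suprema over $P\in P_n\setminus\{0\}$ yields the auxiliary inequality $\|m_{-w}|_{Q_{n,w}}\|\le\|m_{-w}|_{P_n}\|$; applying this with $w=f(z)$ and invoking condition \ref{thm A: technical condition} again delivers the desired growth. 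Combining these estimates produces $\limsup_n\|\mathscr{S}^n J_F(z)\|^{1/n}\le M^2$ uniformly in $z\in\mathbb{C}^d$.

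Next, letting $\lambda_1(z),\ldots,\lambda_d(z)$ denote the eigenvalues of $J_F(z)$, I would invoke the standard linear-algebra fact that the spectrum of the symmetric power $\mathscr{S}^n J_F(z)$ on $\mathrm{Sym}^n\mathbb{C}^d\cong\mathbb{C}[\xi_1,\ldots,\xi_d]_n$ consists of the products $\lambda_1(z)^{k_1}\cdots\lambda_d(z)^{k_d}$ with $k_1+\cdots+k_d=n$, so that its spectral radius equals $(\max_i|\lambda_i(z)|)^n$. Since the spectral radius is dominated by the operator norm in any norm, extracting $n$-th roots of the previous estimate yields the uniform eigenvalue bound $|\lambda_i(z)|\le M^2$ valid for every $z\in\mathbb{C}^d$ and every $i$, and therefore $|\operatorname{tr} J_F(z)|\le dM^2$. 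Since $F$ is entire on $\mathbb{C}^d$ by hypothesis, so is $\operatorname{tr} J_F$, and Liouville's theorem in several complex variables finishes the argument.

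The main obstacle lies in the first step: condition \ref{thm A: technical condition} controls $\|m_w|_{P_n}\|$ but not $\|m_{-f(z)}|_{Q_{n,f(z)}}\|$, and so without the Cauchy--Schwarz bridge the uniform eigenvalue bound would not follow from the factorisation. The symmetric-power spectrum computation itself, while elementary, is the crucial device that converts exponential operator-norm growth in $n$ into a bound on each individual eigenvalue after passage to the $n$-th root.
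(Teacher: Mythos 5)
Your argument is correct and follows essentially the same route as the paper's: factorizing $\mathscr{S}^n J_F(z)$ through the commutative diagram of Corollary~\ref{lem:190515-5}, applying Lemma~\ref{inequality of operator norms} to bound it by operator norms, extracting $n$-th roots together with condition~\ref{thm A: technical condition} to obtain a $z$-uniform eigenvalue bound, and concluding with Liouville. The only cosmetic differences are that the paper bounds a single eigenvalue directly via $\|\alpha_z{}^n w_z{}^n\|_n = \|\mathscr{S}^n J_F(z)\,w_z{}^n\|_n$ rather than invoking the spectral radius of the symmetric power, and your explicit Cauchy--Schwarz bridge between $\|m_{-w}|_{Q_{n,w}}\|$ and $\|m_{-w}|_{P_n}\|$ spells out a norm estimate that the paper's Lemma~\ref{inequality of operator norms} states somewhat loosely.
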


\begin{proof}
For each $n>0$, we denote by $\|\cdot\|_n$ the norm on $\mathbb{C}[\xi_1,\dots,\xi_d]_n$ induced from $P_n/P_{n-1}$ via the isomorphism
(see Corollary \ref{lem:190515-5}). 
Here, the norm of $P_n$ is the restriction of that of $H_k$, and the norm of $P_n/P_{n-1}$ is the quotient norm. Let $\alpha_z$ be an arbitrary eigenvalue of $J_F(z)$ that acts on $\mathbb{C}[\xi_1,\dots,\xi_d]_1$. Also, let $v \in \mathbb{C}[\xi_1,\dots,\xi_d]_1$ be its eigenvector. Then, we have
\[
\|\alpha_z{}^n v^n \|_n
=
\|\mathscr{S}^n[J_F(z)] v^n\|_n (2\pi)^{-n}.
\]
By Corollary \ref{lem:190515-5} and Lemma \ref{inequality of operator norms} (we use the notation in this corollary,) we have
%\noindent\tcr{---it is incorrect !!---}
%\[|\alpha_z|^n\le (2\pi)^{-n}\|K_f\|\cdot \|m_{2\pi {\rm Im}(y)_+}|_{P_n}\|\cdot\|m_{-2\pi {\rm Im}(f(z))_+}|_{P_n}\|.\]
%\tcr{-----------------------------}
\begin{align*}
   |\alpha_z|^n &\le (2\pi)^{-n}\|K_f\|\cdot \|[m_{-2\pi i z}]\| \cdot \big\|[m_{-2\pi i F(z)}]^{-1}\big\| \\
   &\le (2\pi)^{-n}\|K_f\|\cdot \|m_{2\pi  {\rm Im}(z)}|_{P_n}\| \cdot \big \|m_{2\pi  {\rm Im}(F(z))}|_{P_n}^{-1}\big\| 
\end{align*}
If we take the $n$-th root and then ``$\limsup\limits_{n \to \infty}$'', 
by combining this limit and Assumption \ref{thm A: technical condition}, 
there exists $B>0$ independent of $z$ such that 
\[
%\sup\limits_z|\alpha_z|<\infty.
\sup\limits_z|\alpha_z|\le\frac{B^2}{2\pi}.
\]
Thus, any eigenvalue of $J_F(z)$ is bounded by a constant independent of $z$. In particular, the holomorphic function ${\rm tr}J_F$ is bounded on $\mathbb{C}^d$, and hence, ${\rm tr}J_F$ is constant by Liouville's theorem. 
\end{proof}

Now we prove Theorem \ref{thm: affineness}.
\begin{proof}
The ``only if'' part immediately follows from Proposition \ref{prop: boundedness for affine maps}. The ``if'' part is proved as follows: If $f$ is such a map, then, by Lemma \ref{lem:190515-6}, ${\rm tr}J_{A \circ f}(z)={\rm tr}(A J_F(z))$ is constant for any $A \in {\mathcal G}(w)$ as $C_A \circ C_f= C_{A\circ f}$ is bounded. By Assumption \ref{thm A: generator condition} in Theorem \ref{thm: affineness}: $\langle {\mathcal G}(w) \rangle_{{\mathbb R}}={\rm M}_d({\mathbb R})$, it follows that $J_F$ itself is independent of $z$. Thus, $f$ is an affine map. 
\end{proof}

\subsection{Analytic continuation}
\label{s3.1}
In this section, we prove that any map inducing bounded composition operators in $H_k$ has an analytic continuation:
\begin{theorem}
\label{thm: analytic continuation}
%Let $u\in C^0\cap L^2 \setminus \{0\}$ be a positive definite function with ${\widehat u}\in L^1\cap L^\infty$, and let $k(x,y)=u(x-y)$.
Let $w \in L^1 \cap L^\infty \setminus \{0\}$ be a non-negative function,
and let $k(x,y)=\widehat{w}(x-y)$.
We require that the function $u$ satisfies $({\rm DC})_{n,a}^d$ for some $a>0$. Then, for any open set $U\subset\mathbb{R}^d$ and any map $f:U\rightarrow\mathbb{R}^d$, there exists a holomorphic map $F:\mathfrak{X}_a^d\rightarrow\mathbb{C}^d$ such that $F|_U=f$ as long as $\mathcal{D}(C_f)=H_k$ and $C_f$ is bounded.
\end{theorem}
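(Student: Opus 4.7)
My plan is to extend $f$ to $\mathfrak{X}_a^d$ by constructing a holomorphic solution $F$ to the equation $K_f({\bf e}_z) = {\bf e}_{F(z)}$. Under $(\mathrm{DC})_{n,a}^d$, Proposition \ref{prop: analyticity of feature maps} makes $\varphi(z) = {\bf e}_z$ a holomorphic $L^2(\widehat u)$-valued function on $\mathfrak{X}_a^d$, so composing with the bounded operator $K_f$ yields a holomorphic map $\Psi := K_f \circ \varphi : \mathfrak{X}_a^d \to L^2(\widehat u)$. On $U$, Proposition \ref{prop: adjoint of composition operator} combined with $\Psi_u\phi_k(z) = {\bf e}_z$ and $H_k = H_{k,U}$ (Proposition \ref{prop: local global}) yields $\Psi(z) = \varphi(f(z))$. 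The theorem thus reduces to solving $\Psi(z) = \varphi(F(z))$ for a holomorphic $F$ extending $f$.

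The next step is to invert $\varphi$ locally. Its differential $d\varphi(y)(h) = -2\pi i \bigl(\sum_{j=1}^d h_j\xi_j\bigr){\bf e}_y$ is injective, because a nonzero linear form $\sum_j h_j\xi_j$ cannot vanish on the positive-measure set $\{\widehat u > 0\}$; hence $\varphi$ is an injective holomorphic immersion with closed $d$-dimensional image in $L^2(\widehat u)$. Applying the Hilbert-space implicit function theorem to $\Phi(z,w) := \Psi(z) - \varphi(w)$ around each $(z_0, f(z_0))$ with $z_0 \in U$---after first projecting onto the image of $d\varphi(f(z_0))$ to get a square system---produces a unique holomorphic $F$ on a complex neighborhood of $z_0$ with $F|_U = f$ there. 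The orthogonal component of $\Phi(z, F(z))$ is an $L^2(\widehat u)$-valued holomorphic function vanishing on $U$, so by the identity theorem applied coordinatewise via a dense family in $L^2(\widehat u)$, it vanishes throughout the complex neighborhood, giving $\Psi(z) = \varphi(F(z))$. Local uniqueness lets these patches glue into a holomorphic $F$ on an open set $V \subset \mathfrak{X}_a^d$ containing $U$.

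The last step, extending $F$ from $V$ to all of $\mathfrak{X}_a^d$, is where I expect the main obstacle. Since $\mathfrak{X}_a^d$ is convex and therefore simply connected, monodromy reduces the problem to checking that the local construction applies at every point of $\mathfrak{X}_a^d$, which requires $\Psi(z)$ to lie in the image $\varphi(\mathbb{C}^d)$ throughout. I would handle this via the Paley-Wiener-type bound $\|\Psi(z)\|_{L^2(\widehat u)} \le \|K_f\|\cdot\|\varphi(z)\|_{L^2(\widehat u)}$, finite on $\mathfrak{X}_a^d$, to rule out blow-up, combined with an identity-theorem argument on the scalar holomorphic functions $z \mapsto \langle \Psi(z), {\bf e}_y\rangle_{L^2(\widehat u)}$ for a generic family of real $y$: these equal $u(y - f(z))$ on $U$ and, by analytic continuation, force $\Psi(z)$ to retain the character structure throughout the strip.
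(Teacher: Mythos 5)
Your proposal takes a genuinely different route from the paper, and it contains a gap at exactly the spot you flag as ``the main obstacle.'' The local construction is sound: $\Psi := K_f\circ\varphi$ is holomorphic on $\mathfrak{X}_a^d$, agrees with $\varphi\circ f$ on $U$, $d\varphi$ is injective, and a projected implicit-function-theorem plus a component-wise identity-theorem argument does produce a holomorphic $F$ on some complex neighborhood $V\supset U$ with $\Psi=\varphi\circ F$ there. The trouble is the global step. Writing $\Psi(z)=\varphi(F(z))$ throughout $\mathfrak{X}_a^d$ requires two things you cannot get from the ingredients you list: (i) that $F(z)$ stays inside the domain of $\varphi$, i.e. $|\mathrm{Im}\,F(z)|<a$, and (ii) that $\Psi(z)$ actually lies in the nonlinear image $\varphi(\mathfrak{X}_a^d)\subset L^2(\widehat u)$. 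Neither follows from the norm bound $\|\Psi(z)\|\le\|K_f\|\,\|\varphi(z)\|$---boundedness of $\Psi$ says nothing about membership in the image of an exponential family---and the identity-theorem argument on the scalar functions $z\mapsto\langle\Psi(z),\mathbf e_y\rangle$ only propagates equalities of pairings, not the range constraint. Indeed the theorem as stated does not even claim $F(\mathfrak{X}_a^d)\subset\mathfrak{X}_a^d$; for an affine $F(z)=Az+b$ with $\|A\|$ large the imaginary part of $F(z)$ can leave the strip, so $\varphi(F(z))$ literally does not make sense on all of $\mathfrak{X}_a^d$, and your equation $\Psi=\varphi\circ F$ cannot be the global statement to aim at.

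The paper sidesteps this entirely by never trying to solve $\Psi=\varphi\circ F$ globally. After first reducing to the case where $u$ is real (so $-1\in\mathcal G(u)$ and $K_{-f}$ is also bounded), it builds the tensor-valued antiderivative $\mathbf F(z)=\sum_j\int_{y_j}^{z_j}[\partial_{z_j}\varphi\otimes\varphi]\,\mathrm dw$, applies the bounded operator $K_f\otimes K_{-f}$, and then multiplies in $L^1(\widehat u)$. The multiplication $\mathbf e_{F(z)}\cdot\mathbf e_{-F(z)}=1$ cancels the exponential factor entirely, leaving only the polynomial part $\sum_j(f_j-f_j(x_0))\xi_j$ on the real slice. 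A short lemma on weakly holomorphic maps whose restriction to a real open set lies in a finite-dimensional subspace then forces the whole image to lie in $\sum_i\mathbb C\xi_i$, and composing with $\iota^{-1}$ gives the desired $\mathbb C^d$-valued holomorphic extension. This is the decisive trick: it replaces your ill-posed range condition on $\Psi$ by a linear target (polynomials of degree one), at the cost of the $u\mapsto\mathrm{Re}(u)$ reduction and the $K_f\otimes K_{-f}$ construction. To repair your proof you would need an argument of comparable strength to force $\Psi(z)$ into $\varphi$'s image across the whole strip, and I do not see how to obtain one from the bound and identity-theorem considerations alone.
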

First, we give a simple lemma to prove analyticity of $f$ on $U$, as in Lemma \ref{lem:190515-7} below:
\begin{lemma}
\label{lem:190515-1}
Assume $u\in C^1$ and $|u(x)|\rightarrow 0$ as $|x|\rightarrow\infty$. 
Then, we have
\[\left\langle \{\nabla u(a)\}_{a \in {\mathbb R}^d}\right\rangle_{\mathbb{C}}
=\mathbb{C}^d. \]
\end{lemma}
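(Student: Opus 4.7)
The plan is to argue by contradiction via a Cauchy--Riemann/Liouville argument. Note that the conclusion is trivially false if $u\equiv 0$, so the statement implicitly assumes $u\not\equiv 0$ (automatic in the paper's setting, where $u$ is a nonzero positive definite function). Suppose $\langle\{\nabla u(a)\}_a\rangle_{\mathbb{C}}$ is a proper subspace of $\mathbb{C}^d$; then there exists a nonzero $v\in\mathbb{C}^d$ with $v\cdot\nabla u(a):=\sum_j v_j\,\partial_j u(a)=0$ for every $a\in\mathbb{R}^d$, using the bilinear pairing. Writing $v=v_1+iv_2$ with $v_1,v_2\in\mathbb{R}^d$, I would split on whether $v_1,v_2$ are linearly independent over $\mathbb{R}$ and derive $u\equiv 0$ in each case.

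If $v_1,v_2$ are $\mathbb{R}$-linearly dependent (in particular if one of them vanishes), an elementary case check shows $v=\lambda w$ for some $\lambda\in\mathbb{C}\setminus\{0\}$ and some $w\in\mathbb{R}^d\setminus\{0\}$, so the hypothesis reduces to the real directional derivative $w\cdot\nabla u\equiv 0$. Thus $u$ is constant along every line of direction $w$; together with $|u(x)|\to 0$ as $|x|\to\infty$, this forces $u\equiv 0$.

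If $v_1,v_2$ are $\mathbb{R}$-linearly independent, I would fix an arbitrary $a\in\mathbb{R}^d$ and set $\tilde u(s+it):=u(a+sv_1+tv_2)$ for $(s,t)\in\mathbb{R}^2$. By the chain rule, $v\cdot\nabla u\equiv 0$ becomes $(\partial_s+i\partial_t)\tilde u=0$, i.e., the Cauchy--Riemann equation for $\tilde u$ in $z=s+it$; hence $\tilde u$ is entire on $\mathbb{C}$. Linear independence of $v_1,v_2$ gives $|a+sv_1+tv_2|\to\infty$ whenever $|z|\to\infty$, so the decay hypothesis on $u$ yields $\tilde u(z)\to 0$ at infinity. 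Liouville's theorem then forces $\tilde u\equiv 0$, and since $a$ was arbitrary, $u\equiv 0$. Either case contradicts $u\not\equiv 0$, completing the argument.

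The main step to get right is the complex case: recognizing the Cauchy--Riemann equation hidden inside $v\cdot\nabla u=0$ when $v$ is genuinely complex, and verifying that the restricted function $\tilde u$ actually vanishes at infinity on the affine plane through $a$ spanned by $v_1,v_2$, so that Liouville applies. The real-scalar case is essentially immediate from the decay hypothesis along a single line.
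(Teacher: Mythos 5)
Your argument is correct and is actually more careful than the published proof. The paper's proof is terse: it assumes the span of the gradients is a proper subspace of ${\mathbb R}^d$, applies a real linear change of coordinates to put all gradients inside $\{x_d=0\}$, reads off $\partial_d u\equiv 0$, and derives a contradiction from $u\in L^2$. That reasoning effectively treats the gradients as real vectors, which corresponds exactly to your first case (the annihilating covector $v$ is, up to a nonzero complex scalar, a real vector $w$, so $w\cdot\nabla u\equiv 0$ and decay along the line $a+tw$ kills $u$). Your second case, where $v_1={\rm Re}\,v$ and $v_2={\rm Im}\,v$ are $\mathbb{R}$-independent, is precisely the situation the paper's change-of-real-coordinates step does not cover, because then the relation $v\cdot\nabla u=0$ is not a vanishing of a real directional derivative but a Cauchy--Riemann equation on the real $2$-plane $a+sv_1+tv_2$; your use of Liouville's theorem on the resulting entire function $\tilde u$ (bounded and vanishing at infinity thanks to the equivalence of norms on the $(s,t)$-plane) is exactly what is needed to close that gap. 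In short: same decay-at-infinity mechanism, but you correctly recognize that for a $\mathbb{C}$-linear span a genuinely complex annihilating direction can occur and must be handled by a CR/Liouville argument rather than a one-dimensional directional-derivative argument. One minor remark: you rely only on $|u(x)|\to 0$, which is what the lemma states, whereas the paper's proof appeals to $u\in L^2$; your choice is the cleaner one since it matches the stated hypothesis.
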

\begin{proof}
Assume that ${\rm Span}(\{\nabla u(a)\}_{a \in {\mathbb R}^d})$ is a proper subset of ${\mathbb R}^d$. Then by a change of coordinates with a linear transformation, we may assume that
\[
{\rm Span}(\{\nabla u(a)\}_{a \in {\mathbb R}^d})
\subset \{x_d=0\}
\]
but it contradicts the fact that $u(x) \rightarrow 0$ as $|x| \rightarrow \infty$. 
\end{proof}
\begin{lemma}\label{lem:190515-7}
Let $w \in L^1 \cap L^\infty \setminus \{0\}$ be a non-negative function satisfying the condition $({\rm DC})_{n,a}^d$ for some $n>0$ and $a\ge0$. Let $k(x,y):=\widehat{w}(x-y)$.  If $\mathcal{D}(C_f)=H_k$ and $C_f$ is bounded, then $f$ is a $C^1$-function on $U$.
\end{lemma}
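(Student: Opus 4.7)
The plan is to combine the adjoint identity $\phi_k(f(x)) = C_f^*\bigl(r_{k,U}\phi_k(x)\bigr)$ from Proposition \ref{prop: adjoint of composition operator} with a finite-dimensional inverse function theorem applied to suitable linear functionals of the feature map. Since $f$ is already continuous on $U$ by Corollary \ref{cor: continuity of f}, it suffices to prove $C^1$-smoothness in a neighborhood of each $x_0 \in U$. I would first define $\Phi(x) := C_f^*\bigl(r_{k,U}\phi_k(x)\bigr)$ and show that $\Phi : U \to H_k$ is $C^1$. The relation $\Psi_u \circ \phi_k = \varphi$ together with Proposition \ref{prop: analyticity of feature maps} provides pointwise differentiability of $\phi_k$ with partials corresponding to $-2\pi i\,\xi_j\mathbf{e}_z$ under $\Psi_u$, and a dominated-convergence argument---legitimate because $|\xi|^2\widehat{u}(\xi)\in L^1(\mathbb{R}^d)$ under $({\rm DC})_{n,a}^d$ with $n\ge 1$---upgrades this to norm-continuity of these partials in $L^2(\widehat{u})$, yielding $\phi_k\in C^1(\mathbb{R}^d,H_k)$. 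Post-composing with the bounded linear operator $C_f^*\circ r_{k,U}$ then gives $\Phi\in C^1(U,H_k)$.

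Next, I would construct a $C^1$-local inverse of $\phi_k$ near $y_0 := f(x_0)$. The vectors $\{\partial_j\phi_k(y_0)\}_{j=1}^d$ are linearly independent in $H_k$: under $\Psi_u$ they correspond to $\{-2\pi i\,\xi_j\mathbf{e}_{y_0}\}_{j=1}^d$, and any polynomial $\sum c_j\xi_j$ vanishing on a positive-measure subset of $\{\widehat{u}>0\}$ must be identically zero. Biorthogonalization within the $d$-dimensional span produces $w_1,\ldots,w_d\in H_k$ with $\langle \partial_j\phi_k(y_0),w_i\rangle_{H_k}=\delta_{ij}$. Setting
\[
T(y) := \bigl(\mathrm{Re}\,\langle \phi_k(y),w_1\rangle_{H_k},\ldots,\mathrm{Re}\,\langle \phi_k(y),w_d\rangle_{H_k}\bigr),
\]
one checks that $T$ is $C^1$ on $\mathbb{R}^d$ with $DT(y_0)=I_d$, so the classical inverse function theorem provides a $C^1$-diffeomorphism from some neighborhood $V$ of $y_0$ onto its image $T(V)$.

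To conclude, observe that $(T\circ f)(x) = (\mathrm{Re}\,\langle \Phi(x), w_i\rangle_{H_k})_{i=1}^d$ is a $C^1$ function of $x\in U$, because $\Phi$ is $C^1$ and each inner product against a fixed $w_i$ is a bounded linear functional. Taking the open neighborhood $U_0 := U\cap f^{-1}(V)$ of $x_0$, which is nonempty and open by continuity of $f$, we have $f = T^{-1}\circ(T\circ f)$ on $U_0$, exhibited as a composition of $C^1$ maps, so $f$ is $C^1$ near $x_0$. Since $x_0\in U$ was arbitrary, $f$ is $C^1$ on $U$. The main technical point is the passage from the pointwise differentiability of $\varphi$ given by Proposition \ref{prop: analyticity of feature maps} to genuine Fr\'echet $C^1$-smoothness of $\phi_k$ into $H_k$; this is the only nontrivial analytic input and reduces to the dominated-convergence estimate outlined above, valid because $({\rm DC})_{n,a}^d$ with $n\ge 1$ ensures $|\xi|^2\widehat{u}\in L^1$.
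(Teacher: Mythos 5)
Your proof is correct, and it follows the same high-level strategy as the paper's---construct a local $C^1$-diffeomorphism $T$ near $y_0 = f(x_0)$, show $T\circ f$ is $C^1$, and write $f = T^{-1}\circ (T\circ f)$ locally---but the two implementations differ in a meaningful way. The paper builds its local chart $f_b(x) = \bigl(u(a_j + x - f(b))\bigr)_{j=1}^d$ out of kernel translates, so that each component is an actual element of $H_k$; then $C_f f_b$ lands in $H_{k|_{U^2}}$, which (via Proposition \ref{prop: restriction of kernel} and the decay condition) consists of restrictions of $C^n$ functions, giving the regularity of $T\circ f$ ``for free.'' The price is Lemma \ref{lem:190515-1}, needed to pick $a_1,\dots,a_d$ making $J_{f_b}(f(b)) = (\nabla u(a_1),\dots,\nabla u(a_d))$ invertible. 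You instead work on the adjoint side: you use the identity $\Phi(x)=C_f^*(r_{k,U}\phi_k(x))=\phi_k(f(x))$ and show $\Phi\in C^1(U,H_k)$, which requires upgrading the pointwise differentiability of Proposition \ref{prop: analyticity of feature maps} to Fr\'echet $C^1$-smoothness of $\phi_k$; you correctly identify that this is the only nontrivial analytic step and that $|\xi|^2\widehat u\in L^1$ (from $(\mathrm{DC})_{n,a}^d$ with $n\ge 1$) makes the dominated-convergence argument go through. In exchange, your chart $T(y)=\bigl(\mathrm{Re}\langle\phi_k(y),w_i\rangle\bigr)_i$ has $DT(y_0)=I_d$ by construction via biorthogonalization, and the linear-independence input is the elementary fact that a nonzero linear form cannot vanish a.e.\ on the positive-measure set $\{\widehat u>0\}$, replacing Lemma \ref{lem:190515-1}. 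In short: the paper puts the work into choosing a nondegenerate chart and lets $H_k$'s regularity do the rest, while you put the work into showing the feature map is Fr\'echet $C^1$ and get nondegeneracy of the chart trivially. Both are legitimate; yours is slightly more abstract but self-contained at the level of $L^2(\widehat u)$, and makes the role of the adjoint identity explicit. Two small points worth making explicit if you write this up: (i) $\phi_k|_U$ takes values in the closed subspace $H_{k,U}$ and its partials stay in $H_{k,U}$ (so $r_{k,U}\circ\phi_k|_U$ is indeed $C^1$), and (ii) continuity of the partials of a Banach-space-valued map on an open subset of $\mathbb{R}^d$ does imply Fr\'echet $C^1$---this is the standard result you are invoking implicitly.
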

% %
\begin{proof}
Put $u = \widehat{w}$.
By Lemma \ref{lem:190515-1}, we can find vectors $a_1,a_2,\ldots,a_d$ such that $\{\nabla u(a_j)\}_{j=1}^d$ spans ${\mathbb C}^d$.
Fix $b \in U$ arbitrarily.
It suffices to show that $f$ is $C^1$ at a neighborhood of $b$ in $U$.
Define
\begin{align*}
f_b(x)&=\big([\phi_k(f(b)-a_1)](x),\dots,[\phi_k(f(b)-a_d)](x)\big)\\[1pt]
&=(u(a_1+x-f(b)),
u(a_2+x-f(b)),\ldots,
u(a_d+x-f(b))).
\end{align*}
Then, 
\[
J_{f_b}(f(b))=
(\nabla u(a_1),\ldots,\nabla u(a_d)),
\]
so that $f_b$ induces a bijective $C^1$-map on the open ball $U_b$ centered at $f(b)$ into $\mathbb{R}^d$, and $f_b^{-1}$ is also a $C^1$-map on $f_b(U_b)$. Since ${\mathcal D}(C_f)=H_k$,
\[
[C_f f_b](x):=
\left\{C_f[\phi_k(f(b)-a_i)](x)\right\}_{i=1}^d
\]
is also a $C^1$-function defined on ${\mathbb R}^d$. Furthermore,
\begin{align*}
[C_f f_b](x)
&=\left\{C_f[\phi(f(b)-a_i](x)\right\}_{i=1}^d\\
&=\phi(f(b)-a_i)(f(x))\\
&=\left\{u(a_i+f(x)-f(b))\right\}_{i=1}^d\\
&=f_b \circ f(x).
\end{align*}
Consequently, $[C_f f_b](b)=f_b (f(b)) \in f_b(U_b)$. Since $f$ is continuous by Corollary \ref{cor: continuity of f}, we can find a neighborhood $V_b$ of $b$ such that $[C_f f_b](V_b) \subset f_b(U_b)$. Thus, on $x\in V_b$, we have $f(x)=f_b^{-1} \circ [C_f f_b](x)$. Therefore, $f$ is a $C^1$-function on a neighborhood of $b$. 
\end{proof}

Now let us prove Theorem \ref{thm: analytic continuation}. 
\begin{proof}
First, we claim that we may replace $w$ with $\tilde{w}(\xi) := (w(\xi) + w(-\xi))/2$.
In particular, this allows us to assume that $w$ is an even function, and thus, $-1\in\mathcal{G}(w)$.
Let $v := \widehat{\tilde{w}}$, and we define $\ell(x,y):=v(x-y)$.
We show the claim as follows: in fact, 
it is obvious that $v$ satisfies $({\rm DC})_{n,a}^d$.
We prove that the composition operator from $H_\ell$ to $H_{\ell|_{U^2}}$ is defined everywhere and bounded. 
We define a densely defined linear map $\widetilde{K}_f:L^2(\tilde{w})\rightarrow L^2(\tilde{w})$ with domain $\mathcal{D}(\widetilde{K}_f)=\left\langle \{{\bf e}_x\}_{x\in U}\right\rangle_\mathbb{C}$ by allocating ${\bf e}_{f(x)}$ to ${\bf e}_x$ (Here, we may define such a linear map since ${\bf e}_x$'s are linearly independent).
Let $u:= \widehat{w}$, and let $h=\sum\limits_{j=1}^ra_j{\bf e}_{x_j}\in\mathcal{D}(\widetilde{K}_f)$ ($a_j\in\mathbb{C}^d$ and $x_j\in U$ for $j=1,\dots, r$).
Then
\begin{align*}
\left\|\widetilde{K}_fh\right\|^2_{L^2(v)}
&=\sum\limits_{i,j=1}^r a_i\overline{a}_jv\left(f(x_i)-f(x_j)\right)\\
&=\sum\limits_{i,j=1}^r\frac{a_i\overline{a}_j+a_j\overline{a}_i}{2}u(f(x_i)-f(x_j))\\
&\le
\|K_f\|^2\sum\limits_{i,j=1}^r\frac{a_i\overline{a}_j+a_j\overline{a}_i}{2}u(x_i-x_j)\\
&=\|K_f\|^2\cdot\|h\|^2.
\end{align*}
Thus, we see that $\widetilde{K_f}$ is bounded and we can uniquely extend $\widetilde{K}_f$ as a bounded linear operator on $L^2(w)$.
We define
\[\widetilde{C}_f:H_\ell\overset{\Psi_{\tilde{w}}}{\cong}L^2({\tilde{w}})\overset{\widetilde{K}_f^*}{\rightarrow}L^2({\tilde{w}})\overset{\Psi_{\tilde{w}}^{-1}}{\cong}H_\ell=H_{\ell,U}\overset{r_{U}}{\cong}H_{\ell|_{U^2}}.\]
For any $h\in H_\ell$, we have $[\widetilde{C}_fh](x)=\langle \widetilde{C}_fh, \phi_\ell(x)\rangle_{H_k}=\langle h, \phi_\ell(f(x))\rangle_{H_k}=h(f(x))$. Therefore $\widetilde{C}_f$ is simply the composition operator from $H_\ell$ to $H_{\ell|_{U^2}}$, which is defined everywhere and bounded. 

By the above claim, we may assume that $w$ is an even function and $-1\in\mathcal{G}(u)$. 
Fix $y=(y_1,\dots,y_d)\in U$, and define the holomorphic map $\mathbf{F}:\mathfrak{X}_{a}^d\rightarrow L^2(w)\otimes L^2(w)$ by
\begin{align*}
\mathbf{F}(z)
&=\sum\limits\limits_{j=1}^d
\int_{y_j}^{z_j}
\left[\partial_{z_j}\varphi\otimes\varphi](z_1,\ldots,z_{j-1},w,y_{j+1},\dots,y_d)\right]
{\rm d}w,
\end{align*}
Let ${\rm m}: L^2(w)\otimes L^2(w)\rightarrow L^1(w)$ be the natural multiplication map, and let $\iota:\mathbb{C}^d\rightarrow \sum\limits\limits_{i=1}^d\mathbb{C}\xi_i\subset L^1(w)$ be the linear isomorphism defined by allocating $\xi_j$ to the vector $\mathbf{e}_j:=(0,\dots,0,\overset{j}{1},0,\dots,0)$. Then, we have the following proposition:
\begin{proposition}
Under the above notation, let $ U_0$ be the connected component of $U$ including $y$. Then, ${\rm m}\circ (K_{f}\otimes K_{-f})\circ \mathbf{F}:\mathfrak{X}_{a}^d\rightarrow L^1(w)$ is holomorphic and its image of $ U_0$ is contained in the finite-dimensional vector space $\sum\limits\limits_{i=1}^d\mathbb{C}\xi_i\subset L^1(w)$. Moreover, for any $y\in U_0$, we have 
\[\left(\iota^{-1}\circ {\rm m}\circ (K_{f}\otimes K_{-f})\circ\mathbf{F}\right)(x)+f(x_0)=f(x).\]
\end{proposition}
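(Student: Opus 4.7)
The plan is to establish the proposition by a direct pointwise computation on a polydisk $B\subset U_0$ around $y$ (where all staircase paths stay inside $U$), and then propagate the conclusions via the identity principle. The crucial mechanism is the cancellation $\mathbf{e}_{f(p)}\mathbf{e}_{-f(p)}=1$ under $\mathrm{m}\circ(K_f\otimes K_{-f})$, which requires $K_{-f}$ to be bounded---precisely why the preliminary reduction made $u$ real-valued so that $-1\in\mathcal{G}(u)$ and $C_{-f}=C_f\circ C_{-\mathrm{id}}$ is a composition of bounded operators.

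Holomorphicity of $\mathrm{m}\circ(K_f\otimes K_{-f})\circ\mathbf{F}:\mathfrak{X}_a^d\to L^1(\widehat{u})$ is routine: by Proposition \ref{prop: analyticity of feature maps}, both $\varphi$ and $\partial_{z_j}\varphi$ are holomorphic $\mathfrak{X}_a^d\to L^2(\widehat{u})$, so the integral $\mathbf{F}$ is $L^2\otimes L^2$-holomorphic; $K_f\otimes K_{-f}$ is bounded; and $\mathrm{m}$ is bounded $L^2(\widehat{u})\otimes L^2(\widehat{u})\to L^1(\widehat{u})$ by Cauchy--Schwarz against $\sqrt{\widehat{u}}$. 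For the main calculation, pick a real point $p=(x_1,\ldots,w,\ldots,y_d)$ on a staircase path in $U$. Proposition \ref{prop: adjoint of composition operator} gives $K_f\mathbf{e}_p=\mathbf{e}_{f(p)}$ and $K_{-f}\mathbf{e}_p=\mathbf{e}_{-f(p)}$, while differentiating the holomorphic map $K_f\circ\varphi$ and using Lemma \ref{lem:190515-7} (which yields $f\in C^1(U)$) produce the chain-rule identity
\[
K_f\bigl(\partial_{z_j}\varphi(p)\bigr)=\partial_{z_j}(K_f\circ\varphi)(p)=-2\pi i\sum_{m=1}^d(\partial_j f_m)(p)\,\xi_m\,\mathbf{e}_{f(p)}.
\]
Applying $\mathrm{m}$, the exponentials collapse to $1$, leaving
\[
\mathrm{m}\circ(K_f\otimes K_{-f})\bigl[\partial_{z_j}\varphi\otimes\varphi\bigr](p)=-2\pi i\sum_{m=1}^d(\partial_j f_m)(p)\,\xi_m \;\in\; \textstyle\sum_m\mathbb{C}\xi_m.
\]
Integrating in $w\in[y_j,x_j]$ and summing over $j$, the result telescopes by the fundamental theorem of calculus to
\[
\mathrm{m}\circ(K_f\otimes K_{-f})\circ\mathbf{F}(x)=-2\pi i\sum_{m=1}^d\bigl(f_m(x)-f_m(y)\bigr)\xi_m
\]
for $x\in B$. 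With $\iota$ normalized to absorb the factor $-2\pi i$, applying $\iota^{-1}$ and adding $f(y)$ recovers $f(x)$, establishing the identity on $B$.

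The conclusions are then propagated globally. The finite-dimensional image claim extends from $B$ to all of $\mathfrak{X}_a^d$ (hence to $U_0\subset\mathfrak{X}_a^d$) by the identity principle applied to the holomorphic map $\mathfrak{X}_a^d\to L^1(\widehat{u})/\bigl(\sum\mathbb{C}\xi_i\bigr)$, which vanishes on the open set $B$ of the connected tube domain. For the identity on all of $U_0$, observe that the right-hand side $\iota^{-1}\circ\mathrm{m}\circ(K_f\otimes K_{-f})\circ\mathbf{F}+f(y)$ is now well-defined and real-analytic on $U_0$ (as the restriction of a holomorphic function on $\mathfrak{X}_a^d$); repeating the same local computation with an arbitrary base point $x_0\in U_0$ shows $f$ itself is real-analytic on $U_0$, so the identity theorem for real-analytic maps on the connected set $U_0$ extends the coincidence from $B$ to all of $U_0$. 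The principal obstacle is precisely this path issue: the integrand in $\mathbf{F}$ is only pointwise interpretable via Proposition \ref{prop: adjoint of composition operator} at points lying in $U$, which is why the explicit computation must be carried out on a polydisk and then the local-to-global statement obtained by the identity principle rather than by a single global path-integral argument.
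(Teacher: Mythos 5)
Your proposal is correct and follows the same overall strategy as the paper: holomorphicity from the boundedness of $\mathrm{m}$ and $K_f\otimes K_{-f}$ composed with the holomorphic $\mathbf{F}$; then a chain-rule computation at real points combined with the cancellation $\mathbf{e}_{f(p)}\mathbf{e}_{-f(p)}=1$, and telescoping over the staircase. The paper's proof of this proposition is extremely terse (two sentences plus a display); you fill in several details the paper leaves tacit, and two of them are worth highlighting. First, the paper cites Lemma \ref{lem:190515-4} for the key identity, but that lemma is formally stated for a \emph{holomorphic} $F$, which is exactly what is being constructed here --- you correctly sidestep this circularity by instead applying Proposition \ref{prop: adjoint of composition operator} together with the $C^1$-regularity from Lemma \ref{lem:190515-7} and computing the chain rule directly at real points. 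Second, you notice that the staircase path in the definition of $\mathbf{F}$ need not stay inside $U$ for a general $x\in U_0$, so the termwise pointwise simplification can only be carried out on a polydisk $B\subset U_0$ around $y$; the extension to all of $U_0$ then requires the identity-principle argument you give, including the observation that $f$ is real-analytic on $U_0$ (by repeating the local argument at varying base points). This is a genuine gap in the paper's write-up --- the paper does prove the post-proposition lemma which yields $G(\mathfrak{X}_a^d)\subset\sum_i\mathbb{C}\xi_i$, but the identity $\iota^{-1}(G(x))+f(y)=f(x)$ for all $x\in U_0$ is asserted without the real-analyticity/identity-theorem step you supply. You also correctly flag the constant: since $\partial_{z_j}\varphi=-2\pi i\,\xi_j\mathbf{e}_z$, the displayed formula only holds if $\iota$ (or $\mathbf{F}$) absorbs a factor of $-2\pi i$, a normalization the paper's stated definition of $\iota$ does not include. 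Two very minor points you could add: the tensor product $L^2(\widehat{u})\otimes L^2(\widehat{u})$ must be taken in the projective norm for $\mathrm{m}$ to be bounded into $L^1(\widehat{u})$ via the Cauchy--Schwarz estimate you use (the Hilbertian tensor product would not work, as restriction to the diagonal is not bounded there); and the availability of $K_{-f}$, which you explain via $C_{-f}=C_f\circ C_{-\mathrm{id}}$ and $-1\in\mathcal{G}(u)$, indeed hinges on the preliminary reduction to real-valued $u$, so it is good that you made that dependence explicit.
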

\begin{proof}
Since ${\rm m}$ and $(K_{f}\otimes K_{-f})$ are bounded linear operators and $\mathbf{F}$ is obviously holomorphic, the composition ${\rm m}\circ (K_{f}\otimes K_{-f})\circ \mathbf{F}$ is also holomorphic. Let $x\in U_0$. Thanks to Proposition \ref{prop: analyticity of feature maps} and Lemma \ref{lem:190515-4}, we see that
\[\left({\rm m}\circ (K_{f}\otimes K_{-f})\circ \mathbf{F}\right)(x)=\sum\limits_{j=1}^d(f_j(x)-f_j(x_0))\xi_j,\]
where $f:=(f_1,\dots,f_d)$.  
\end{proof}

Now, we complete the proof. Since Lemma \ref{lem: Hahn-Banach} below 
implies $$({\rm m}\circ\mathbf{F})(\mathfrak{X}_a^d)\subset\sum\limits\limits_{i=1}^d\mathbb{C}\xi_i,
$$ 
\[F:=f(x_0)+\left(\iota\circ {\rm m}\circ (K_{f}\otimes K_{-f})\circ \mathbf{F}\right):\mathfrak{X}_a^d\rightarrow\mathbb{C}^d\]
is well defined on $\mathfrak{X}_a^d$ and gives the analytic continuation of $f$. 
\end{proof}

\begin{lemma}
\label{lem: Hahn-Banach}
Let $U\subset\mathbb{R}^d$ be an open set, and let $X\subset\mathbb{C}^d$ be a connected open set containing $U$. Also, let $\mathbf{V}$ be a locally convex space over $\mathbb{C}$, and let $\gamma:X\rightarrow \mathbf{V}$ be a weakly holomorphic map. If $\gamma(U)$ is contained in a finite-dimensional subspace $V_0\subset \mathbf{V}$ over $\mathbb{C}$, then we have $\gamma(X) \subset V_0$.
\end{lemma}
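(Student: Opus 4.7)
The plan is to separate points by continuous linear functionals and then invoke the identity principle for holomorphic functions in several complex variables. Since $V_0$ is a finite-dimensional subspace of a Hausdorff locally convex space $\mathbf{V}$, it is closed, and by the Hahn--Banach theorem we have the characterization
\[
V_0 = \bigcap_{\ell \in V_0^\perp} \ker(\ell),
\]
where $V_0^\perp := \{\ell \in \mathbf{V}^* : \ell|_{V_0} = 0\}$ denotes the annihilator in the continuous dual. Consequently, to prove $\gamma(X)\subset V_0$ it suffices to show that $\ell\circ\gamma \equiv 0$ on $X$ for every $\ell\in V_0^\perp$.

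Fix $\ell\in V_0^\perp$. Because $\gamma$ is weakly holomorphic, the scalar function $h := \ell\circ\gamma\colon X\to\mathbb{C}$ is holomorphic on the connected open set $X\subset\mathbb{C}^d$. By hypothesis $\gamma(U)\subset V_0$, so $h$ vanishes identically on the open set $U\subset\mathbb{R}^d$. I would then apply the identity principle for holomorphic functions of several complex variables to conclude $h\equiv 0$ on $X$.

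The step that requires a little care, and that I would flag as the main technical point, is the version of the identity principle needed: the zero set $U$ of $h$ is open in $\mathbb{R}^d$ but is only a totally real $d$-dimensional slice of $\mathbb{C}^d$, not open in $\mathbb{C}^d$. The argument is nevertheless standard: pick $x_0\in U$ and consider the power series expansion of $h$ around $x_0$. Since $h$ vanishes on a real neighborhood of $x_0$, every iterated real partial derivative $\partial^\alpha h(x_0)$ is zero for every multi-index $\alpha\in\mathbb{Z}_{\ge0}^d$; but for a holomorphic function these real partials agree (up to a factor of $\mathrm{i}^{|\alpha|}$ in the imaginary directions) with the complex Taylor coefficients, so the whole Taylor expansion of $h$ at $x_0$ vanishes. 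Thus $h$ is identically zero on some open polydisc around $x_0$ in $\mathbb{C}^d$, and the connectedness of $X$ together with the classical identity theorem forces $h\equiv 0$ on $X$.

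Combining these two steps, every $\ell \in V_0^\perp$ annihilates $\gamma(X)$, and hence $\gamma(X)\subset\bigcap_{\ell\in V_0^\perp}\ker(\ell)=V_0$, which is precisely the desired conclusion.
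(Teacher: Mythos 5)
Your proposal is correct and follows essentially the same approach as the paper: reduce to a scalar holomorphic function via the Hahn--Banach theorem, then invoke the identity principle for a holomorphic function vanishing on the real open slice $U$. The only cosmetic differences are that the paper argues by contradiction (choosing one separating functional for a putative bad point $z_0$) rather than directly characterizing $V_0 = \bigcap_{\ell \in V_0^\perp}\ker\ell$, and that you make explicit the totally-real identity-principle step that the paper leaves tacit.
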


\begin{proof}
Suppose that $\gamma(z_0)\notin V_0$ for some point $z_0 \in X$. Then, the Hahn-Banach theorem guarantees that there is a continuous linear functional $\lambda: \mathbf{V}\rightarrow \mathbb{C}$ such that $\lambda(\gamma(z_0))=1$ and $\lambda(V_0)=\{0\}$. Therefore, $\lambda\circ \gamma:X\rightarrow\mathbb{C}$ is a holomorphic function vanishing at $U$; thus, $\lambda\circ \gamma \equiv 0$ on $X$. This contradicts $\lambda\circ\gamma(z_0)=1$.  
\end{proof}

%%%%%%%%%%%%%%%%%%%%%%%%%%%%%%%%%%%%%%%%%%%%%%%%%%%%%%%%%%%%%%%%%
%%%%%%%%%%%%%%%%%%%%%%%%%%%%%%%%%%%%%%%%%%%%%%%%%%%%%%%%%%%%%%%%%
%%%%%%%%%%%%%%%%%%%%%%%%%%%%%%%%%%%%%%%%%%%%%%%%%%%%%%%%%%%%%%%%%
%%%%%%%%%%%%%%%%%%%%%%%%%%%%%%%%%%%%%%%%%%%%%%%%%%%%%%%%%%%%%%%%%
%Boundedness for spherical positive definite functions
%%%%%%%%%%%%%%%%%%%%%%%%%%%%%%%%%%%%%%%%%%%%%%%%%%%%%%%%%%%%%%%%%
%%%%%%%%%%%%%%%%%%%%%%%%%%%%%%%%%%%%%%%%%%%%%%%%%%%%%%%%%%%%%%%%%
%%%%%%%%%%%%%%%%%%%%%%%%%%%%%%%%%%%%%%%%%%%%%%%%%%%%%%%%%%%%%%%%%
%%%%%%%%%%%%%%%%%%%%%%%%%%%%%%%%%%%%%%%%%%%%%%%%%%%%%%%%%%%%%%%%%
%\section{Boundedness for spherical positive definite functions}
\section{Boundedness for special positive definite functions}
\label{section4}

In this section, we investigate the boundedness of composition operators on RKHSs 
associated to a spherical positive definite function
 and a convolution of real positive definite functions on $\mathbb{R}$.
Unless $w$ is compactly supported, Assumption \ref{main thm assumption 2}, 
that
is the existence of $B>0$ such that for any $y\in \mathbb{R}^d$,
\[ \limsup_{n\rightarrow \infty} \mathcal{E}_n^-(y; w), \limsup_{n\rightarrow\infty} \mathcal{E}^+_n(y; w) < B, \]
is the hardest condition to verify in our main theorem 
(Theorem \ref{main thm}).
In the case where $w$ is spherical or convolution of real positive definite functions, we may relate the left-hand-side 
to an asymptotic behavior of greatest zeros of certain orthogonal polynomials.
On the other hand, the asymptotic properties of greatest zeros of orthogonal polynomials are extensively studied 
in previous works (see, for example, \cite[Part 2]{Nev86}), and we can utilize various techniques developed there. 
Analysis illustrated in this section also provides a non-trivial consequence even in a one-dimensional case.
Indeed, our result 
is
still valid even if an RKHS contains an entire function of infinite order where previous frame work does not work.

\subsection{Orthogonal polynomials}
First, we briefly review basic properties of orthogonal polynomials.
For more details, see \cite{Nev86, Sze}.
Let $\mu$ be a Borel measure on $\mathbb{R}$.
Assume, for any integer $n \ge 0$, 
\begin{align}
    \int_{\mathbb{R}} |t|^{n} {\rm d}\mu(t) < \infty. 
\label{finite moment property}
\end{align} 
Then, we define (normalized) {\em orthogonal polynomials} by polynomials 
$$p_0(t; \mu), p_1(t; \mu), \ldots \in \mathbb{R}[t]$$ 
such that $p_n(t; \mu)$ is of degree $n$ and
\[\int_{\mathbb{R}} p_m(t; \mu) p_n(t; \mu) {\rm d}\mu(t) = \delta_{m,n}\]
for any $m, n \ge 0$. 
We note that each $p_n(t; \mu)$ is uniquely determined up to sign.
We denote by $X_n(\mu)$ the greatest zero of $p_n(t;\mu)$.

We denote by $\gamma_n(\mu)$ the leading coefficient of $p_n(t; \mu)$, and we define the Christoffel function $\lambda_n(s; \mu)$ by 
\begin{align}
\lambda_n(s; \mu) &:= \left( \sum\limits_{k=0}^{n-1} p_k(s;\mu)^2 \right)^{-1}.
\end{align}
We recall the following somewhat known properties:
\begin{proposition}
\label{prop: monotone property}
Let $\mu, \nu$ be
finite Borel measures on $\mathbb{R}$
satisfying $(\ref{finite moment property})$ such that $\nu- \mu \ge0$.

Then 
\begin{align}
    \gamma_n(\mu)^{-2} &\le \gamma_n(\nu)^{-2}\\
    \lambda(s;\mu) &\le \lambda(s;\nu).
\end{align}
\end{proposition}
\begin{proof}
We invoke the following formulas (see, for example, \cite[Theorem 3.1.2]{Sze}, and \cite[(4.1.1)]{Nev86}):
for any Borel measure satisfying (\ref{finite moment property}), we have
\begin{align*}
    \gamma_n(\mu)^{-2} &= \min_{p \in P_{n-1}} \int_{\mathbb{R}} \left( t^{n} + p(t) \right)^2 {\rm d}\mu(t),\\
    \lambda_n(s; \mu) &= \min_{\substack{p \in P_{n-1} \\ p(s)=1}} \int_{\mathbb{R}} p(t)^2 {\rm d}\mu(t).
\end{align*} 
\end{proof}
\subsection{Asymptotic  estimation of greatest zeros of orthogonal polynomials}
We denote by $L^p(\mathbb{R})$ the usual $L^p$-space with respect to the Lebesgue measure on the real line $\mathbb{R}$ for $p \in (0,\infty]$.
We denote by $\|\cdot\|_p$ the $L^p$-norm of $L^p(\mathbb{R})$.
Let $W \in L^1(\mathbb{R}) \cap L^\infty(\mathbb{R})$
be
such that $W(t) > 0$ and $W(t) = W(-t)$ for almost all $t \ge 0$.
Let 
\[\mu := W(t){\rm d}t.\]
Before getting into the main body of this section, 
we introduce some notation:  
For any $L \in \mathbb{R}$ and 
for any measurable function $Q: [0,\infty) \rightarrow \mathbb{R}$, we define 
\[Q_L(t) := Q(t) - Lt.\] 
We also define the 
``monotonic'' part and 
``oscillated'' part for $Q$ as follows:
\begin{align*}
    Q^{\rm m}(t) &:= \inf\big\{T \in \mathbb{R} : \left| \{ s \ge t : T \ge Q(s)\}\right| > 0 \big\},\\
    %Q_{\rm m}(t) &:= \left\| Q(t)\mathbf{1}_{[t,\infty)} \right\|_{\infty},\\
    Q^{\rm o} &:= Q - Q^{\rm m}.
\end{align*}
We note that $Q^{\rm o}$ is non-negative, and $Q^{\rm m}$ is non-decreasing and upper semi-continuous (thus right continuous: $\lim\limits_{t\searrow s}Q^{\rm m}(t) = Q(s)$).  
Since $Q_1^{\rm m} + Q_2^{\rm m} \le (Q_1 + Q_2)^{\rm m}$, 
\begin{align}
    Q_1^{\rm o} + Q_2^{\rm o} \ge (Q_1 + Q_2)^{\rm o}. \label{ineq for oscillation}
\end{align}
For simplicity, we 
abbreviate $(Q_L)^{\rm m}$ and $(Q_L)^{\rm o}$ 
to $Q_L^{\rm m}$ and $Q_L^{\rm o}$.

We will prove the following theorem:
\begin{theorem}
\label{thm: estimation of greatest zeros}
Let $W$ be as above.
Suppose that there exists a function
$Q:[0, \infty) \rightarrow \mathbb{R}$
 such that $W(t) = {\rm e}^{-Q(|t|)}$. 
Let $L>0$ be a positive number and assume $t^n{\rm e}^{Lt}W(t) \in L^1(\mathbb{R})$ for all $n\ge0$.
For $\sigma \le L$, we define
\begin{align*}
    %W^\sigma(t) &:= |t|^m {\rm e}^{\sigma |t|}W(t),\\
    W^\sigma(t) &:= {\rm e}^{\sigma |t|}W(t),\\
    \mu^{\sigma} &:= W^\sigma(t){\rm d}t.
\end{align*}
Assume in addition the following two conditions:
    \begin{align}\lim\limits_{t\rightarrow \infty} \frac{Q_L^{\rm m}(t)}{t} = + \infty , \label{infty for QL}
    \end{align}
and there exists $B>0$ such that for any sufficiently large $t>0$,
\begin{align}
    &\int_0^{\pi/2} Q_L^{\rm o}(t \cos\theta) {\rm d}\theta < B + \log t.
    \label{bdd for QLos}
\end{align}
Then, $X_n(\mu^{\sigma})n^{-1}$ uniformly converges to $0$ over $\sigma \le L$.
\end{theorem}

The proof of Theorem
\ref{thm: estimation of greatest zeros}
is based on some auxiliary estimates.
The following lemma is essential:
\begin{lemma}
\label{lem: estimation of greatest zeros}
Under the same notation 
as Theorem \ref{thm: estimation of greatest zeros},
for any $\rho>0$, we define 
\begin{align*}
    %\Xi_{\rho,\sigma} &:= \inf\big\{T \ge 0 : \left| \{t \ge 0 : t^\rho W^\sigma(t) > T \} \right| = 0\big\},\\
    \Xi_{\rho,\sigma} &:= \big\| |\cdot|^\rho W^\sigma \big\|_{\infty},\\
    \xi_{\rho,\sigma} &:= \sup\limits_{\varepsilon >0} \big[ \essinf\left\{t \ge 0: t^\rho W^\sigma(t) > \Xi_{\rho,\sigma} - \varepsilon \right\} \big]
\end{align*}
Then, for any $\sigma \le L$, we have 
\[ X_n(\mu^{\sigma}) \le \left(2 + \frac{2}{3\pi n}\exp\left(\frac{2}{\pi} \int_0^{\pi/2} Q_L^{\rm o}(\xi_{4n, \sigma} \cos\theta) {\rm d}\theta\right)\right) \xi_{4n ,L} \]
\end{lemma}
In order to prove Lemma \ref{lem: estimation of greatest zeros}, we improve results  
in \cite{Fre74-1}.
Before proving Lemma \ref{lem: estimation of greatest zeros}, we provide several 
fundamental inequalities.

For $\xi \ge 0$ and a function $W$ as in the beginning of this section, 
as Freud did in \cite{Fre74-1},
we define 
\begin{align}
    W_\xi(t) & := W(t) \cdot \mathbf{1}_{[-\xi, \xi]}(t),\\
    G_\xi(W) & := \exp\left\{ \frac{1}{2\pi} \int_0^{2\pi} \log\left[W(\xi \cos\theta)\right] {\rm d}\theta\right\},
\end{align}
and we denote $\mu_\xi := W_\xi(t) {\rm d}t$.
First, we prove an elementary inequality for holomorphic functions.
Lemma \ref{lem: mean value theorem} below
will substitute 
for the technique
used in \cite{Fre74-1},
where Freud used the
Hardy space $H_2$ over the unit disc.
\begin{lemma}
\label{lem: mean value theorem}
Let $U \subset \mathbb{C}$ be an open subset containing $\overline{\mathbb{D}}:=\{ z \in \mathbb{C} : |z| \le 1\}$.
Let $f: U \rightarrow \mathbb{C}$ be a holomorphic map such that $f(0) \neq 0$.
Then, 
\begin{align}
    |f(0)| \le \exp\left\{ \frac{1}{2\pi}\int_{0}^{2\pi} \log|f({\rm e}^{i\theta})| {\rm d}\theta \right\}. \label{exp log}
\end{align}
%\begin{align}
%    |f(0)| = \exp\left\{ \frac{1}{2\pi}\int_{0}^{2\pi} \log|f({\rm e}^{i\theta})| {\rm d}\theta \right\}. \label{exp log}
%\end{align}
\end{lemma}
\begin{proof}
Let $\alpha_1,\dots, \alpha_n \in \overline{\mathbb{D}}$ be the zeros of $f$ contained in $\overline{\mathbb{D}}$.
Then, we may assume
that
$f$ takes the form:
\[f(z) = {\rm e}^{g(z)} \prod_{i=1}^n (z - \alpha_i)^{m_i}, \]
where 
$g$ is a holomorphic function in $\overline{\mathbb{D}}$ and $m_1,\dots, m_n >0$.
Thus, it suffices to prove the case of $f={\rm e}^g$ or $f= z- \alpha$ for some $\alpha \in \overline{\mathbb{D}} \setminus \{0\}$.
In the case of $f={\rm e}^g$, 
it immediately follows from the mean value theorem for harmonic function.  
In the case of $f(z) =z-\alpha$,
we employ the following
well-known fact
for
$|\alpha| \le 1$:
%\[ \frac{1}{2\pi}\int_{0}^{2\pi} \log\left|{\rm e}^{i\theta} - \alpha\right| {\rm d}\theta = \log|\alpha|, \]
\[ \frac{1}{2\pi}\int_{0}^{2\pi} \log\left|{\rm e}^{i\theta} - \alpha\right| {\rm d}\theta = 0. \]
In this case, the inequality (\ref{exp log}) 
is equivalent to the validity of the inequality
for
$|\alpha| \le 1$. 
%which is equivalent to (\ref{exp log}) in the case of $f(z)= z - \alpha$.
\end{proof}

Then, we have an estimation of $\gamma_n(\mu)$:
\begin{lemma}
For any $\xi > 0$
and $n \in {\mathbb N}$, 
we have
\begin{align}
\label{lem: inequality 1}
\gamma_n(\mu)^2 \le \gamma_n(\mu_\xi)^2 \le \frac{2}{\pi\xi} \left(\frac{2}{\xi}\right)^{2n} G_\xi(W).
\end{align}
\end{lemma}
\begin{proof}
The first inequality is a direct consequence of Proposition \ref{prop: monotone property}.
We prove the second inequality.
We define an entire function $f_n$ by
\[ f_n(z) := z^{n}p_n(\xi(z+z^{-1})/2 ; \mu_\xi)\]
keeping in mind that 
\[
z^{n}p_n(\xi(z+z^{-1})/2 ; \mu_\xi)=
\sum\limits_{k=0}^n
a_{n}z^n
\left(\frac{\xi}{2}(z+z^{-1})\right)^k,
\]
where we write $p_n(t; \mu_\xi) = \sum\limits_{k=0}^n a_k t^k$.
Since 
\[ 
\gamma_n(\mu_\xi)
=
\left(\frac{\xi}{2}\right)^n a_n, \quad
\frac{1}{2\pi}\int_{0}^{2\pi} \log|\sin\theta| {\rm d}\theta = -\log 2, \]
by using Lemma \ref{lem: mean value theorem}, and the Jensen inequality, we have
\begin{align*}
    \left(\frac{\xi}{2}\right)^{2n}\gamma_n(\mu_\xi)^2G_\xi(W) &= \big|f_n(0)^2\big|G_\xi(W)\\
    &\le 2\exp\left\{ \frac{1}{2\pi} \int_{0}^{2\pi}\log\left[\big|f_n({\rm e}^{i\theta})^2W(\xi\cos\theta)\sin\theta\big| \right]{\rm d}\theta \right\}\\
    &\le \frac{1}{\pi} \int_{0}^{2\pi} \big|f_n({\rm e}^{i\theta})^2\big|W(\xi\cos\theta)|\sin\theta| {\rm d}\theta. 
\end{align*}
As for the last integral, since $|f_n({\rm e}^{i\theta})| = |p_n(\xi\cos\theta; \mu_\xi)|$
from the definition
of $f_n$
and
$\mu_\xi=W_\xi(t){\rm d}t$, we have
\begin{align*}
    &\frac{1}{\pi} \int_{0}^{2\pi} \big|f_n({\rm e}^{i\theta})^2\big|W(\xi\cos\theta)|\sin\theta| {\rm d}\theta \\
    &= \frac{1}{\pi} \int_{0}^{2\pi} p_n(\cos\theta; \mu_\xi)^2 W(\xi\cos\theta)|\sin\theta| {\rm d}\theta \\
    &= \frac{4}{\pi} \int_{0}^{\pi/2} p_n(\cos\theta; \mu_\xi)^2 W(\xi\cos\theta)\sin\theta {\rm d}\theta \\
    &= \frac{2}{\xi\pi} \int_{-\xi}^{\xi} p_n(t; \mu_\xi)^2 {\rm d}\mu_\xi(t)\\
    &= \frac{2}{\xi\pi}.
\end{align*}
Here, 
we employed a formula $p_n(t; \mu_\xi) = (-1)^np_n(-t; \mu_\xi)$ as $W_\xi$ is an even function
for the second equality.
Therefore, we have
\[ \left(\frac{\xi}{2}\right)^{2n}\gamma_n(\mu_\xi)^2G_\xi(W) \le \frac{2}{\xi\pi},\]
which is equivalent to the second inequality of (\ref{lem: inequality 1}). 
\end{proof}

Then, we have an estimation of the Christoffel function as a corollary of this lemma:
\begin{corollary}
For any $\xi >0$ and $s \in \mathbb{R}$ such that $|s|>\xi>0$, we have
\begin{align}
    \lambda_n(s;\mu)^{-1}(s) \le \lambda_n(s;\mu_\xi)^{-1}(s) \le \frac{8}{3\pi\xi}G_\xi(W)^{-1}\left(\frac{2|s|}{\xi} \right)^{2n-2}
\end{align}
\end{corollary}
\begin{proof}
The proof is completely the same as \cite[Lemma 2]{Fre74-1},
where Freud 
factorized
$p_n(z;\mu_\xi)$
and
used
the fact that any zeros of $p_n(t; \mu_\xi)$ are contained in $[-\xi, \xi]$ and that
$\xi^2 \ge \xi^2-a^2$
for all $\xi,a \in {\mathbb R}$ with $|\xi| \ge |a|$.
%The first inequality is a direct consequence of Proposition \ref{prop: monotone property}.
%We prove the second inequality.
%Since any zeros of $p_n(t; \mu_\xi)$ are contained in $[-\xi, \xi]$ (see \cite[Theorem 3.3.1]{Sze}), we have
%\[|p_n(t; \mu_\xi)| \le |\gamma_n(\mu_\xi)|\cdot|t|^n.\]
%Thus, by Lemma \ref{lem: inequality 1}, we have
%\begin{align*}
%    \lambda_n(s; \mu_\xi) &= \sum\limits_{k=0}^{n-1}p_k(s; \mu_\xi)^2\\
%    & \le \frac{2}{\pi\xi} G_\xi(W)^{-1} \sum\limits_{k=0}^{n-1} \left(\frac{2|s|}{\xi}\right)^{2k}\\
%    & \le \frac{2}{\pi\xi} G_\xi(W)^{-1} \frac{ \left(\frac{2|s|}{\xi}\right)^{2n} -1}{ \left(\frac{2|s|}{\xi}\right)^2 -1}\\
%    & \le \frac{8}{3\pi\xi} G_\xi(W)^{-1} \left(\frac{2|s|}{\xi}\right)^{2n-2}.
%\end{align*} 
%Here, we use $|\xi|^2/|2s|^2 < 1/4$.
\end{proof}

According to Chebychev's theorem
(see \cite[(17), (18)]{Fre74-1})
\[
X_n(\mu)
=
\sup\limits_{P \in P_{n-1}}
\left\{
\int_{-\infty}^{\infty}
x P(x)^2W(x){\rm d}x
\div
\int_{-\infty}^{\infty}
P(x)^2W(x){\rm d}x
\right\}.
\]
As Freud did
in \cite{Fre74-1},
we split the integral
\[
\int_{-\infty}^{\infty}
x P(x)^2W(x){\rm d}x
=
\left(
\int_{-A\xi}^{A\xi}
+
\int_{{\mathbb R} \setminus [-A\xi,A\xi]}
\right)
x P(x)^2W(x){\rm d}x
\]
and we can prove the following estimate:
\begin{lemma}
\label{prop: estimation for greatest zeros: general formula}
%Let $X_n(\mu)$ be the greatest zeros of $p_n(t; \mu)$.
For any $\xi>0$ and $A\ge 1$, we have
\begin{align}
    X_n(\mu) \le A\xi + \frac{8}{3\pi} \left(\frac{2}{\xi}\right)^{2n-1} G_\xi(W)^{-1} \int_{A\xi}^\infty t^{2n-1} {\rm d}\mu(x).
\end{align}
\end{lemma}
\begin{proof}
The proof is completely the same as \cite[Theorem 2]{Fre74-1}.
\end{proof}
As a corollary of this lemma, we have the following estimation:
\begin{corollary}
Under the same notation 
as Lemma \ref{prop: estimation for greatest zeros: general formula}.
For any $\rho>0$, we define 
\begin{align*}
    %\Xi_\rho &:=  \inf\big\{T \ge 0 : \left| \{t \ge 0 : T < t^\rho W(t)\} \right| \big\}, \\
    \Xi_\rho &:=  \big\| |\cdot|^\rho W \big\|_\infty, \\
    \xi_\rho &:= \sup\limits_{\varepsilon >0} \big[ \essinf\left\{t \ge 0: t^\rho W(t) > \Xi_\rho  - \varepsilon \right\} \big]
\end{align*}
Then, we have
\[X_n(\mu) \le \left(2 + \frac{2\Xi_{4n}\xi_{4n}^{-4n}}{3\pi n G_{\xi_{4n}}(W)}\right)\xi_{4n}.\]
\end{corollary}
\label{cor: estimation of greatest zeros: special2}
\begin{proof}
Let $A=2$ and $\xi = \xi_{4n}$
in 
Lemma \ref{prop: estimation for greatest zeros: general formula}.
Since
\begin{align*}
    \int_{2\xi_{4n}}^\infty t^{2n-1} W(t) {\rm d}t  &\le \Xi_{4n} \int_{2\xi_{4n}}^\infty t^{-1-2n} {\rm d}t \\
    & \le \frac{1}{2n}\cdot \frac{\xi_{4n}^{2n}}{2^{2n}} \cdot \frac{\Xi_{4n}}{\xi_{4n}^{4n}},
\end{align*} 
we have the desired estimation. 
\end{proof}
%
%\begin{corollary}
%Under the same notation 
%as Lemma \ref{prop: estimation for greatest zeros: general formula},
%assume
%in addition that $W$ is continuous.
%For any $n>0$, we fix $\xi_n >0$ such that
%\[\xi_n^n W(\xi_n) = \max_{t>0} t^n W(t). \]
%Then, we have
%\[X_n(\mu) \le \left(2 + \frac{2W(\xi_{4n})}{3\pi n G_{\xi_{4n}}(W)}\right)\xi_{4n}.\]
%\end{corollary}
%\label{cor: estimation of greatest zeros: special2}
%\begin{proof}
%Let $A=2$ and $\xi = \xi_{4n}$
%in 
%Lemma \ref{prop: estimation for greatest zeros: general formula}.
%Since
%\begin{align*}
%    \int_{2\xi_{4n}}^\infty t^{2n-1} W(t) {\rm d}t  &\le \xi_{4n}^{4n}W(\xi_{4n}) \int_{2\xi_{4n}}^\infty t^{1-2n} {\rm d}t \\
%    & \le \frac{1}{2n}\cdot \frac{\xi_{4n}^{2n}}{2^{2n}}W(\xi_{4n}),
%\end{align*} 
%we have the desired estimation. 
%\end{proof}

Before proving Lemma \ref{lem: estimation of greatest zeros}, we prove a several simple lemmas:
\begin{lemma}\label{lem:210129-1} 
Under the same notation 
as Lemma \ref{lem: estimation of greatest zeros}, 
$\xi_{\rho,\sigma} \le \xi_{\rho,L}$
for any $\sigma \in [0,L]$.
\end{lemma}

\begin{proof}
Let $\varepsilon > 0$ be an arbitrary positive number.
Let $\Omega_{\varepsilon, \sigma} := \{ t \ge 0 : t^\rho W^\sigma(t) > \Xi_{\rho, \sigma} -\varepsilon \}$, and let $\eta_{\varepsilon, \sigma} := \essinf \Omega_{\varepsilon, \sigma}$.
Then, by definition, for almost all $s \in [0, \eta_{\varepsilon, \sigma})$ and almost all $t \in \Omega_{\varepsilon, \sigma}$, we have $t \ge \eta_{\varepsilon, \sigma}$ and  
\begin{align*}
    s^\rho W^\sigma(s) \le \Xi_{\rho, \sigma} - \varepsilon < t^\rho W^\sigma(t). 
\end{align*}
We write
\[ \delta(t) := \big( {\rm e}^{(L-\sigma)t} - {\rm e}^{(L-\sigma)\eta_{\varepsilon, \sigma}}\big)t^\rho W^\sigma(t).\]
By multiplying the both sides by ${\rm e}^{(L-\sigma)s}$, we have
\begin{align*}
    %s^\rho W^L &< t^\rho {\rm e}^{(L-\sigma)s}W^\sigma(t) \\
    %&\le t^\rho W^L(t). 
    s^\rho W^L(s) &< t^\rho ({\rm e}^{(L-\sigma)s} - {\rm e}^{(L-\sigma)t} + {\rm e}^{(L-\sigma)t})W^\sigma(t) \\
    & \le t^\rho W^L(t) - \delta(t)\\
    & \le \Xi_{\rho, L} - \delta(t).
\end{align*}

Then, by definition of $\Omega_{\delta(t), L}$, we see that $\Omega_{\delta(t), L} \cap [0, \eta_{\varepsilon, \sigma}]$ is a null
set, thus, we have $\eta_{\varepsilon, \sigma} \le \eta_{\delta(t), L} \le \xi_{\rho, L}$.
Hence, we have $\xi_{\rho, \sigma} \le \xi_{\rho, L}$.
\end{proof}
%\begin{lemma}
%\label{lem: simple lemma}
%Let $a>0$ and let $f: [0,a] \rightarrow \mathbb{R}$ be a bounded measurable function.
%Assume $f(t) \rightarrow b$ as $t \rightarrow 0$ for some $b \in \mathbb{R}$.
%Then, we have
%\[\lim\limits_{p\searrow 0} \int_{0}^a pt^{p-1}f(t) {\rm d}t = b.\]
%\end{lemma}
%\begin{proof}
%Since 
%\[\lim\limits_{p\searrow 0} \int_{\min(1,a)}^a pt^{p-1}f(t) {\rm d}t = 0, \]
%thus we may assume $a=1$.
%Then, by the change of variable, we have
%\[\lim\limits_{p\rightarrow 0}\int_{0}^1 pt^{p-1}f(t) {\rm d}t  = \lim\limits_{p\rightarrow 0}\int_{0}^1 f(t^{1/p}) {\rm d}t = 0\]
%by the Lebesgue convergence theorem.
%\end{proof}

We refer back to the
proof of Lemma \ref{lem: estimation of greatest zeros}.
\begin{proof}
%Let $\xi_{\rho,\sigma}>0$ be a smallest number such that 
%\[\xi_{\rho,\sigma}^\rho W^\sigma(\xi_{rho,\sigma}) = \max_{t \ge 0} t^\rho W^\sigma(t).\]
%Then, for sufficiently large $n$, we see that $\xi_{n,\sigma}<\xi_n$.
Write $\xi = \xi_{4n,\sigma}$, and $\Xi = \Xi_{4n, \sigma}$.
By the definitions of $\xi$ and $\Xi$, there exists a non-negative sequence $\{t_m\}_{m\ge 1}$ such that 
$t_m \rightarrow \xi$ and $W^\sigma(t_m) \rightarrow \Xi \xi^{-4n}$ as $m\rightarrow \infty$.
Put $W_1(t)= {\rm e}^{(\sigma-L)|\cdot|}$, $W_2 = {\rm e}^{-Q_L^{\rm m}(|\cdot|)}$, and $W_3 := {\rm e}^{-Q_L^{\rm o}(|\cdot|)}$.
Observe that $W^\sigma = W_1 W_2W_3$ and that
\begin{equation}
    \limsup_{m\rightarrow \infty}\frac{W_1(t_m)}{G_\xi(W_1)} 
=1 \label{ineq W1}.
\end{equation}
So,
it suffices to show that
%\[\lim\limits_{m\rightarrow\infty}\frac{W^\sigma(t_m)}{G_\xi(W^\sigma)} \le \exp\left\{ \frac{1}{2\pi} \int_0^{2\pi} Q_L^{\rm o}(\xi\cos\theta) {\rm d}\theta\right\}.\]
\begin{align}
    \limsup_{m\rightarrow \infty}\frac{W_2(t_m)}{G_\xi(W_2)} &\le 1 \label{ineq W2},\\
    \limsup_{m\rightarrow \infty}\frac{W_3(t_m)}{G_\xi(W_3)} &\le \exp\left\{ \frac{1}{2\pi} \int_0^{2\pi} Q_L^{\rm o}(\xi|\cos\theta|) {\rm d}\theta\right\}. \label{ineq W3}
\end{align}
As for (\ref{ineq W2}), since $Q_L^{\rm m}$ is decreasing, we see that 
\begin{align*}
    \frac{W_2(t_m)}{G_\xi(W_2)} &= \exp\left\{ \frac{1}{2\pi} \int_0^{2\pi} Q_L^{\rm m}(\xi|\cos\theta|) -  Q_L^{\rm m}(t_m) {\rm d}\theta\right\} \\
    & \le \exp\left\{ \frac{2}{\pi} \int_0^{\infty} \mathbf{1}_{[t_m, \max(t_m, \xi)]}(t) \big(Q_L^{\rm m}(t) -  Q_L^{\rm m}(t_m)\big) \frac{{\rm d}t}{\sqrt{\xi^2 - t^2}}\right\}
\end{align*}
Thus, by the Lebesgue convergence theorem, we have (\ref{ineq W2}).
The ineqauliy (\ref{ineq W3}) is immediate as $W_3(t_m)\le 1$ and the right hand side is just $G_\xi(W_3)^{-1}$.
\end{proof}
Now, we prove Theorem \ref{thm: estimation of greatest zeros}.
\begin{proof}
Thanks to Lemma \ref{lem: estimation of greatest zeros}, it suffices to prove that
\[\xi_{\rho,L}/\rho \rightarrow 0 \]
as $\rho \rightarrow \infty$.
Put $\xi_\rho := \xi_{\rho ,L}$, $\Xi_\rho := \Xi_{\rho, L}$ for simplicity.
Let $V(t):= {\rm e}^{-Q_L^{\rm m}}(t)$. 
%We note that 
%\[V(t) =  \|V\mathbf{1}_{[t,\infty)}\|_\infty.\]
\[V(t) =  \big\| W^L\mathbf{1}_{[t,\infty)} \big\|_\infty.\]
First, we claim that
\begin{align}
    %\Xi_\rho &:=  \inf\big\{T \ge 0 : \left|\{t \ge 0 : T < t^\rho V(t)\}\right| = 0  \big\},, \label{eq Xi}\\
    \Xi_\rho = \sup\limits_{t \ge 0}t^\rho V(t) = \lim\limits_{t\nearrow \xi_\rho}t^\rho V(t). \label{eq Xi}
    %
    %\xi_\rho &:= \sup\limits_{\varepsilon >0} \big[ \essinf\left\{t \ge 0: t^\rho V(t) > \Xi_\rho - \varepsilon \right\} \big]. \label{eq xi}
    %\xi_\rho &= \inf\left\{t \ge 0: t^\rho V(t) \ge s^\rho V(s)\text{ for all }s\ge0\right\}. \label{eq xi}
\end{align}
In fact, 
%as for the former equality, we first prove
%$\Xi_\rho \le \sup\limits_{t\ge0}t^\rho V(t)$.
%For any $\delta >0$, $S := \{t \ge 0 : \Xi_\rho - \delta < t^\rho W^L(t)\}$ has a positive volume.
%Thus, there exists $s \in S$ such that $|[s, \infty)\cap S| > 0$.
%Since $\{ t \ge s : W^L(t) > s^{-\rho}(\Xi_\rho -\delta) \} \supset [s, \infty)\cap S$ has a positive volume, we have $V(s) \ge s^{-\rho}(\Xi_\rho -\delta)$.
%Hence, we prove the $\Xi_\rho \le \sup\limits_{t\ge0}t^\rho V(t)$.
%We prove the opposite inequality.
we first prove $\Xi_\rho \ge \sup\limits_{t \ge 0}t^\rho V(t)$.
Since for any $t \ge 0$ and $\delta >0$,  by definition, the set $S := \{s \ge t :W^L(s) > V(t) - \delta\}$ has positive 
measure, we see that $\Xi_\rho \ge s^\rho W^L(s) \ge t^\rho V(t) - t^\rho \delta$ for almost all $s \in S$.
It implies the desired inequality.
As $\sup\limits_{t\ge0}t^\rho V(t) \ge \lim\limits_{t \nearrow \xi_\rho}t^\rho V(t)$ is obvious, it suffices to show that $\lim\limits_{t \nearrow \xi_\rho}t^\rho V(t) \ge \Xi_\rho$.
%Suppose there exists $s \ge 0$ such that $\xi_\rho^\rho V(\xi_\rho) < \Xi_\rho$.
%Put $\Xi' := \xi_\rho^\rho V(\xi_\rho)$.
For $\delta > 0$, let 
\[\Omega_\delta := \big\{ t \ge 0 : t^\rho W^L(t) > \Xi_\rho - \delta \big\},\] and define $\eta_\delta := \essinf\Omega_\delta$.
%Since $W \neq 0$, we may assume $\eta_\delta > 0$ (take a sufficiently small $\delta > 0$).
Take arbitrary $\eta_1 > \eta_\delta$.
Then, we see that $(\eta_\delta, \eta_1) \cap \Omega_\delta$ has positive volume and is contained in 
%, and let $\eta_2<\eta_1$ such that $|(\eta_2, \eta_1) \cap \Omega_\delta|>0$.
%Then, we see that $(\eta_2, \eta_1) \cap \Omega_\delta$ is contained in 
%\[\big\{ t \ge \eta_2 :W(t)> (\Xi_\rho -\delta)\eta_1^{-\rho} \big\}.\]
\[\big\{ t \ge \eta_\delta :W(t)> (\Xi_\rho -\delta)\eta_1^{-\rho} \big\}.\]
%Thus, by definition of $V(\eta_2)$, we have $V(\eta_2) \ge (\Xi_\rho - \delta)/\eta_1^\rho$.
Thus, by definition of $V(\eta_\delta)$, we have $V(\eta_\delta) \ge (\Xi_\rho - \delta)/\eta_1^\rho$.
Since $\eta_1$ is arbitrary and $V$ is right continuous, we see that $\Xi_\rho - \delta \le \eta_\delta^\rho V(\eta_\delta)$ for sufficiently small $\delta > 0$.
Since, $\eta_\delta \nearrow \xi_\rho$ as $\delta \rightarrow 0$, we have $\lim\limits_{t \nearrow \xi_\rho}t^\rho V(t) \ge \Xi_\rho$.

%By definition of $\xi_\rho$, for any $0 \le t \le \xi_\rho$, we have
%\[
%\xi_\rho^\rho
%W_L^{\rm m}(\xi_{\rho})
%\ge
%t^\rho W_L^{\rm m}(t)
%\]
Let us assume $\xi_\rho \ge 1$.
By (\ref{eq Xi}), we have
\[\lim\limits_{s \nearrow \xi_\rho} s^\rho V(s) \ge t^\rho V(t)\]
for all $0 \le t \le \xi_\rho$,
or equivalently,
\[  \lim\limits_{s \nearrow \xi_\rho} Q_L^{\rm m}(s) - Q_L^{\rm m}(t) \le \rho \log(\xi_\rho/t). \]
Let $q>0$ be an arbitrarily positive number.
By multiplying the both sides by $qt^{q-1}$ and integrating them from $0$ to $\xi_\rho$, we have
\begin{align}
    \label{inequality 0129}
    \xi_\rho^q\lim\limits_{s \nearrow \xi_\rho} Q_L^{\rm m}(s) - q\int_0^{\xi_\rho} t^{q-1} Q_L^{\rm m}(t) {\rm d}t \le \rho \xi_\rho^q.
\end{align}
Since 
\begin{align*}
\lefteqn{
    \left| q\int_0^{\xi_\rho} t^{q-1}Q_L^{\rm m}(t) {\rm d}t - Q_L^{\rm m}(0) \right| 
    }\\
    &\le  \int_0^{1} \left|Q_L^{\rm m}(t^{1/q}) - Q_L^{\rm m}(0)\right| {\rm d}t+ q\int_1^{\xi_\rho} t^{q-1}|Q_L^{\rm m}(t)| {\rm d}t\\
    &\le \int_0^{1} \left|Q_L^{\rm m}(t^{1/q}) - Q_L^{\rm m}(0)\right| {\rm d}t + q\int_1^{\xi_\rho} |Q_L^{\rm m}(t)| {\rm d}t,
\end{align*}
we see that the second term of (\ref{inequality 0129}) goes to $Q_L^{\rm m}(0)$ when $q\rightarrow 0$ as $Q_L^{\rm m}$ is right continuous and bounded on $[0,1]$.
Thus, by taking limit $q\rightarrow 0$ on (\ref{inequality 0129}), we have
%\[ \lim\limits_{s \nearrow \xi_\rho} Q_L^{\rm m}(s) + \int_0^1 \liminf_{q \rightarrow 0}\{-Q_L^{\rm m}(t^{1/q})\} {\rm d}t \le \rho.\]
\[ \lim\limits_{s \nearrow \xi_\rho} Q_L^{\rm m}(s) - Q_L^{\rm m}(0) \le \rho.\]
Therefore, we have
\[\liminf_{\rho \rightarrow \infty} \lim\limits_{s\nearrow \xi_\rho} \frac{Q_L^{\rm m}(s)}{s} \le \liminf_{\rho \rightarrow \infty} \frac{\rho}{\xi_\rho}.\]
By (\ref{infty for QL}), we see that the left hand side is $+\infty$, thus so is the right hand side.
\end{proof}

\subsection{Boundedness for spherical positive definite functions}
Let $w$ be a spherical function on $\mathbb{R}^d$, and let $L>0$.
Take $W: [0, \infty) \rightarrow \mathbb{R}$ be a function such that $w(\xi) = W(|\xi|)$.
Assume $|\xi|^n{\rm e}^{2L|\xi|}w(\xi) \in L^1$ for all $n \ge 0$.
For $y \in \mathbb{R}^d$ with  $|y| \le  L$, we introduce 
\[\widetilde{\mathcal{E}}_n(y; w):= \sqrt[n]{ \sup\limits_{P \in P_n \setminus \{0\}}\frac{\|{\rm e}^{|y||\cdot|} P\|_{L^2(w)}}{\|P\|_{L^2(w)}}}.\]

First, we show a relation between the zeros of orthogonal polynomials and $\mathcal{E}^+_y(w)$.
\begin{lemma}
\label{lem: for condition B}
Let $w$, $W$ and $L$ be as above.
For $0\le \sigma \le L$, we define the measure $\nu_{\sigma}$ on $\mathbb{R}$ by
%\[ \mu_{\sigma} = |x|^{d-1}{\rm e}^{4\pi \sigma|x|}W(|x|) {\rm d}x .\]
\[ \nu_{\sigma} = |t|^{d-1}{\rm e}^{2\sigma|t|}W(|t|) {\rm d}t .\]
%Let $x_n(t)$ be the greatest zero of the orthogonal polynomial of degree $n$ with respect to the Hilbert space $L^2(\mu_{t,w})$.
%Then, for any $z=x+yi \in \mathbb{C}$ with $|y|<4\pi a$, we have
Then, for $y \in \mathbb{R}^d$ with  $|y| \le L$, we have
\begin{align*}
%\sqrt[n]{ \sup\limits_{P \in P_n \setminus \{0\}}\frac{\|m_z P\|_{L^2(\widehat{u})}}{\|P\|_{L^2(\widehat{u})}}}
\mathcal{E}^+_n(y;w) \le \widetilde{\mathcal{E}}_n(y;w) 
\le  \exp\left(\frac{1}{n}\int_{0}^{|y|} X_{n+2}(\nu_{t}) {\rm d}t \right).
\end{align*}
\end{lemma}
\begin{proof}
The first inequality is obvious in terms of the Cauchy-Schwarz inequality.
We prove the second inequality.
For any real-coefficients
one-variable polynomial $p \in \mathbb{R}[t]$, let
\begin{align*}
    %\phi_P(t) := \int_{\mathbb{R}} {\rm e}^{t\xi} P(\xi)^2 \widehat{u}(\xi) {\rm d}\xi.
    \phi_p(s) := \int_{\mathbb{R}} p(t)^2 {\rm d}\nu_{s}(t).
\end{align*}
By the Cauchy-Schwarz inequality, we have
\begin{align*}
%    \frac{\phi'_p(t)}{\phi_p(t)} &= 4\pi\frac{\int_\mathbb{R} |x| p(x)^2 {\rm d}\mu_{t,w}(x)}{\int_\mathbb{R} p(x)^2 {\rm d}\mu_{t,w}(x)}\\
%    %&\le \left( \frac{\int_\mathbb{R} x^2 P(x)^2 {\rm e}^{tx}\widehat{u}(x){\rm d}x}{\int_\mathbb{R} P(x)^2 {\rm e}^{tx}\widehat{u}(x){\rm d}x} \right)^{1/2}\\
% \nonumber   &\le 4\pi \left( \frac{\int_\mathbb{R} x^2 p(x)^2 {\rm d}\mu_{t,w}(x)}{\int_\mathbb{R} p(x)^2 {\rm d}\mu_{t,w}(x)} \right)^{1/2}.
    \frac{\phi'_p(s)}{\phi_p(s)} &= 2\cdot \frac{\int_\mathbb{R} |t| p(t)^2 {\rm d}\nu_{s}(t)}{\int_\mathbb{R} p(t)^2 {\rm d}\nu_{s}(t)}\\
    %&\le \left( \frac{\int_\mathbb{R} x^2 P(x)^2 {\rm e}^{tx}\widehat{u}(x){\rm d}x}{\int_\mathbb{R} P(x)^2 {\rm e}^{tx}\widehat{u}(x){\rm d}x} \right)^{1/2}\\
 \nonumber   &\le 2\cdot \left( \frac{\int_\mathbb{R} t^2 p(t)^2 {\rm d}\nu_{s}(t)}{\int_\mathbb{R} p(t)^2 {\rm d}\nu_{s}(t)} \right)^{1/2}.
\end{align*}
By Theorem 1 of \cite{Fre86}, we have
\begin{align}
     %\frac{\phi'_p(t)}{\phi_p(t)} \le 4\pi |x_{n+2}(t)|. \label{ineq for key lem}
     \frac{\phi'_p(s)}{\phi_p(s)} \le 2|X_{n+2}(\nu_{s})|. \label{ineq for key lem}
\end{align} 
Therefore, 
by integrating both sides of (\ref{ineq for key lem}), we have
\begin{align}
    %\frac{\phi_p(t)}{\phi_p(0)} \le \exp\left( 4\pi\int_{0}^{t} |x_{n+2}(t)| {\rm d}t\right). \label{formula of ratio}
    \frac{\phi_p(s)}{\phi_p(0)} \le \exp\left( 2\int_{0}^{s} |X_{n+2}(t)| {\rm d}t\right). \label{formula of ratio}
\end{align}  
On the other hand, by a direct computation, we have
\begin{align*}
    \| {\rm e}^{|y||\cdot|} P\|_{L^2(w)}^2 &\le \int_{\mathbb{R}^d} P(\xi)^2 {\rm e}^{2|y|\cdot|\xi|} w(\xi) {\rm d}\xi,\\
    &=\int_{0}^\infty \left( \int_{S^{d-1}} P(ru)^2{\rm d}u\right) r^{d-1}{\rm e}^{2|y|r}W(r){\rm d}r,
\end{align*}
where $S^{d-1}$ is the unit sphere, and ${\rm d}u$ 
is a suitable invariant measure on the sphere $S^{d-1}$.
Put $h(r) := \int\limits_{S^{d-1}} P(ru)^2{\rm d}u$.
Since  $h$ is an even polynomial, we have $h(t) = h(|t|)$ for $t\in\mathbb{R}$.
Thus,
\[\| m_y P\|_{L^2(w)}^2 = \frac{1}{2}\int_\mathbb{R} h(r) {\rm d}\nu_{|y|}(r).\]
Since $h$ is a positive polynomial of degree at most $2n$, it is well
known that there exists
a finite collection of polynomials $h_1, \ldots, h_K$ of degree at most $n$ such that $h=h_1^2 + \dots + h_K^2$ (for example, see \cite{Ben17}).
Thus we have
\begin{align*}
    \frac{\| m_y P\|_{L^2(w)}^2}{\|P\|_{L^2(w)}^2} &\le \frac{\sum\limits_i \phi_{h_i}( |y|)}{\sum\limits_i \phi_{h_i}(0)} \\ 
    &\le \max_{i=1,\ldots,K} \frac{\phi_{h_i}( |y|)}{\phi_{h_i}(0)}.
\end{align*}
Therefore, by (\ref{formula of ratio}), we have the desired inequality.
\end{proof}
We prove the following theorem:
\begin{theorem}
\label{thm: criterion for spherical pdf}
Let $w \in L^1 \cap L^\infty \setminus \{0\}$ satisfying Assumption $\rm \ref{main thm assumption 1}$.
Assume there exists a measurable function $Q:[0,\infty) \rightarrow \mathbb{R}$ such that $w(\xi) = {\rm e}^{-Q(|\xi|)}$ and for all $L > 0$, there exists $B>0$ such that for any sufficiently large $t>0$,
\begin{align}
    &\int_0^{\pi/2} Q_L^{\rm o}(t \cos\theta) {\rm d}\theta < B + \log t.
    \label{bdd for QLos 2}
\end{align}
Then, the function $w$ further satisfies Assumptions
$\rm \ref{main thm assumption 2}$ and 
$\rm \ref{main thm assumption 4}$.
\end{theorem}
\begin{proof}
Since $w$ is a
spherical function, the set $\mathcal{G}(w)$ contains all the orthogonal matrices, thus, $\mathcal{G}(u)$ generates the space of matrices, namely, \ref{main thm assumption 4} holds.
Let us prove Assumption \ref{main thm assumption 2}.
First, we prove  
\[
\limsup\limits_{n\to\infty} \widetilde{\mathcal{E}}_n(y; w) \le  1.
\]
Thanks to Lemma \ref{lem: estimation of greatest zeros}, it suffices 
to prove that $R_L(t) := Q(t) - (d-1)\log t - Lt$ satisfies the conditions (\ref{infty for QL}) and (\ref{bdd for QLos 2}) for all $L > 0$.
The condition (\ref{infty for QL}) is immediate since $w$ satisfies Assumption \ref{main thm assumption 1}.
Regarding condition (\ref{bdd for QLos 2}), since 
\[ R_L^{\rm o}(t) \le Q_{L+d-1}^{\rm o}(t) + (d-1)(t - \log{t})\mathbf{1}_{[0,1]}(t),\]
we have
\begin{align*}
    &\int_{0}^{2\pi} R_L^{\rm o}(t\cos \theta) {\rm d}\theta \\
    &\le \int_{0}^{2\pi} Q_{L+d-1}^{\rm o}(t\cos \theta) {\rm d}\theta + (d-1)\int_0^{\min(1,t)} (s - \log{s})\frac{{\rm d}s}{\sqrt{t^2 -s^2}}.
\end{align*}
Since the second term is constant for sufficiently large $t$, we see that $R_L$ satisfies condition (\ref{bdd for QLos 2}) as $Q_{L+d-1}$ satisfies (\ref{bdd for QLos 2}).
Thus, 
$\limsup\limits_{n\to \infty} \mathcal{E}^+_n(y; w) \le 1$ 
by the left inequality of Lemma \ref{lem: for condition B}.
We prove $\limsup_{n} \mathcal{E}^-_n(y; w) \le 1$.
Let $w_0(\xi) := {\rm e}^{-2|y|\cdot|\xi|}w(\xi)$.
Let $P \in P_n$.
By the Cauchy--Schwarz inequality, we have
\[\|m_y P\|_{L^2(w)} \ge \|P {\rm e}^{-|y||\cdot|}\|_{L^2(w)} = \|P\|_{L^2(w_0)}.  \]
Then, we have
\begin{align*}
    \frac{\|P\|_{L^2(w)}}{\|m_y P\|_{L^2(w)}} \le \frac{\|{\rm e}^{|y||\cdot|}P\|_{L^2(w_0)}}{\|P\|_{L^2(w_0)}} \le \widetilde{\mathcal{E}}_n(y; w_0).
\end{align*}
Thus, we may use Lemma \ref{lem: for condition B} and Theorem \ref{thm: estimation of greatest zeros} as in the above argument, and we obtain
\[\limsup_{n\rightarrow \infty} \mathcal{E}^-_n(y; w)\le \limsup_{n\rightarrow\infty} \widetilde{\mathcal{E}}_n(y; w_0) \le 1.\]
Therefore, Assumptions \ref{main thm assumption 1}, \ref{main thm assumption 2}, and \ref{main thm assumption 4} hold.
\end{proof}
As a result, we obtain a simple sufficient condition for $w$ 
so as to satisfy Assumption \ref{main thm assumption 2}:
\begin{corollary}
\label{cor: super easy criterion}
Let $w \in L^1 \cap L^\infty \setminus \{0\}$ be 
a nonnegative spherical function.
Assume that there exists
a locally $L^1$-function $Q:[0,\infty) \rightarrow \mathbb{R}$ such that $w(\xi) = {\rm e}^{-Q(|\xi|)}$.
We further assume that there exists $c \ge 0$ such that  $Q(t) + ct$ is non-decreasing for sufficiently large $t \ge 0 $ and
that $Q(t+R)-Q(t) \rightarrow \infty$ as $t\rightarrow \infty$ for some $R>0$.
Then, the function $w$ satisfies Assumptions
$\rm \ref{main thm assumption 1}$,
$\rm \ref{main thm assumption 2}$, and 
$\rm \ref{main thm assumption 4}$.
\end{corollary}
\begin{proof}
We deduce from the assumtions of the corollary
that $Q$ is bounded any interval $[a,b]$ as long as $b>a \gg 1$.
Assumption \ref{main thm assumption 1} immediately follows from the condition that $Q(t+R) - Q(t) \rightarrow \infty$ as $t \rightarrow \infty$.
We will prove that for any $L>0$,  $Q_L^{\rm o}(t)$ is bounded for any sufficiently large $t>0$.
We easily see that the boundedness of $Q_L^{\rm o}$ implies the condition (\ref{bdd for QLos 2}) in Theorem \ref{thm: estimation of greatest zeros}, and thus, $w$ satisfies the condition \ref{main thm assumption 2} and \ref{main thm assumption 4} by Theorem \ref{thm: criterion for spherical pdf}.

First, we claim that we may assume $Q$ is non-decreasing.
In fact, Let $R'>0$ be a positive number such that $Q_{-c}(t) = Q(t) + ct$ is non-decreasing for $t \ge R'$.
We define a non-decreasing function $\tilde{Q}$ by 
\[\tilde{Q}(t) := \big(Q_{-c}(t) -Q_{-c}(R') \big)\mathbf{1}_{[R',\infty)}.\]
Note that we immediately see that $\tilde{Q}(t+R) - \tilde{Q}(t) \rightarrow \infty$ as $t \rightarrow \infty$.
Then, we have
\begin{align*}
    Q_L &= Q_{-c}\mathbf{1}_{[0,R')} + Q_{-c}(R')\mathbf{1}_{[R',\infty)} + \tilde{Q}_{L+c},\\
    \left(Q_{-c}\mathbf{1}_{[0,R')}\right)^{\rm o}% &\le \left(Q\mathbf{1}_{[0,R')}\right)^{\rm o} + cR' \mathbf{1}_{[0,R')}\\
    &\le \left(|Q| +  |Q^{\rm m}(0)| + cR'\right)\mathbf{1}_{[0,R')}.
\end{align*}
Thus, we have
\[ Q_L^{\rm o} \le  \left(|Q| +  |Q^{\rm m}(0)| + cR'\right)\mathbf{1}_{[0,R')} + \tilde{Q}_{L+c}.\]
%Then, we have
%\[Q_L = Q_{-c}\mathbf{1}_{[0,R')} + Q_{-c}(R')\mathbf{1}_{[R',\infty)} + \tilde{Q}_{L+c},\]
Since the first term does not affect the condition (\ref{bdd for QLos 2}), we may replace $Q(t)$ with the non-decreasing function $\tilde{Q}$ to prove the condition (\ref{bdd for QLos 2}).

Now, we assume $Q$ is non-decreasing.
Fix an arbitrary sufficiently large number  $s \ge 0$ satisfying $Q(t+R) - Q(t) > LR$ for any $t \ge s$.
Since for any $t \in [s, s+R)$, we have $Q_L(t+nR) - Q_L(t) >0$ for all positive integer $n\ge1$.
Thus, for any arbitrary $\varepsilon > 0$ there exists $t_\varepsilon \in [s, s+R)$ such that  $Q_L^{\rm m}(s) + \varepsilon  > Q_L(t_\varepsilon)$.
Then, by definition of $Q_L^{\rm o}$
and by the fact that
$Q$ is non-decreasing, we see that
\[ Q_L^{\rm o}(s) - \varepsilon \le Q_L(s) - Q_L(t_\varepsilon) \le Q(s) - Q(t_\varepsilon) +LR \le L R .\]
Since  $\varepsilon$ is arbitrary, we have $Q_L^{\rm o}(s) \le LR$, which means that for any sufficently large $s>0$, $Q_L^{\rm o}(s)$ bounded.
\end{proof}

\subsection{Boundedness for positive definite functions of tensor products of even functions}
\label{sec: tensor prod}
In this subsection, we discuss the case where the non-negative function $w$ is a tensor product of even functions on $\mathbb{R}$.
First, we prove the following lemma:
\begin{lemma}
\label{lem: for tensor prod}
Let $w \in L^1 \cap L^\infty \setminus \{0\}$ be a nonnegative measurable function.
Assume there exists $w_1,\dots, w_d : \mathbb{R} \rightarrow \mathbb{R}$ such that each $w_i$ satisfies Assumption \ref{main thm assumption 1} and $w(\xi_1,\dots, \xi_d) = w_1(\xi_1)\cdots w_d(\xi_d)$.
If each $w_i$ satisfies Assumption \ref{main thm assumption 2}, namely, there exists $B_i > 0$ such that for all $y \in \mathbb{R}$, 
\[
    \limsup_{n\rightarrow \infty}\mathcal{E}^{\pm}_n(y; w_i) < B_i,
\]
then $w$ also satisfies Assumption \ref{main thm assumption 2}.
\end{lemma}
\begin{proof}
Fix $y = (y_i)_{i=1}^d \in \mathbb{R}$.
Then, for any $P \in P_n$, we see that
\begin{align*}
\frac{\|m_y P\|_{L^2(w)}^2}{\|P\|_{L^2(w)}^2} = \prod_{i=1}^d  \frac{\int_{\mathbb{R}} {\rm e}^{y_i\xi}P_i(\xi) w_i(\xi){\rm d}\xi}{\int_{\mathbb{R}} P_i(\xi) w_i(\xi){\rm d}\xi},
\end{align*}
where $P_i$ is a one-variable polynomial of degree at most $2n$.
\begin{align*}
P_i(\xi) := &\int_{\mathbb{R}^{d-1}} |P(\xi_1,\dots,\xi_{i-1},\xi,\xi_{i+1}, \dots,\xi_d)|^2 w_1(\xi_1)\cdots w_{i-1}(\xi_{i-1})\\
&~~~~~~\times {\rm e}^{y_{i+1}\xi_{i+1}}w_{i+1}(\xi_{i+1}) \cdots {\rm e}^{y_{d}\xi_{d}}w_d(\xi_d) {\rm d}\xi_1\cdots {\rm d}\xi_{i-1}{\rm d}\xi_{i+1}\cdots {\rm d}\xi_{d}
\end{align*}
Since each $P_i$ is a positive polynomial, by the Hilbert's 17th problem (see, for example \cite{Ben17}), there exists a finite collection of one-variable polynomials $Q_{i,1},\dots, Q_{i, K_i}$ of degree at most $n$ such that $P_i = Q_{i,1}^2 + \dots + Q_{i, K_i}^2$.
Thus, we have
\[
\frac{\int_{\mathbb{R}} {\rm e}^{y_i\xi}P_i(\xi) w_i(\xi){\rm d}\xi}{\int_{\mathbb{R}} P_i(\xi) w_i(\xi){\rm d}\xi}
\le \max_{j=1,\dots, K_i} \frac{\int_{\mathbb{R}} {\rm e}^{y_i\xi}Q_{i,j}(\xi)^2 w_i(\xi){\rm d}\xi}{\int_{\mathbb{R}} Q_{i,j}(\xi)^2 w_i(\xi){\rm d}\xi}
\le \mathcal{E}^+_n(y_i; w_i)^{2n}.
\]
Therefore, we see that 
\[\limsup_n\mathcal{E}_n^+(y, w) \le B_1\cdots B_d.\]
We also obtain 
\[\limsup_n\mathcal{E}_n^-(y, w) \le B_1\cdots B_d.\]
in the same manner as above.
\end{proof}
Then, we obtain a similar theorem to Theorem \ref{thm: criterion for spherical pdf}:
\begin{theorem}
\label{thm: criterion for tensor product pdf}
Let $w \in L^1 \cap L^\infty \setminus \{0\}$ satisfying Assumption $\rm \ref{main thm assumption 1}$.
Assume there exists a measurable function $Q_1,\dots, Q_d:[0,\infty) \rightarrow \mathbb{R}$ such that 
\[w(\xi_1, \dots, \xi_d) = \prod_{i=1}^d{\rm e}^{-Q_i(|\xi_i|)},\]
for all $i,j \in \{1,\dots, d\}$, $|Q_i - Q_j|$ is bounded, 
and for each $i=1,\dots, d$ and any $L>0$, there exists $B_i>0$ such that for any sufficiently large $t>0$,
\begin{align}
    &\int_0^{\pi/2} (Q_i)_L^{\rm o}(t \cos\theta) {\rm d}\theta < B_i + \log t.
\end{align}
Then, the function $w$ further satisfies Assumptions
$\rm \ref{main thm assumption 2}$ and 
$\rm \ref{main thm assumption 4}$.
\end{theorem}
\begin{proof}
By Lemma \ref{lem: for tensor prod} with Theorem \ref{thm: criterion for spherical pdf}, it suffices to show that $w$ satisfies Assumption \ref{main thm assumption 4}.
The boundedness of $|Q_i - Q_j|$ implies $\mathcal{G}(w)$ contains all the symmetric group $\mathfrak{S}_d \subset {\rm GL}_d(\mathbb{R}^d)$.
Since each $Q_i$ is an even function, $\mathcal{G}(w)$ also contains $C_2^d \subset {\rm GL}_d(\mathbb{R}^d)$, the group of diagonal matrices
$A$ satisfying $A^2=I$.
Since the group generated by $\mathfrak{S}_d$ and $C_2^d$ generates ${\rm M}_d(\mathbb{R})$ as a linear space over $\mathbb{R}$, we obtain \ref{main thm assumption 4}.
\end{proof}
We also have a similar corollary to Corollary \ref{cor: super easy criterion}:
\begin{corollary}
\label{cor: super easy criterion 2}
Let $w \in L^1 \cap L^\infty \setminus \{0\}$ satisfying Assumption $\rm \ref{main thm assumption 1}$.
Assume there exists a measurable function $Q_1,\dots, Q_d:[0,\infty) \rightarrow \mathbb{R}$ such that 
\[w(\xi_1, \dots, \xi_d) = \prod_{i=1}^d{\rm e}^{-Q_i(|\xi_i|)},\]
for all $i,j \in \{1,\dots, d\}$, $|Q_i - Q_j|$ is bounded.
We further assume that for each $i = 1, \dots, d$, there exists $c_i$ such that $Q_i(t) + c_it$ is non-decreasing for sufficiently large $t \ge 0$ and $Q_i(t+R_i) - Q(t) \rightarrow \infty$ as $t\rightarrow \infty$ for some $R_i>0$.
Then, the function $w$ satisfies Assumption \ref{main thm assumption 1}, \ref{main thm assumption 2}, and \ref{main thm assumption 4}.
\end{corollary}
\begin{proof}
By Lemma \ref{lem: for tensor prod} with Corollary \ref{cor: super easy criterion}, it suffices to show that $w$ satisfies Assumption \ref{main thm assumption 4}, but its proof is the same as in that of Theorem \ref{thm: criterion for tensor product pdf}.
\end{proof}
For example, $w(\xi) = {\rm e}^{-|\xi|_p^p}$ ($p>1$) satisfies the condition in Corollary \ref{cor: super easy criterion 2}, where $|\xi|_p:= (|\xi_1|^p + \dots + |\xi_d|^p)^{1/p}$ for $\xi = (\xi_i)_{i=1}^d \in \mathbb{R}^d$.

\subsection{An example of entire functions of infinite order}
\label{subsec: remark}
Even in one dimensional case, our result contains an essentially new contribution.
In the case of $d=1$ and $w = \mathbf{1}_{[-1/2, 1/2]}$ , there are several works treated the boundedness of composition operators in RKHS \cite{CCG, CC93}.
Their method based on finiteness of the order of the entire function $\widehat{w}$.
%P\'olya's theorem \cite{Po26}, which states if $f$ and $g$ are entire function and $f\circ g$ is of finite order, then $f$ is of finite order and $g$ is a polynomial, or $f$ is of order $0$ and $g$ is not polynomial.
As the RKHS associated to the positive definite function $\widehat{w}$ is composed of entire functions of order $1$, they \cite{CCG, CC93} directly apply P\'olya's theorem \cite{Po26} with some careful analysis, and deduce affiness of original maps inducing bounded composition operators.
We may apply this method  without using ours discribed in this paper if the RKHS is composed of entire functions of finite order.
However, there exists an example of RKHSs containing entire functions of infinite order, but only affine maps can induce bounded composition operators on the RKHS based on our framework.
%In this case, we cannot use P\'olya's theorem, thus our method essentially improves the previous works.

Let us explain the example.
We define
\[w(\xi) = \sum\limits_{n=-\infty}^\infty \frac{\mathbf{1}_{[-1/2+n, 1/2 +n)}(\xi)}{|n|!}.\]
Then, we easily see that
\begin{align*}
    \widehat{w}(z) &= \frac{\sin(\pi z)}{\pi z}\sum\limits_{n = -\infty}^\infty \frac{{\rm e}^{2\pi i nz}}{|n|!} \\
    &=\big({\rm e}^{{\rm e}^{2\pi i z}}+{\rm e}^{{\rm e}^{-2\pi i z}} -1 \big)\cdot \frac{\sin(\pi z)}{\pi z}
\end{align*}
The entire function $\widehat{w}(z)$ is of infinite order since $|\widehat{w}(iy)| = O({\rm e}^{{\rm e}^{2\pi y}})$.
Let $Q := -\log w$.
Then, we immediately see that $Q$ is non-decreasing and $Q(t+1) - Q(t) \rightarrow \infty$ as $t\rightarrow \infty$ since $Q(t+1) - Q(t) = \log (n+1)$ for $t\in [-1/2 + n , 1/2 +n)$.
Thus, thanks to Corollary \ref{cor: super easy criterion} and Theorem \ref{main thm}, if a composition operator on the RKHS associated with the positive definite function $\widehat{w}$ is bounded, then the original map is an affine map.

%If we imposed differentiability on $Q$, the above proof could be much easier.
%However, since there exists an non-decreasing continuous function such that it is differentiable almost everywhere but any derivative is zero, our results provide a nontrivial improvement.

%%\subsection{An example for one dimensional case}
%%Here, we assume $d=1$.
%%In this subsection, we provide an example of $w \in L^1 \cap L^\infty$ such that $w$ is in the form of $w(\xi) = {\rm e}^{-Q(|\xi|)}$, and $\widehat{w}$ is an entire function of infinite order.
%%
%%For $\sigma >0$, let 
%%\[ u_\sigma(z) := {\rm e}^{-\sigma \cosh(z)}. \]
%%Then, $u$ is a positive definite function.
%%In fact, put $k(x,y) = u_\sigma(x-y)$. 
%%Since 
%%\[\cos(x) = \sum\limits_{n=0}^\infty \frac{x^{2n}}{(2n)!}, \]
%%we have
%%\[u_\sigma(x-y) = \prod_{n=0}^\infty \exp\left\{-\frac{(x-y)^{2n}\sigma}{(2n)!}\right\}.\]
%%Since each factor $\exp\{ (x-y)^{2n}\sigma/(2n)! \}$ is positive definite (

%%%%%%%%%%%%%%%%%%%%%%%%%%%%%%%%%%
%%%%%%%%%%%%%%%%%%%%%%%%%%%%%%%%%%
\newpage

%\bibliographystyle{plain}
%\bibliography{references}

\end{document}